\numberwithin{equation}{section}
\theoremstyle{plain}
\newtheorem{theorem}{Theorem}[section]        
\newtheorem{corollary}[theorem]{Corollary}             
\newtheorem{lemma}[theorem]{Lemma}                 
\newtheorem{proposition}[theorem]{Proposition}
\theoremstyle{definition}
\newtheorem{definition}[theorem]{Definition}                
\theoremstyle{remark}
\newtheorem{remark}[theorem]{Remark}
\DeclareMathOperator{\Div}{div}
\DeclareMathOperator{\supp}{supp}
\newcommand{\bR}{\mathbb{R}}
\newcommand{\W}{\mathbf{W}}
\newcommand{\BS}{\mathcal{K}}
\newcommand{\st}{ \hspace{0.075cm} \hat{\otimes} \hspace{0.075cm}  }
\def\command@factory#1{%
\expandafter\def\csname b#1\endcsname{\mathbf{#1}}
\expandafter\def\csname cl#1\endcsname{\mathcal{#1}}
}
\newcommand{\tmop}[1]{\ensuremath{\operatorname{#1}}}
\newcommand{\cdummy}{\cdot}
\title{
	On a rough perturbation of the Navier-Stokes system \\ and its vorticity formulation 
}
\author{Martina Hofmanov\'a\thanks{Fakult\"at f\"ur Mathematik, Universit\"at Bielefeld, Postfach 100131, D-33501 Bielefeld, Financial support by the DFG via Research Unit FOR 2402 is gratefully acknowledged.}, \;James-Michael Leahy\thanks{School of Electrical and Computer
Engineering, Cornell University, N.Y., USA}, Torstein Nilssen\thanks{Institute of Mathematics, Technical University of Berlin, Germany, Financial support by the DFG via Research Unit FOR 2402 is gratefully acknowledged.}}
\date{\today}
\begin{document}
\maketitle

\begin{abstract}
We introduce a rough perturbation of the Navier-Stokes system and justify its physical relevance from balance of momentum and conservation of circulation in the inviscid limit.
We present a framework for a well-posedness analysis of the  system. In particular, we define an intrinsic notion of solution based on ideas from the rough path theory and study the system in an equivalent vorticity formulation. In two space dimensions, we prove that well-posedness and enstrophy balance holds. Moreover, we derive rough path continuity of the equation, which yields a Wong-Zakai result for Brownian driving paths, and  show that for a large class of driving signals, the system generates a continuous random dynamical system. In dimension three, the noise is not enstrophy balanced, and we establish the existence of local in time solutions.

\bigskip

MSC Classification Numbers: 60H15, 76D05, 47J30, 60H05, 35A15.

Key words: Rough paths, Stochastic PDEs, Navier-Stokes system, variational method.

\end{abstract}

\tableofcontents

\section{Introduction}
\label{s:intro}

\subsection{General motivation}

In this paper, we investigate  well-posedness and stability of a rough-path perturbation of the Navier-Stokes system. The deterministic Navier-Stokes equations are a system of non-linear partial differential equations that govern the velocity field $u$ and pressure $p$ of an incompressible homogeneous viscous fluid moving in some domain $\clD\subseteq \bR^d$:
\begin{equation}\label{eq:NS}
\begin{aligned}
\partial_t u+(u\cdot \nabla)u&=-\nabla p+\vartheta\Delta u,  \quad (t,x)\in (0, T) \times  \clD,\\\
\nabla \cdot u&=0, \quad 
u|_{t=0} =u_0,
\end{aligned}
\end{equation}
where
$\vartheta$ is the kinematic viscosity, $u_0$ is a given initial velocity and additional boundary conditions are needed depending on the domain $\clD$.  The system \eqref{eq:NS} can be derived from the basic physical principles by assuming conservation of mass and momentum in integral form, homogeneity, incompressibility (or conservation of kinetic energy) and viscous stress forces, and using Reynold's transport theorem. At least formally, the time-dependent vector field $u$ generates a time-homogeneous two-parameter flow $\eta_{s,t}$ 
on $\clD$:
$$
\dot{\eta}_{s,t}(x) = u_t(\eta_{s,t}(x)), \quad \eta_{s,s}(x)=x, \quad s \leq t, \; \, x \in \clD.
$$
That is, a particle initially at a point $x \in \clD$ at  time $s$ moves to the point $\eta_{s,t}(x)\in \clD$ at time $t$ in such way that at each $t' \in (s,t)$, the instantaneous velocity is given by $u_{t'}(\eta_{s,t'}(x))$.

In practice, solutions of the  Navier-Stokes system are numerically approximated. Due to limited computational resources, there are subgrid dynamics or fast modes that can not be resolved by a direct numerical simulation. The non-linear term $(u\cdot \nabla) u$ mixes the subgrid and grid scales. As such, accurate forecasts of turbulent fluid regimes are only possible at the moment if substantial computational resources are invested, which is not a luxury practitioners can afford in real-time applications where data is to be assimilated.  
Lewis Fry Richardson has said: ``Big whirls have little whirls that feed on their velocity, and little whirls have lesser whirls and so on to viscosity." Here, Richardson is describing the direct energy cascade in 3D turbulence, in which energy is transferred from larger eddies (modes) to smaller eddies to the minimum scale at which the energy is dissipated by viscosity. 
In fluid dynamics and turbulence modeling especially, the search for  tractable models for subgrid-scale dynamics that are closable, parameterizable, and preserve physical laws is ongoing (see, e.g., \cite{majda2003systematic} for one such example).  While all parameterization schemes are designed to improve the quality of forecasts,  stochastic parameterization schemes have an additional advantage in that they provide a natural mechanism to quantify uncertainty in prediction. 

An important property of a parameterized dynamical system is the stability of the  dynamics with respect to the parameters.  In order to define stability, one must specify a set of input parameters and an output set (of the dynamical system), and  endow the corresponding sets with a topology. For a parameterized stochastic dynamical  system, there are two main types of stability, which we will briefly explain.
Let  $S$  denote the  output of a parameterized stochastic dynamical system, which takes values in a space $\clN$ and depends on time $t\in \bR_+$, space $x\in \clM$, a set of parameters $\Theta$, and a sample space outcome $\omega\in \Omega$ (where $(\Omega,\clF,\bP)$ is a probability space).  Probabilistic stability usually means continuity of the map
$$
S :\Theta \rightarrow O\subset L^0(\Omega\times \bR_+\times \clM;\clN),
$$
where  $O$ is a metric space contained in $L^0(\Omega\times \bR_+\times \clM ;\clN)$, the space of measurable random variables from $\Omega\times \bR_+\times \clM$ to $\clN$.  Pathwise stability, on the other hand, means  continuity of the map
$$
S :\Omega\times \Theta \rightarrow \tilde{O}\subset L^0( \bR_+\times \clM ;\clN),
$$
where  $\Omega$ shall be  endowed with certain topology.

To study stability in this sense, a solution map needs to be constructed for each $\omega$; in other words, $S(\omega)$ is the outcome of a deterministic dynamical system.  If $(\Omega,\clF,\bP)$ is  the canonical probability space  for a multi-dimensional Wiener process and the model contains a stochastic integral, then, in general, there is no separable Banach space contained in the space of continuous functions $\Omega$ that contains the trajectories of the Wiener process almost-surely and for which the solution map $S$ is pathwise stable. The key idea of rough paths is to consider an  enriched set $\boldsymbol{\Omega}$ of rough paths (i.e., an appropriate feature set for the Brownian paths), which contains additional information beyond the path itself, namely the iterated integrals of the path $\omega$, which one can construct by probabilistic methods. 
The map $S$ is then factorized as follows:
$$
\begin{tikzcd}
\Omega\times \Theta \arrow{dr}{\Phi\times \textnormal{id}} \arrow{rr}{S} & & L^0( \bR_+\times \clM ;\clN),\\
& \boldsymbol{\Omega}\times \Theta \arrow{ur}{\tilde{S}} &
\end{tikzcd}
$$
where  $\Phi$ is a  measurable  feature map which  `lifts' the path to a rough path and  $\tilde{S}$ is a continuous (Lipschitz in some cases) `path-by-path' solution map. The construction of $\tilde{S}$  allows Brownian paths to be treated as a parameter belonging to the set of rough paths, which puts the stochastic and deterministic parameterization schemes on equal footing as far as stability is concerned.

As mentioned above, stochastic parameterization schemes provide a natural mechanism to forecast uncertainty. That is,  an ensemble of solutions can be generated. By constructing a path-by-path solution map $\tilde{S}$, any element of the enriched space $\boldsymbol{\Omega}$ is an admissible driving path. For example, non-Markovian processes such as fractional Brownian motion have rough path lifts to $\boldsymbol{\Omega}$. Thus, a highly flexible stochastic modeling framework is permissible once the pathwise solution map $\tilde{S}$ is constructed. Recent work on the statistics of 2D fluid turbulence suggests that the subgrid (or fast scales) dynamics of fluids are non-Markovian and non-Gaussian \cite{lilly2016fractional, faranda2014modelling}. In fact, even piecewise linearly interpolated data from observations could serve as a driving path.

The system of rough partial differential equations we consider in this paper arise from perturbing the advecting vector field in \eqref{eq:NS} by a time-dependent vector field that is rough in time and smooth in space. More precisely, we re-write \eqref{eq:NS} in covariant form, and then perturb the advecting vector field. The perturbation can be understood as a parameterization of the subgrid dynamics or high-modes of the fluid velocity field.  Therefore, the well-posedness and stability results we establish clear the way for the  development of a rich and robust modeling framework for fluids.

\subsection{Derivation  of the equation} \label{sec:Derivation}

In this section we present a heuristic derivation of our main equation and discuss its physical relevance. However, this is not essential for reading and understanding our results in the remainder of the paper and, as such, may be skipped during the first reading. 

The  Navier-Stokes system \eqref{eq:NS} is the differential form of the momentum balance  principle under the additional assumption that the fluid is homogeneous (constant density) and incompressible. The momentum balance in integral-form and in standard coordinates reads
$$
\frac{d}{dt}\int_{\eta_{s,t}(W)}u_t^i\rho dx = \int_{\eta_{s,t}(W)}\nu \Delta u_t^i dx - \int_{\eta_{s,t}(W)}p n^i dA = \int_{\eta_{s,t}(W)}(\nu \Delta u_t^i -\partial_{x^i} p_t) dx,  
$$
for all nice regions $W\subset \clD$, where we have written the coordinates to emphasize the fact that the momentum balance principle as stated is coordinate dependent.

It is a worthwhile endeavor to derive an equation for the momentum balance that is invariant under a change of the coordinate system (see, e.g., for  \cite{tao2016finite} for motivation and \cite{abraham2012manifolds} or  \cite{taylor2013partial} for more details). The language of differential geometry provides the tools to do so, while also providing the natural generalization of the fluid equations to a  manifold $M$. 

One usually considers the fluid velocity $u$ in \eqref{eq:NS} as a vector field, which we write as $u = u^j \frac{\partial}{ \partial x_j}$. where $(x,U)$ is a local coordinate system of $M$ and $\frac{\partial}{ \partial x_j}$ is the local basis of the tangent bundle $TM$. Here and for the rest of the paper we use the convention of summation over repeated indices. In the inviscid case $\vartheta = 0$, that is, for Euler's equations, the momentum balance principle
 implies conservation of circulation by Reynold's transport theorem: 
\begin{equation} \label{eq:CirculationConservationVectorField}
\oint_{\eta_{s,t}(C)} u_t  = \oint_{C} u_0  
\end{equation}
for any $s,t$ and any contour $C$. The  reader will notice the ambiguity of the above integrals--the contour is a 1-dimensional subset of $M$ and as such one should really understand $u$ as a 1-form. One can obtain a 1-form from $u$ on a Riemannian manifold $(M,g)$ by setting $u^{\flat} := g_{ij} u^j d x^j$, where $dx^j$ is a local basis of the cotangent bundle $T^*M$ and $g_{ij}$ is the metric tensor in local coordinates. To simplify our discussion below, we assume the manifold is flat $g_{ij}=\delta_{ij}$ in what follows. Thus, the contour integrals above can be written as line integrals of the one-form $u^{\flat}$:
\begin{equation} \label{eq:CirculationConservation}
\oint_{\eta_{s,t}(C)} u_t^{\flat}  = \oint_{C} u_0^{\flat} .
\end{equation}

To obtain a coordinate-free expression for $u^{\flat}$, we first consider the Navier-Stokes equation in standard coordinates:
$$
\partial_t u^i+u^j \frac{\partial }{\partial x_j} u^i=-\frac{\partial }{\partial x_i} p+\vartheta\Delta u^i, \;\;i \in \{1,\ldots,d\}.
$$
Adding $u^j\frac{\partial }{\partial x_i} u^j$ to both-sides of the equation, we get 
$$
\partial_t u^i + u^j\frac{\partial }{\partial x_j} u^i + u^j \frac{\partial }{\partial x_i} u^j=-\frac{\partial }{\partial x_i}\tilde{p}+\vartheta\Delta u^i,
$$
where $\tilde{p}= p-\frac{1}{2}|u|^2$. The reason for adding this term to both sides is that the last two terms on the left-hand side of the equality can be identified with the Lie derivative of the one-form $u^{\flat}$ along $u$:
\begin{align*}
\clL_{u_t} u^{\flat}_t & = \frac{d}{d\tau} (\eta_{t,\tau})^* u^{\flat}_t|_{\tau=t}= u^j_t \frac{\partial }{\partial x_j} u^i_t   dx^i +  u^j_t \frac{\partial }{\partial x_i} u^j_t  dx^i,
\end{align*}
where the latter equality  is a direct consequence of  Cartan's magic formula.
Let $\bd$ be the exterior differential operator and $\delta$ the co-differential operator. The operator $\bd\delta +\delta \bd$ is called the Hodge-Laplacian, and is equal to the (Levi-Civita) connection Laplacian on flat space by the Weitzenb\"ock  identity. In particular, 
$$
\Delta u =((\bd\delta +\delta \bd)u^{\flat})^{\sharp}\quad  \Leftrightarrow \quad \Delta u^i=((\bd\delta +\delta \bd)u^{\flat})_i, \quad i \in \{1,\ldots d\},
$$
where $\sharp$ denotes the inverse of the $\flat$ operator. 
Putting it all together,  the covariant form of the Navier-Stokes equation is given by
\begin{equation}\label{eq:covformNS}
\partial_t u^{\flat}+\clL_{u}u^{\flat}=-\bd\tilde{p}+\vartheta \delta \bd u^{\flat}, \quad \delta u^{\flat}=0,
\end{equation}
where the divergence-free condition is written in terms of the codifferential.
The term $\clL_{u}u^{\flat}$ is the non-linear Lie-advection of the one-form $u^{\flat}$ by the vector-field $u$ whose associated flow generates the integral curves $\eta$. As an application of Reynold's transport theorem,  we find
$$
\frac{d}{dt}\oint_{\eta_{s,t}(C)} u_t^{\flat} = \oint_{\eta_{s,t}(C)} (\partial_t+\clL_{u_t})u_t^{\flat}=\oint_{\eta_{s,t}(C)}\left( \bd \tilde{p}+\vartheta(\bd+\delta)^2 u^{\flat}\right),
$$
which, upon applying  Stokes' theorem, gives a convenient proof of circulation conservation when $\vartheta =0$.

 In practice, one must approximate solutions of \eqref{eq:covformNS}, and hence  ignore the high modes of the solution. That is, one  can only compute  solutions of
\begin{equation}
\partial_t u^{L,\flat}+\clL_{u^L}u^{L,\flat}=-\bd\tilde{p}+\vartheta \delta \bd u^{L,\flat}, \quad \delta u^{L,\flat}=0,
\end{equation}
where $u^{L}$ has only modes up to a certain order. A  way of improving approximations on a limited computational budget is to  parameterize  the high-modes $u^{H}$  of $u$ by a vector field $\tilde{u}^{H}$ and compute
\begin{equation}
\partial_t u^{L,\flat}+\clL_{u^L+\tilde{u}^H}u^{L,\flat}=-\bd\tilde{p}+\vartheta \delta \bd u^{L,\flat}, \quad \delta u^{L,\flat}=0.
\end{equation}
One possible choice of a parameterization of $u^{H}$ is given by $\tilde{u}^{H}=\sigma_k \dot{B}_t^k$, where $\sigma_{k}:M\to\bR^{d}$, $k\in \{1,\dots, K\}$, are sufficiently regular divergence-free vector fields and $B^{k}:\bR_{+}\to \bR$ are independent Brownian motions. The idea is that $u$ is approximated by a stochastic ensemble of solutions of $u^{L}$. In fact, such an equation can be derived from the theory of stochastic homogenization combined with a variational principle, and we refer the reader to \cite{cotter2017stochastic, cotter2019numerically} for more details about the derivation and for verifiable proof that the parameterization is flexible enough to capture the high-modes of $u$. Motivated by the practical success of this approach, we seek to develop a framework for more flexible parameterizations, where instead of Brownian motions $B^k$, one considers rough paths $z^k$, and to develop pathwise stability of the Brownian case, at least in dimension two.

Motivated by this problem, we perturb the advecting vector field $u$ in $\clL_{u}u^{\flat} $  in \eqref{eq:covformNS} by a random vector field of the form $\sigma_k \dot{z}^k$, where $\sigma_{k}:M\to\bR^{d}$, $k\in \{1,\dots, K\}$, are sufficiently regular divergence-free vector fields and $z^{k}:\bR_{+}\to \bR$ are driving paths, which shall eventually possess only a limited regularity.  That is, we replace $\clL_{u}u^{\flat}$ with $ \clL_{u+\sigma_k \dot{z}^k}u^{\flat}$ and consider 
\begin{equation} \label{eq:MainEquationNoCoordinates}
\partial_t u^{\flat}+\clL_{u+\sigma_k\dot{z}^{k}}u^{\flat}=\partial_t u^{\flat}+\clL_{u}u^{\flat}+\clL_{\sigma_k}u^{\flat} \dot{z}^k_t =-\bd\tilde{p}+\vartheta\delta \bd u^{\flat}.
\end{equation}
The vector field $u+\sigma_k \dot{z}^k$ generates the two-parameter flow $\tilde{\eta}$ on $M$: 
$$
\dot{\tilde{\eta}}_{s,t}(x) = u_t(\tilde{\eta}_{s,t}(x)) +\sigma_k(\tilde{\eta}_{s,t})\dot{z}^k_t, \quad \tilde{\eta}_{s,s}(x)=x, \quad s \leq t, \; \, x \in M.
$$
We understand this on a formal level, since   it is not clear how to construct the flow map $\tilde{\eta}$ due to the low regularity. 
Applying Reynold's transport theorem,  we find
$$
\frac{d}{dt}\oint_{\tilde{\eta}_{s,t}(C)} u_t^{\flat} = \oint_{\eta_{s,t}(C)} (\partial_t+\clL_{u+\sigma_k\dot{z}^{k}})u_t^{\flat}=\oint_{\eta_{s,t}(C)}\left( \bd \tilde{p}+\vartheta(\bd+\delta)^2 u^{\flat}\right),
$$
which yields conservation of circulation in the inviscid case $\vartheta=0$ (see also, \cite{crisan2017solution}).

Writing \eqref{eq:MainEquationNoCoordinates} in local coordinates, we obtain 
\begin{equation} \label{eq:MainEquationIntro}
\partial_t u^i  + u^j \frac{\partial}{\partial x_j} u^i + \left[ \sigma_k^j \frac{\partial}{\partial x_j} u^i + u^j \frac{\partial}{\partial x_i} \sigma^j_k \right] \dot{z}_t^k = - \frac{\partial}{\partial x_i} p + \vartheta \Delta u^i,
\end{equation}
where we note that we are again writing $p$ (and not 
$\tilde{p}$) which explains that $u^j \frac{\partial}{\partial x_i} u^j$ does not appear in the equation. This is the main equation we  study in this paper. In particular, we introduce a  formulation of the equation well suited  to make sense of the distributional terms $\dot{z}^k$ and to study well-posedness.

However, for technical reasons related to the noise term $\dot{z}^k$, the non-local nature of the pressure term (which translates to the divergence-free condition) makes it difficult to obtain a priori estimates directly from this formulation. We elaborate on this issue a bit more in Section \ref{sec:Related}.

One way to circumvent dealing with the pressure is to consider the 2-form $\tilde{\xi} = \bd u^{\flat}$, called the vorticity. Taking the exterior derivative in \eqref{eq:MainEquationNoCoordinates} and using that $\bd$ commutes with the Lie derivative, we get
$$
\partial_t \tilde{\xi}+\clL_{u}\tilde{\xi} + \clL_{\sigma_k} \tilde{\xi}  \dot{z}^k_t =\vartheta \bd \delta   \tilde{\xi}.
$$
To write the above in local coordinates, we consider the Hodge dual of the vorticity, which we denote by $\xi$, and is equal to the scalar $\ast \tilde{\xi}$ in dimension two and  the vector field $(\ast \xi)^{\sharp}$ in dimension three, where $\ast$ is the Hodge-star operator, which maps $2$-forms to $d-2$-forms. It follows that (see, e.g.,  pages 451 and 566 in \cite{taylor2013partial} and recall that we have assumed flatness for simplicity) 
$$
\partial_t \xi+\clL_{u}\xi + \clL_{\sigma_k} \xi  \dot{z}^k_t =\vartheta \Delta \xi,
$$
where $\clL_{\sigma_k} \xi=\sigma_k(\xi)$ in dimension two  since $\xi$ is a scalar and  $\clL_{\sigma_k} \xi =[\sigma_k,\xi]$ in dimension three since $\xi$ is a vector field, and we have slightly abused notation in writing the Laplacian on the right-hand-side. In standard coordinates, $\xi$ solves a scalar transport equation  in dimension two:
$$
\partial_t \xi + u^j \frac{\partial}{\partial x_j} \xi + \sigma_k^j \frac{\partial}{\partial x_j} \xi \, \dot{z}_t^k = \vartheta \Delta \xi, 
$$
 and $\xi$ solves a perturbed version of the usual vorticity equation in dimension three:
$$
(\partial_t \xi +[u+\sigma_k\cdot{z}^k,\xi])^i=\partial_t \xi^i + u^j \frac{\partial}{\partial x_j} \xi^i - \xi^j \frac{\partial}{\partial x_j} u^i + \left[ \sigma_k^j \frac{\partial}{\partial x_j} \xi^i - \xi^j \frac{\partial}{\partial x_j} \sigma_k^i \right]  \dot{z}_t^k = \vartheta \Delta \xi^i , \quad i \in \{ 1,2,3\}.
$$
The reader will notice that the difference between $d=2$ and $d=3$ is the presence of the formidable vorticity stretching terms $\xi^j \frac{\partial}{\partial x_j} u^i$ and $\xi^j \frac{\partial}{\partial x_j} \sigma_k^i$ in $d=3$, whose presence causes  difficulty from the analytic point of view, but interesting dynamics from the modeling point of view.
For convenience (and with a slight abuse of notation) we  abbreviate the two  equations for $\xi$ as
\begin{equation} \label{eq:MainEqVorticity}
\partial_t \xi + (u \cdot \nabla)  \xi - 1_{d=3}( u\cdot \nabla)\xi + \left[ (\sigma_k \cdot \nabla) \xi  - \mathbf{1}_{d=3} (\sigma_k\cdot \nabla) \xi\right] \dot{z}_t^k = \vartheta \Delta \xi ,
\end{equation}
and we note that there is no non-locality, meaning no pressure term which would influence the noise. In dimension two, by formally testing against $\xi$ and using the fact that the $\sigma_k$ are divergence-free, we find
\begin{equation}\label{eq:enst est intro}
|\xi_t|_{L^2}^2+2\vartheta \int_0^t|\nabla \xi_r|_{L^2}^2 dr=|\xi_0|_{L^2}^2,
\end{equation}
which implies that enstrophy is balanced in dimension two.

Equation \eqref{eq:MainEqVorticity} is, of course, still non-linear due to the presence of $\clL_u$, so one needs to write $u$ in terms of $\xi$. This operation, which is called the Biot-Savart law and acts as an inverse of $\bd$, can only be done up to additive constants since $\bd$ is a derivative. 
More precisely, one can see that the missing constant is the spatial average of $u$, which by formally integrating \eqref{eq:MainEquationIntro} in space, should satisfy 
\begin{equation} \label{eq:AverageEquationDifferentialForm}
\partial_t \int_{M} u^i_t(x) dx   + \int_{M} u^j_t(x) \frac{\partial}{\partial x_i} \sigma^j_k(x)   dx  \, \dot{z}_t^k = 0 
\end{equation}
when we assume $u$ and $\sigma_k$ are divergence-free and under appropriate boundary conditions on $M$. Notice that there is no geometric ambiguity in the above spatial integrals since we are considering the components of $u$.

Throughout our analysis, it is therefore necessary to preserve the information in \eqref{eq:AverageEquationDifferentialForm} as it  allows us to recover the full velocity. In other words, we  solve \eqref{eq:MainEqVorticity} and \eqref{eq:AverageEquationDifferentialForm} as a system of equations, which  is  better suited for deriving a priori estimates of \eqref{eq:MainEqVorticity}, which from now on will be referred to as \emph{enstrophy} estimates. In addition, the system  \eqref{eq:MainEqVorticity}, \eqref{eq:AverageEquationDifferentialForm}  is  shown  to be equivalent to \eqref{eq:MainEquationIntro} under the condition $\nabla\cdot u=0$. 
We note that there this issue does not appear  in the classical Navier-Stokes equations, that is, in the case $z^k = 0$. Indeed, equation \eqref{eq:AverageEquationDifferentialForm} shows that the Navier-Stokes system conserves the spatial average so that one may without loss of generality assume that $\int_M u_0^i(x) dx = 0$.

\subsection{Related literature and main contributions}
\label{sec:Related}

The stochastic Navier-Stokes equation has been well studied using Brownian motion as the driving noise. With no ambition  at an exhaustive list of references, let us  mention \cite{brzezniak1991stochastic}, \cite{brzezniak1992stochastic}, \cite{mikulevicius2004stochastic}, \cite{mikulevicius2005global} and \cite{flandoli1995martingale}. Moreover, a similar multiplicative noise as in the present paper have been studied in \cite{crisan2017solution} and \cite{BFM}. 
In the pathwise setting, using regularity structures, the Navier-Stokes system with space-times white noise has been studied in \cite{ZHU20154443}.

A problem similar to \eqref{eq:MainEquationIntro}, namely,
\begin{align}
\partial_t u  + (u \cdot \nabla )u & +  (\sigma_k \cdot \nabla)  u\, \dot{z}_t^k = - \nabla p + \vartheta \Delta u \label{OldEquation} \\
\nabla \cdot u&=0, \quad 
u|_{t=0} =u_0 \in \bL^2, \notag
\end{align}
has been studied by the same authors in \cite{HLN}.
On the surface, the main difference between \eqref{eq:MainEquationIntro} and \eqref{OldEquation} is that the noise in \eqref{OldEquation} is energy conservative for the velocity. However, based on the discussion in Section~\ref{sec:Derivation}, we see  that,  in general, the perturbation does not conserve circulation in the inviscid case nor enstrophy balance in dimension two. In fact, \eqref{OldEquation} is usually obtain by treating the solution of Navier-Stokes as a collection of scalar equations, thus ignoring the geometry of the problem (i.e., the Lie derivative). 
Furthermore, there are deep, highly technical and structural reasons why energy conservation for the velocity does not yield satisfactory well-posedness results.

More precisely, as it will become clear in the derivation below in Section \ref{ss:form}, applying the Helmholtz projection to the equation  entangles a non-locality into the rough integral term.
The only available method\footnote{An alternative method has been introduced in \cite{HNS}, but it is not clear whether it is applicable to  systems or how to encode the divergence-free condition.} to obtain uniqueness of weak solutions for rough PDEs in the variational setting is the method introduced in \cite{BaGu15}, based on commutator estimates \`a la DiPerna and Lions \cite{DiPernaLions}.
However, this approach seems to fail under the presence of the Helmholtz projection.
Consequently,  uniqueness in \cite{HLN} could only be proved under very  restrictive assumptions on the vector fields $\sigma_{k}$, namely the ones for which the rough term commutes with the Helmholtz projection, effectively restricting to constant vector fields.

Leaving this aside, there is also a structural problem with the equation containing the projection, even if one \emph{could} use the techniques of \cite{DiPernaLions}. Indeed, since the Helmholtz projection is not continuous on $L^{\infty}$, the equation for  $uu^{T}$, which is needed for the energy estimates, contains noise that cannot be made sense of in an appropriate Banach space as for instance $(L^{\infty})^{*}$ that is dictated by the deterministic part of the equation. To summarize, it is the unfavorable interplay between the energy conservative noise and the deterministic part of the equation reflected through the Helmholtz projection, which makes the problem not easily accessible for a direct  pathwise analysis.

\medskip

In the present paper, we take a different path and develop a model in which the noise conserves circulation in the inviscid limit (which we do not address in this paper) and enstrophy balance in dimension two. Note that the enstrophy corresponding to the $L^{2}$-norm of the vorticity is balanced in two space dimensions and conserved  in case $\vartheta=0$ as in the deterministic unforced setting. Enstrophy, however,  is not balanced in three dimensions, which leads to a significantly more involved analysis and only local in time solutions.
Moreover, since the vorticity formulation eliminates the pressure, the non-locality of the equation for the vorticity does  not influence the noise-term.
On the other hand, as discussed above, particular care has to be taken in order to fully recover the velocity from the vorticity formulation on the Torus. This subtlety seems to have been missed in the available literature.

Additionally, we establish  pathwise continuity properties which  easily enables the study of Wong-Zakai results for the case of Brownian motion with Stratonovich integration. Furthermore,  a generation of  a continuous random dynamical system follows for a large class of driving stochastic processes. This set up could also be used for studying large deviations and support theorems. 

Another contribution of this paper is that it further develops the theory of  unbounded rough drivers as introduced in \cite{BaGu15} and further developed in \cite{DeGuHoTi16} - a method aimed at studying PDEs perturbed by an unbounded operator valued noise term. Still, an abstract variational method in the spirit of \cite{Rockner} for these equations is not available. 
We believe that the way the present paper tailors the method of \cite{BaGu15} to the Navier-Stokes equation in a non-trivial way could explain why this is the case.
On the other hand, exactly this fact of being able to tune the method demonstrates its flexibility and suggests that this mentality could be used for studying other equations, and possibly build towards a general theory.

The paper is organized as follows. Section \ref{s:setting} is devoted to notations and definitions. The precise formulation of the problem, derivation of the vorticity formulation and the main results are described in Section \ref{ss:form}. Section \ref{sec:AprioriEst} contains basic a priori estimates. Enstrophy balance, uniqueness as well as the rough path stability and Wong-Zakai result in two space dimensions is presented in Section \ref{section:ProofOfTheoremUniqueness}, whereas Section \ref{Section:Galerkin} contains the proof of existence. Certain auxiliary results are collected in the appendix.

\section{Preliminaries}
\label{s:setting}

In this section, we introduce the notation and collect the basic definitions needed in the sequel.

\subsection{Sobolev spaces and vector calculus}
\label{ss:notation}

We begin by fixing the notation that we use throughout the paper.

For  a given $d\in \mathbf{N}$, let  $\bT^d=\bR^d/(2\pi \bZ)^d$ be the $d$-dimensional flat torus and denote by $dx$ the unnormalized Lebesgue measure on $\bT^d$. As usual, we blur the distinction between periodic functions and functions defined on the torus $\bT^d$. 
Let $\bL^2=L^2( \bT^d ; \bR^d)$.
For a given $m\in \bZ$, we denote by $\W^{m,2}$ the Sobolev space $\W^{m,2}=(I-\Delta)^{-\frac{m}{2}}\bL^2$ and by $|\cdot|_{m}$ the corresponding norm.
Let $\dot{\W}^{m,2}$ be the subspace of mean-free vector fields; that is, $f \in \W^{m,2}$ such that $\bar{f}:=\int_{\bT^d}f(x) dx = 0$. Although all these spaces contain vector valued functions, we shall sometimes abuse notation and write $\bW^{2,m}$ etc also for scalar functions when it is clear from the context. 
The Leray projection onto divergence-free vector fields is denoted by $P$
and we let  $Q = I - P$.  
We set 
$
\bH^{m}=P\W^{m,2}$ and  $\bH_{\perp}^{m}=Q\W^{m,2}
$  
and recall that  for all $m\in \bZ$ (see Lemma 3.7 in \cite{mikulevicius2003cauchy}),
$
\W^{m,2} =\bH^m\oplus \bH_{\perp}^m,
$
where 
$$
\bH^m=\left\{f\in \W^{m,2}: \;\nabla \cdot  f= 0\right\},
\quad \textrm{ and } \quad 
\bH_{\perp}^m=\{g\in \W^{m,2}:  (f,g)=0, \;\; \forall f\in \bH^{-m} \}.
$$
The corresponding mean-free spaces will be denoted $\dot{\bH}^m$ and $\dot{\bH}^m_{{\perp}}$.

\medskip

We will abbreviate notation and write simply $\partial_i$ denoting the derivative $\frac{\partial}{\partial x_i}$. Let $\sigma:\bT^d\rightarrow \bR^d$ be twice differentiable and assume that the derivatives of $\sigma$ up to order two are bounded uniformly by a constant $N_0$.
Let  $\clA^1=\sigma\cdot \nabla=\sum_{i=1}^d\sigma^i \partial_i$ 
and  $\clA^2=(\sigma\cdot \nabla)(\sigma\cdot \nabla).$
It follows  that there is a  constant  $N=N(d,N_0,\alpha)$  such that 
$$
|\clA^1|_{\clL(\W^{m+1,2},\W^{m,2})}\le N, \; m = 0,1,2,\quad
|\clA^2 f|_{\clL(\W^{m+2,2},\W^{m,2})}\le N, \; m = 0,1.
$$
Since  $P\in \clL(\W^{m,2},\bH^m)$ and $Q\in \clL(\W^{m,2}, \bH^m_{\perp})$ for all $m\in \bZ$,  both of which have operator norm bounded by $1$, we have 
\begin{align} \label{A1bound}
|P\clA^1|_{\clL(\bH^{m+1,2},\bH^{m,2})}\le N, &  \; m = 0,1,2,\quad
|P\clA^2 f|_{\clL(\bH^{m+2,2},\bH^{m,2})}\le N, \; m = 0,1. \\
|Q\clA^1|_{\clL(\bH_{\perp}^{m+1,2},\bH_{\perp}^{m,2})}\le N, & \; m = 0,1,2,\quad
|Q\clA^2 f|_{\clL(\bH_{\perp}^{m+2,2},\bH_{\perp}^{m,2})}\le N, \; m = 0,1.
\end{align}

For a given $u: \bT^d\rightarrow \bR^d$ and $d=2$ or $d=3$, we notice that the exterior derivative coincides with the curl operator and can be written in standard coordinates as
\begin{align*}
\operatorname{curl} u=\nabla \times u &= \big(\partial_2u^3-\partial_3u^2, \partial_3u^1-\partial_1u^3,\partial_1u^2-\partial_2u^1 \big),\quad \textnormal{ if } d=3,\\
\operatorname{curl} u=\nabla \times u& = \partial_1u^2-\partial_2u^1, \quad \textnormal{ if } d=2.
\end{align*}
We notice that $\nabla \times P = \nabla \times$.

For a mean-free $f:\bT^2\rightarrow \bR$, we define the Biot-Savart operator (i.e., inverse of the curl)
$$
\operatorname{\BS}f
=\nabla^{\perp}(-\Delta)^{-1}f
$$
where we have defined $
\nabla^{\perp}\psi=\partial_2\psi\bi- \partial_1\psi\bj
$
for
$\psi:\bT^2\rightarrow \bR$. For mean-free $f:\bT^3\rightarrow \bR^3$ the Biot-Savart operator is defined as
$$
\operatorname{\BS}f 
=\nabla \times (-\Delta)^{-1}f.
$$
It follows that for mean-free $f$:
\begin{align*}
\nabla \times (-\Delta)^{-1}\operatorname{\BS}f&=(-\Delta)^{-1}f, \;\;  \textnormal{ if }  d=2\\
\operatorname{\BS}^2f&=(-\Delta)^{-1}f, \;\;  \textnormal{ if }  \nabla \cdot f=0 \textnormal{ and } d=3
\end{align*}
and we have
\begin{equation}\label{ineq:BSGrad}
|\nabla  \operatorname{\BS}f|_{m}=| f|_{m}.
\end{equation}
Moreover, for all $n\in \bN$,
$$
\operatorname{\BS}\in \clL(\dot{\bW}^{2,n-1}, \dot{\bH}^{n}), \quad 
\operatorname{curl}\in \clL(\bW^{2,n}, \dot{\bH}^{n-1}),
$$ 
and we have
$$
\operatorname{curl}\circ \operatorname{\BS}\in \clL(\dot{\bW}^{2,n-1}, \dot{\bH}^{n-1})
$$
is the identity operator  for $d=2$ and restricts to the identity operator on $\dot{\bH}^{n-1}$ for $d=3$ and $$\operatorname{\BS}\circ \operatorname{curl}\in \clL(\bW^{2,n},\dot{\bH}^n) $$ restricts to the identity on $\dot{\bH}^{n}$ for $d\in \{2,3\}$.  

\medskip

In order to analyze the non-linear term in \eqref{eq:MainEquationIntro}, we employ the classical notation and bounds.
Owing to Lemma~2.1 in \cite{RT83}, the trilinear form
$$
b(u,\varv,w)= \int_{\bT^d} ((u\cdot \nabla)\varv)\cdot w \, \, dx=\sum_{i,j=1}^d\int_{\bT^d} u^i D_i\varv^j w^j \,\, dx
$$
satisfies the continuity property
\begin{equation}\label{trilinear form estimate}
|b(u,v,w)|\lesssim_{m_1,m_2,m_3,d}
|u|_{m_1}|v|_{m_2+1}|w|_{m_3}, \qquad m_1 + m_2 + m_3 > \frac{d}{2} . 
\end{equation}
Moreover, for all $u\in  \bH^{m_1}$ and  $(\varv,w)\in \bW^{m_2+1,2}\times \bW^{m_3,2}$ such that $m_1,m_2,m_3$ satisfy \eqref{trilinear form estimate}, we have
\begin{equation}\label{eq:B prop}
b(u,\varv,w)=-b(u,w,\varv) \quad \textnormal{and} \quad b(u,\varv,\varv)=0.
\end{equation}
For $m_1,m_2,$ and $m_3$ that satisfy \eqref{trilinear form estimate} we get a bilinear mapping $B: \bW^{m_1,2}\times \bW^{m_2+1,2} \rightarrow \bW^{ - m_3}$ by setting $(B(u,v),w) = b(u,v,w)$.
We define $B_P=PB$ and $B_Q=QB$ and giving the continuous bilinear mappings 
$$
B_P : \bW^{m_1,2}\times \bW^{m_2+1,2}\rightarrow \bH^{-m_3},\quad B_Q: \bW^{m_1,2}\times \bW^{m_2+1,2}\rightarrow \bH^{-m_3}_{\perp}.
$$
We set $B(u) = B(u,u)$ and similarly for $B_P$ and $B_Q$.

\subsection{Rough paths} \label{ss:rp}

For an interval $I,$ we use the notation $\Delta_I := \{(s,t)\in I^2: s\le t\}$ and $\Delta^{(2)}_I := \{(s,\theta,t)\in I^3: s\le\theta\le t\}$.  For simplicity we let $\Delta_T := \Delta_{[0,T]}$ and $\Delta^{(2)}_T=\Delta^{(2)}_{[0,T]}$ for $T>0$. Let $E$ be a Banach space with norm $| \cdot |_E$. 
A function $g: \Delta_I \rightarrow E$ is said to have finite $p$-variation for some $p>0$ on $I$ if 
$$
|g|_{p-\textnormal{var};I;E}:=\sup_{(t_i)\in \clP(I)}\left(\sum_{i}|g _{t_i t_{i+1}}|^p_E\right)^{\frac{1}{p}}<\infty, 
$$
where $\mathcal{P}(I)$ is the set of all partitions of $I$. We denote by $C_2^{p-\textnormal{var}}(I; E)$ the set of all continuous functions with finite $p$-variation on $I$ equipped with the seminorm $|\cdot |_{p-\textnormal{var}; I;E}$ and by $C^{p-\textnormal{var}}(I; E)$ the set of all paths $z : I \rightarrow E$ such that $\delta z \in C_2^{p-\textnormal{var}}(I; E)$, where  $\delta z_{st} := z_t - z_s$. In this section, we drop the dependence of norms  on the space $E$ when convenient. 

A continuous mapping $\omega: \Delta_I \rightarrow [0,\infty)$ is called a control on $I$ provided $\omega(s,s) = 0$ and it is superadditive, namely
$$
\omega(s,\theta)+\omega(\theta ,t)\le \omega(s,t) , \qquad  s \leq \theta \leq t.
$$
If for a given $p > 0$, $g \in C^{p-\textnormal{var}}_2(I; E)$, then  it can be shown that the 2-index map $\omega_g: \Delta_I \rightarrow [0,\infty)$ defined by
$$
\omega_g(s,t)= |g|_{p-\textnormal{var};[s,t]}^p 
$$
is a control  (see, e.g.,  Proposition 5.8 in \cite{FrVi10}). Moreover, it is straightforward to check that one could equivalently define the semi-norm  on $C_2^{p-\textnormal{var}}(I;E)$ by 
\begin{equation}\label{equivdefpvar}
|g |_{p-var; [s,t]} = \inf \{ \omega(s,t)^{\frac{1}{p}} : |g_{uv}| \leq \omega(u,v)^{\frac{1}{p}} \textnormal{ for all } (u, v)  \in \Delta_{[s,t]} \} . 
\end{equation}
We shall need the following local version of the $p$-variation spaces.

\begin{definition}\label{def:variationSpace}
Given an interval $I=[a,b]$, a control $\varpi$ and real number $L> 0$, we denote by $C^{p-\textnormal{var}}_{2, \varpi, L}(I; E)$  the space of continuous two-index maps $g : \Delta_I \rightarrow E$ for which  there exists a control $\omega$ such that for every $(s,t)\in \Delta_I$ with  $\varpi(s,t) \leq L$, it holds that
$
|g_{st}|_E \leq \omega(s,t)^{\frac{1}{p}}.
$ We define a semi-norm on this space by
$$
|g |_{p-\textnormal{var}, \varpi,L; I} =\inf \left\{\omega(a,b)^{\frac{1}{p}} :  \omega  \textnormal{ is a control s.t. } |g_{st}| \leq \omega(s,t)^{\frac{1}{p}}, \;\forall (s, t)  \in \Delta_{I} \textnormal{  with } \varpi(s,t) \leq L \right\} . 
$$ 
\end{definition}
It is clear that
\begin{equation} \label{inclusionOfLocalVariation}
C^{p-\textnormal{var}}_{2, \varpi_1, L_1}(I; E) \subset C^{p-\textnormal{var}}_{2, \varpi_2, L_2}(I; E)
\end{equation}
for $\varpi_1 \leq \varpi_2$ and $L_2 \leq L_1$.

Next, we present the definition of a rough path and the reader is referred  to \cite{MR2314753, FrVi10, FrHa14} for a thorough exposition of the theory of rough paths. For a two-index map $g: \Delta_{I}\rightarrow \bR$, we define the second order increment operator $$\delta g_{s \theta t} = g_{st} - g_{\theta t} - g_{s \theta}, \qquad  s \leq \theta \leq t.$$

\begin{definition}\label{defi-rough-path}
Let $K \in\bN $ and $p\in[2,3)$. A continuous $p$-rough path is a  pair 
\begin{equation}\label{p-var-rp}
\bZ=(Z, \mathbb{Z}) \in C^{p-\textnormal{var}} ([0,T];\bR^{K}) \times C^{\frac{p}{2}-\textnormal{var}}_2 ([0,T]; \bR^{K\times K}) 
\end{equation}
that satisfies the Chen's relation 
\begin{equation*}\label{chen-rela}
\delta \mathbb{Z}_{s\theta t}=Z_{s\theta} \otimes Z_{\theta t},   \qquad  s \leq \theta \leq t.
\end{equation*}
We will denote by $\omega_Z$ the smallest control dominating both $|Z_{st}|^p$ and $|\mathbb{Z}_{st}|^{\frac{p}{2}}$. 
A continuous $p$-rough path $\mathbf{Z}=(Z, \mathbb{Z})$ is said to be geometric if it can be obtained as the limit in the   product topology $C^{p-\textnormal{var}}_2 ([0,T];\bR^{K}) \times C^{\frac{p}{2}-\textnormal{var}}_2 ([0,T]; \bR^{K\times K})$  of a sequence of rough paths  $\{(Z^{n},\mathbb{Z}^{n})\}_{n=1}^{\infty}$ such that for each $n\in \bN$, 
$$Z^{n}_{st}:= \delta z^{n}_{st} \quad \textnormal{ and } \quad \mathbb{Z}^{n}_{st}:=\int_s^t \delta z^{n}_{s r} \otimes \dot{z}^n_{r} d r ,$$
for some smooth path $z^n:[0,T] \to \bR^K$.
We denote by $\mathcal{C}^{p-\textnormal{var}}_g([0,T];\bR^K)$ the set of geometric $p$-rough paths and endow it with the product topology.
\end{definition}

\subsection{Unbounded rough drivers}\label{sec:unboundedrough}
In \cite{Davie}, A.M. Davie makes the groundbreaking observation that rough differential equations can be interpreted as an equation in Taylor expansions. This notion of solution is obtained by iterating a rough differential equation into itself and using Taylor's formula to re-expand non-linearities in terms of the equation itself. The final expression is an increment equation that allows for detailed analysis of the solution in terms of the oscillations of the temporal noise.

Copying this to the framework of PDEs with unbounded perturbations, we are led to iterating the vector fields acting on the solution. Now, the oscillations in time are coupled with spatial derivatives.
Thus one needs  appropriate function spaces in order  to capture the behavior of the involved quantities with respect to the spatial variable. 
	
In what follows, consider a quadruple $(E^{n}, | \cdot |_n)_{ n =0}^3$ of Banach spaces such that $E^{n + k}$ is continuously embedded into $E^{n}$ for $k,n\in\{0,\dots,3\}$ such that $n+k \leq 3$. We denote by $E^{-n}$ the topological dual of $E^{n}$, and note that, in general, $E^{-0}\neq E^0.$ When the norm is clear from the context, we call $(E^n)_n$ a scale of spaces and it is understood that $n\in\bN$ such that $-3 \leq n \leq 3$.

\begin{definition}
\label{def:urd}
Let $p\in [2,3)$ and $T>0$ be given. A continuous unbounded $p$-rough driver with respect to the scale $(E^n)_{n}$, is a pair $\mathbf{A} = (A^1,A^2)$ of $2$-index maps such that
there exists a continuous control $\omega_A$ on $[0,T]$ such that for every $(s,t)\in \Delta_T$,
\begin{equation}\label{ineq:UBRcontrolestimates}
| A^1_{st}|_{\mathcal{L}(E^{n},E^{n - 1})}^p \leq\omega_{A}(s,t) \ \  \text{for}\ \ -2 \leq n \leq 3 , \quad
|A^2_{st}|_{\mathcal{L}(E^{n},E^{n-2})}^{\frac{p}{2}} \leq\omega_{A}(s,t) \ \ \text{for}\ \ 0\leq n\leq 2,
\end{equation}
and   Chen's relation holds true,
\begin{equation}\label{eq:chen-relation}
\delta A^1_{s\theta t}=0,\qquad\delta A^2_{s\theta t}= A^1_{\theta t}A^1_{s\theta},\;\;\forall (s,\theta,t)\in\Delta^{(2)}_T.
\end{equation}
\end{definition}
We shall need a tool that allows us to compare the regularity of the different spaces in the scale $(E^n)_{n}$.

\begin{definition} \label{def:smoothingOp}
A family of smoothing operators $(J^{\eta})_{\eta \in (0,1]}$ acting on $(E^n)_{n }$ is a family of self-adjoint operators such that,
\begin{equation} \label{smoothingOperator}
|(I - J^{\eta}) f |_{n} \lesssim \eta^{k} |f|_{n+k} \hspace{.5cm} \textnormal{ and} \hspace{.5cm} | J^{\eta} f |_{n+k} \lesssim \eta^{-k} |f|_{n},
\end{equation}
for $0 \leq k \leq 2$ and $-2 \leq n \leq 2$ such that $-3 \leq n+k \leq 3$. 
\end{definition}
In the scale $(\bH^{n})_{n }$ a family of smoothing operators can be constructed using the frequency cut-off, see \cite{HLN}. In fact, in this case \eqref{smoothingOperator} is valid for any integers $k,n$. In the case of $L^{\infty}$-scale on the torus, one may employ convolution with a nonnegative smoothing kernel to obtain \eqref{smoothingOperator}  for $k\in \{0,1,2\}$. 

We include here the main a priori estimate from \cite{DeGuHoTi16}. See however Theorem \ref{Thm2.5} for a related result. 
\begin{theorem} \label{Thm2.5Abstract}
Assume
\begin{itemize}
\item
$(E^n)_{n }$ is a scale of spaces for which there exists a family of smoothing operators;

\item
$\mathbf{A}  = (A^1,A^2)$ is an unbounded $p$-rough driver on $(E^n)_{n }$ for $p \in [2,3)$;

\item
$\mu: I \rightarrow E^{-1}$ is of bounded 1-variation (i.e., $| \delta \mu_{st} |_{-1} \leq \omega_{\mu}(s,t)$ for some control $\omega_{\mu}$);
\item
$g: I \rightarrow E^{-0}$ is a bounded path such that 
\begin{equation} \label{eq:AbstractURDEquation}
d g_t =  d \mu_t + \mathbf{A}(dt) g_t
\end{equation}
in the sense that 
$$
g_{st}^{\natural} := \delta g_{st} - \delta \mu_{st} - A_{st}^1 g_s - A_{st}^2 g_s
$$
belongs to $C_{2,\varpi,L}^{\frac{p}{3} - var}(I;E^{-3})$.
\end{itemize}
Then there exists a constant $\tilde{L}$ such that for all $(s,t) \in \Delta_I$ with $\varpi(s,t) \leq L$ and $\omega_A(s,t) \leq \tilde{L}$ we have
$$
|g_{st}^{\natural}|_{-3} \lesssim |g|_{L^{\infty}_I E^{-0}} \omega_A(s,t)^{\frac{3}{p}} + \omega_{\mu}(s,t) \omega_A(s,t)^{\frac{1}{p}} .
$$
\end{theorem}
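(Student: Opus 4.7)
The plan is to follow the sewing and rough-Gronwall strategy for unbounded rough drivers developed in \cite{BaGu15, DeGuHoTi16}. The starting point is to compute the coboundary of $g^\natural$. A short calculation using the additivity $\delta(\delta g) = \delta(\delta\mu) = 0$ together with Chen's relations $\delta A^1 = 0$ and $\delta A^2_{s\theta t} = A^1_{\theta t}A^1_{s\theta}$, followed by substitution of the equation $\delta g_{s\theta} = g^\natural_{s\theta} + \delta\mu_{s\theta} + (A^1_{s\theta}+A^2_{s\theta})g_s$, yields
\[
\delta g^\natural_{s\theta t} = (A^1_{\theta t}+A^2_{\theta t})g^\natural_{s\theta} + (A^1_{\theta t}+A^2_{\theta t})\delta\mu_{s\theta} + A^1_{\theta t}A^2_{s\theta}g_s + A^2_{\theta t}A^1_{s\theta}g_s + A^2_{\theta t}A^2_{s\theta}g_s .
\]

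The next step is to bound each term in $E^{-3}$ using the continuity bounds of the URD. The iterated compositions $A^1A^2 g_s$ and $A^2A^1 g_s$ yield contributions of order $\omega_A(s,t)^{3/p}|g|_{L^\infty_I E^{-0}}$, while the drift terms $(A^1+A^2)\delta\mu_{s\theta}$ give $\omega_A(s,t)^{1/p}\omega_\mu(s,t)$; since $3/p>1$ and $1+1/p>1$, these are of the form required by the sewing lemma. The problematic ingredients are the compositions $A^j g^\natural_{s\theta}$ and $A^2A^2 g_s$, which do not literally make sense within the scale: $A^1$ is not assumed to act from $E^{-3}$, and $A^2\circ A^2$ maps $E^{-0}$ into $E^{-4}$, outside the given scale. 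I would handle these by inserting the smoothing operator $J^\eta$ provided by Definition \ref{def:smoothingOp}, writing for instance $g^\natural = J^\eta g^\natural + (I-J^\eta) g^\natural$: the smoothed piece gains derivatives at cost $\eta^{-1}$ so that $A^1$ acts on it, while the rough remainder is re-expanded through the definition of $g^\natural$ and paired against the factor $\eta$ gained from $I-J^\eta$ on the regular components. Optimizing the resulting bound over $\eta$ produces an estimate of the shape
\[
|\delta g^\natural_{s\theta t}|_{-3} \lesssim \omega_A(s,t)^{3/p}|g|_{L^\infty_I E^{-0}} + \omega_A(s,t)^{1/p}\omega_\mu(s,t) + \omega_A(s,t)^{1/p}|g^\natural|_{\frac{p}{3}\text{-var},\varpi,L;[s,t]} .
\]

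Finally, the sewing lemma applied to the $E^{-3}$-valued two-index map $g^\natural$ --- valid because $p/3 < 1$ --- transfers the coboundary estimate to a bound on $g^\natural$ itself with the same right-hand side. The self-referential remainder $\omega_A^{1/p}|g^\natural|_{\frac{p}{3}\text{-var}}$ is absorbed into the left-hand side by restricting to intervals on which $\omega_A(s,t)\leq\tilde{L}$ and choosing $\tilde{L}$ small enough that the implicit constant times $\tilde{L}^{1/p}$ is less than one half; this is precisely the smallness condition in the statement, and the separate restriction $\varpi(s,t)\leq L$ comes from the local variation space in which $g^\natural$ is assumed to live. The main obstacle is the handling of the anomalous compositions $A^j g^\natural$ and $A^2A^2 g_s$ that fall outside the URD scale: the smoothing trick, the appearance of a self-referential term, and the smallness threshold $\tilde{L}$ are all forced by the need to close the estimate for these terms, and they are the reason one cannot simply invoke a scalar sewing lemma.
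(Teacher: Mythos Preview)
Your proposal is correct and follows essentially the same smoothing--sewing--absorption strategy as \cite{DeGuHoTi16}, which the paper cites for this result without giving its own proof; the related Lemma~\ref{Thm2.5} in the paper implements the identical scheme. The only cosmetic difference is that the paper organizes the coboundary as $\delta g^\natural_{s\theta t} = A^2_{\theta t}\,\delta g_{s\theta} + A^1_{\theta t}\,g^\sharp_{s\theta}$ with $g^\sharp = \delta g - A^1 g_s$ and applies the smoothing $J^\eta$ to the test function (using the two representations of $g^\sharp$ on the $J^\eta$ and $(I-J^\eta)$ pieces), whereas you fully expand first and smooth $g^\natural$ directly; by self-adjointness of $J^\eta$ these are equivalent, and your absorption via small $\tilde L$ is the same as the paper's choice $\eta = \omega_A^{1/p}\lambda$ with $\lambda$ large.
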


\section{Formulation of the problem and the main results}
\label{ss:form}

As the first step of our analysis, we derive a rough path formulation of \eqref{eq:MainEquationIntro} and \eqref{eq:MainEqVorticity}, which will be satisfied by solutions constructed by our main existence result below, Theorem \ref{existenceThmNoGalerkin}. For notational convenience, we change the sign of the vector fields $\sigma_k$, so that \eqref{eq:MainEquationIntro} becomes
\begin{equation}
\begin{aligned} \label{eq:RNSDiffForm}
\partial_t u_t + (u_t \cdot \nabla) u_t +\nabla p_t    & =   \vartheta \Delta u_t+[(\sigma_k\cdot \nabla )u_t +(\nabla \sigma_k)u_t]\,\dot{z}^k_t ,  \\
\nabla\cdot u_t & = 0, \quad  u_t|_{t=0}  = u_0,
\end{aligned}
\end{equation}
for a given initial condition $u_0\in \bH^1$ and vector fields $\sigma_k \in \W^{3, \infty}_{\Div}$ (i.e., divergence-free).  The unknowns in  \eqref{eq:RNSDiffForm} the velocity field $u: [0,T] \times \bT^d \rightarrow \bR^d$ and the  pressure $p: [0,T] \times \bT^d \rightarrow \bR$.

We study the Navier-Stokes equation in the variational framework by decoupling the velocity field and the pressure into two equations using the Leray projection $P$ defined in Section \ref{ss:notation}. Applying the solenoidal $ P :\W^{m,2} \rightarrow \bH^m$ and gradient projection $Q: \W^{m,2} \rightarrow \bH^{m}_{\perp}$ separately to \eqref{eq:RNSDiffForm} yields
\begin{equation} \label{eq:RNSDiffFormSys}
\begin{aligned}
\partial_t u_t + P[ (u_t\cdot \nabla) u_t]&= \vartheta \Delta u_t+P[(\sigma_k\cdot \nabla) u_t+(\nabla \sigma_k)u_t] \dot{z}_t^k,  \\
\nabla p_t+Q[ (u_t \cdot \nabla)  u_t] &=Q [(\sigma_k\cdot \nabla) u_t+(\nabla \sigma_k)u_t]  \dot{z}_t^k.
\end{aligned}
\end{equation}
As is usual, we study the equation for $u$ and later show that we can give meaning to the equation for $\nabla p$, see Lemma \ref{lem:PressureRecovery} and Remark \ref{remark:PressureRecovery}. To this end, let us assume that $z^k$ is smooth and iterate the equation for $u$ into itself to obtain
\begin{align}
\delta u_{st} +\int_s^t P[ (u_r \cdot \nabla)  u_r]\,dr&=\int_s^t \vartheta\Delta u_rdr+ [\clA_{st}^{1}+\clA_{st}^{2}]u_s +u_{st}^{\natural}  \label{eq:RNSRoughFormU}, 
\end{align}
where we have defined  
\begin{align}
\clA^{1}_{st}\phi := P  \tilde{\mathcal{L}}_{\sigma_k} \phi   \, Z_{st}^{k} , \quad 
\clA^{2}_{st}\phi:=P  \tilde{\mathcal{L}}_{\sigma_k} P \tilde{\mathcal{L}}_{\sigma_l}\phi  \mathbb{Z}^{l,k}_{st}, \quad \tilde{\mathcal{L}}_{\sigma_k}\phi:=(\sigma_k\cdot \nabla)\phi +(\nabla \sigma_k)\phi,\label{eq:URDdef} 
\end{align}
\begin{align}
u_{st}^{\natural} &:= \int_s^t P   \tilde{\mathcal{L}}_{\sigma_k} \delta \mu_{sr}  \, dz^k_r  +  \int_s^t P  \tilde{\mathcal{L}}_{\sigma_k}\int_s^r P  \tilde{\mathcal{L}}_{\sigma_l} \left(  \delta \mu_{sr_1}  + P \int_s^{r_1}  \tilde{\mathcal{L}}_{\sigma_m} u_{r_2} \, dz^m_{r_2}   \right) \, dz^l_{r_1} \, dz^k_r, \label{eq:ExplicitRemainder} 
\end{align}
and
\begin{equation} \label{eq:DriftInH0}
\mu_{t}=\int_0^{t} \left[\vartheta\Delta u_r - P (u_r \cdot \nabla) u_r\right]  dr .
\end{equation}
In the above, we have used  suggestive notation for the operator $\tilde{\mathcal{L}}_{\sigma_k}$ as a reminder that it related to the Lie derivative operator. Since $u$ is a vector field, the  Lie-derivative of $u$ by $\sigma_k$ is given by $\clL_{\sigma_k}u:=(\sigma_k\cdot \nabla)\phi -(u\cdot \nabla)\sigma_k $, which is clearly not that same as $\tilde{\clL}_{\sigma_k}u$ unless $\sigma_k$ is constant in space. However, the Lie derivative of the one-form $u^{\flat}=u^idx^i$ associated with $u=u^i\partial_{x^i}$ by $\sigma_k$ is given by $\clL_{\sigma_k}u^{\flat}=(\sigma_k\cdot \nabla)u^{\flat} +(\nabla \sigma_k)u^{\flat}$. Thus,  $\tilde{\clL}_{\sigma_k}u=(\clL_{\sigma_k}u^{\flat})^{\sharp}=(\sigma_k\cdot \nabla)u +(\nabla \sigma_k)u$. 

Since we expect $\mu \in C^{1 -\textnormal{var}}([0,T]; \bH^{0})$ and $u \in L_T^{\infty} \bH^1$, the remainder $u^{\natural}$ in \eqref{eq:ExplicitRemainder} is expected to belong to $ C^{\zeta -\textnormal{var}}_2([0,T]; \bH^{-2})$, for some $\zeta <1$. Assume now that $z^k$ is not a smooth path, but we know how to make sense of $\mathbb{Z}$. Then, the only term that lacks a priori meaning in \eqref{eq:RNSRoughFormU} is the term $u^{\natural}$. However, from formal power counting of the integrals in \eqref{eq:ExplicitRemainder} we still expect this term to be a negligible remainder.
Thus, equation \eqref{eq:RNSRoughFormU} is to be understood in the sense that we \emph{define} the remainder term $u^{\natural}$ from the solution $u$. This will be made precise in Definition \ref{def:RNSStrongSol} below. 

The pair $\boldsymbol{\clA}=(\clA^{1}, \clA^{2})$ is an unbounded $p$-rough driver in the sense of  Definition \ref{def:urd} on the scale $(\bH^{n})_{n}$. 
Indeed,  the existence of a control  $\omega_{\clA}$ such that \eqref{ineq:UBRcontrolestimates} holds follows from the discussion in Section \ref{ss:notation} and the fact that $(Z,\mathbb{Z})$ is a  $p$-rough path in the sense of Definition \ref{defi-rough-path}, which also implies  Chen's relation \eqref{eq:chen-relation}.  We note that control  $\omega_{\clA}$  can be chosen to satisfy 
\begin{equation}\label{ineq:Leraycontrolestimate}
\omega_{\clA}(s,t) \le C\omega_{Z}(s,t),\;\;\forall (s,t)\in \Delta_T,
\end{equation}
for a constant $C>0$  depending only on $d$ and the bounds on $\sigma_i$ in $\W^{2, \infty}$.

We will now give our first definition of a solution to \eqref{eq:RNSDiffForm}.
\begin{definition}\label{def:RNSStrongSol}
We say that $u$ is a strong solution of \eqref{eq:RNSDiffForm} up to time $T^*$ if $u : [0,T^*] \rightarrow \bH^1$ is weakly continuous, $u\in L^2_{T^*}\bH^2 \cap L^{\infty}_{T^*} \bH^1$  and $u^{\natural} : \Delta_{T^*} \rightarrow \bH^{-2}$ defined by  
\begin{align} 
u_{st}^{\natural}(\phi)& :=   \delta u_{st} (\phi ) +  \int_s^t  \left[ - \vartheta (\Delta u_r, \phi) + B_P(u_r)(\phi) \right]\,dr   -  u_s([\clA_{st}^{1,*} +\clA_{st}^{2,*}]\phi) , \label{SystemSolutionU} 
\end{align}
for all $ \phi\in \bH^{2}$ satisfies
$ 
u^{\natural} \in C^{\frac{p}{3}-\textnormal{var}}_{2, \varpi,L}([0,T^*]; \bH^{-2}) 
$
for some control $\varpi$ and $L> 0$.
\end{definition}

\begin{remark}
By the regularity assumption on $u$, it follows that the $dr$-integral in  \eqref{SystemSolutionU} is well-defined. \end{remark}

\begin{remark}
It is possible to formulate a rough version of \eqref{eq:RNSDiffForm} without projecting the equation onto the scale of divergence-free spaces by keeping the pressure in the equation. This formulation was discussed in \cite{HLN} for the case of an energy conservative noise, but the computations carry over mutatis mutandis to the current case giving an equivalent formulation of \eqref{eq:RNSRoughFormU} and \eqref{eq:RNSRoughFormPi} below.
\end{remark}

\begin{remark} \label{rem:URDtoIntegral}
The expansion $\clA^1_{st} u_s + \clA^2_{st} u_s + u_{st}^{\natural}$ should be thought of as the projection of the rough path integral; that is, 
$$
\clA^1_{st} u_s + \clA^2_{st} u_s + u_{st}^{\natural}=P \int_s^t [ (\sigma_k \cdot \nabla) + ( \nabla \sigma_k ) ] u_r \,dZ_r^k . 
$$
Indeed, the expression $\clA^1_{st} u_s + \clA^2_{st} u_s$ represents a local expansion of the rough path integral. If $u^{\natural}$ is a remainder, then by the sewing lemma,  \cite[Lemma 4.2]{FrHa14}, this uniquely defines a path representing the rough integral. 
\end{remark}

\subsection{Vorticity formulation}
Applying  the curl operator $\nabla \times\cdot$ to both sides of \eqref{eq:RNSDiffForm}, we obtain 
\begin{equation}\label{eq:vv}
\partial_t \xi + (u \cdot \nabla)  \xi - 1_{d=3}( \xi\cdot \nabla )u  = \vartheta \Delta \xi + \left[ (\sigma_k \cdot \nabla) \xi  - \mathbf{1}_{d=3} ( \xi\cdot \nabla) \sigma_k\right] \dot{z}_t^k.
\end{equation}

Let us suppose that there exists a strong solution $u$ of \eqref{eq:RNSDiffForm} on $[0,T]$ as defined by Definition \ref{def:RNSStrongSol}. To find a rough path version of the vorticity formulation we apply the curl operator 
 $\xi=\nabla \times $ to both sides of  \eqref{eq:RNSRoughFormU}.
Using  properties of the curl operator and
that $\xi=\nabla \times u$ is a weakly continuous function $\xi : [0,T] \rightarrow \dot{\bH}^0 $ with  $\xi\in L^2_T\dot{\bH}^{1}\cap L^{\infty}_T\dot{\bH}^0$, we find  that $\xi^{\natural} : \Delta_T \rightarrow \dot{\bH}^{-3}$ defined  for all $ \phi\in \dot{\bH}^3$ and $(s,t)\in \Delta_T$  by
\begin{equation}\label{eq:vorticityroughformulation}
\xi_{st}^{\natural}(\phi)=  \delta \xi_{st}(\phi)  +  \int_s^t \left[\vartheta (\nabla  \xi_r,\nabla \phi) + (u_r\cdot \nabla )\xi_r)(\phi)-\mathbf{1}_{d=3}( ( u_r\cdot \nabla ) \xi_r )(\phi)\right]\,dr  - \xi_s([A^{1,*}_{st}+A^{2,*}_{st}]\phi).
\end{equation}
satisfies $\xi^{\natural} \in C^{\frac{p}{3}-\textnormal{var}}_{2, \varpi,L}([0,T]; \dot\bH^{-3})$  for some control $\varpi$ and $L> 0$,
where
\begin{equation} \label{eq:CurlOfURD1}
A_{st}^1\phi:=\nabla \times \clA_{st}^{1} \phi =\clL_{\sigma_k}\phi Z^k_{st}= \big((\sigma_k\cdot \nabla)\phi  - \mathbf{1}_{d=3} ( \phi\cdot \nabla ) \sigma_k \big)    Z^k_{st}
\end{equation}
\begin{align} 
A_{st}^2 \phi :=\nabla \times \clA_{st}^{2} \phi &=\clL_{\sigma_k}\clL_{\sigma_l}\phi \mathbb{Z}^{l,k}_{st}=\big( (\sigma_k\cdot \nabla)(\sigma_l\cdot \nabla)\phi-\mathbf{1}_{d=3}(\sigma_k\cdot \nabla) ((\phi \cdot \nabla) \sigma_l ) \notag \\
&\qquad  -\mathbf{1}_{d=3}((\sigma_l\cdot \nabla) \phi \cdot \nabla)\sigma_k +\mathbf{1}_{d=3}  (((\phi\cdot \nabla) \sigma_l)\cdot \nabla) \sigma_k \big)\mathbb{Z}^{l,k}_{st}. \label{eq:CurlOfURD2}
\end{align}
Indeed, the equalities \eqref{eq:CurlOfURD1} and \eqref{eq:CurlOfURD2} follows from the fact that $\nabla \times P = \nabla \times$ and $\nabla \times \tilde{\mathcal{L}}_{\sigma_k} = \mathcal{L}_{\sigma_k} \nabla \times $ on divergence-free vector fields, which can be checked by direct calculation or by appealing to the differential geometry  (see  page 451 in \cite{taylor2013partial}).

It is clear that   $\bA=(A^1,A^2)$  satisfies  \eqref{ineq:UBRcontrolestimates} for the scale  $(\dot{\bH}^{m})_{m}$ with a control $\omega_{A}$.  The control of  $\omega_{A}$ can be chosen so that
$$
\omega_{A}(s,t)\le C\omega_{Z}(s,t),\;\;\forall (s,t)\in\Delta_T,
$$
for a constant $C>0$  depending only on $d$ and the bounds on $\sigma_i$ in $\W^{2, \infty}$.
Notice that when inverting the curl (i.e., applying the Biot-Savart law $\BS$ to $\xi$), we can only recover the mean-free part of $u$, thus we also need to control the mean of $u$. 
We denote by $\bar{u}$ the spatial mean of $u$; that is, $\bar{u}$ is the $d$-dimensional vector with the $m$'th component given by $\bar{u}^m := (u,e_m)$ where $e_m$ is the $m$'th basis vector of $\bR^d$. Furthermore let $v = u - \bar{u}$ be the mean-free part. The remainder of the mean satisfies
$$
\bar{u}_{st}^{\natural,m} :=u_{st}^{\natural}(e_m)=  (\delta u_{st},e_m) - ([\clA_{st}^{1} +\clA_{st}^{2}] u_s, e_m) = \delta \bar{u}_{st}^m - ([\clA_{st}^{1} +\clA_{st}^{2}] \bar{u}_s, e_m) - ([\clA_{st}^{1} +\clA_{st}^{2}] v_s, e_m) .
$$
We see that $(\clA_{st}^i \bar{u}_s, e_m)  = 0$ since $\bar{u}_s$ is constant in space. 
Moreover, we get
\begin{align*}
(\clA^1_{st} v_s, e_m) & = (  P [ (\sigma_k \cdot \nabla) v_s], e_m  )Z_{st}^k +  (P [(\nabla \sigma_k)  v_s], e_m)Z_{st}^k = (v^l_s, \partial_m \sigma_k^l) Z_{st}^k
\end{align*}
and
\begin{align*}
(\clA^2_{st} v_s, e_m) & = (P( (\sigma_k \cdot \nabla) P [\sigma_j \cdot\nabla v_s]), e_m) \mathbb{Z}_{st}^{j,k} + (P( (\sigma_k \cdot\nabla) P [(\nabla \sigma_j)  v_s]), e_m)\mathbb{Z}_{st}^{j,k} \\ 
& \quad+ (P( (\nabla  \sigma_k)  P [ (\sigma_j \cdot \nabla)  v_s])), e_m)\mathbb{Z}_{st}^{j,k}  + (P( (\nabla \sigma_k)  P [(\nabla \sigma_j)  v_s])), e_m) \mathbb{Z}^{j,k}_{st} \\
& =  (P( (\nabla \sigma_k)  P [ (\sigma_j\cdot \nabla)  v_s])), e_m)\mathbb{Z}_{st}^{j,k}  + (P( (\nabla \sigma_k)  P [(\nabla \sigma_j)  v_s])), e_m) \mathbb{Z}^{j,k}_{st} ,
\end{align*}
where we have used that the vector fields are divergence-free. For the remaining terms, we write
$$
(P( (\nabla \sigma_k)  P [ (\sigma_j \cdot  \nabla)  v_s]), e_m) = (P[ (\sigma_j \cdot \nabla)  v_s], \partial_m \sigma_k) = ( (\sigma_j \cdot \nabla ) v_s, \partial_m \sigma_k) = - (v^l_s, \partial_i (\sigma_j^i \partial_m \sigma_k^l) )
$$
and
\begin{align*}
(P( (\nabla  \sigma_k)  P [\nabla \sigma_j  v_s]), e_m) = (P[\nabla \sigma_j v_s], \partial_m \sigma_k) = ( \nabla \sigma_j v_s, \partial_m \sigma_k) .
\end{align*}
Consequently we get that 
$$
\bar{u}^{\natural}_{st} = \delta \bar{u}_{st} - L_{st}^1(v_s) - L_{st}^2(v_s),
$$
where the $m$'th component of $L_{st}^1(v_s)$ and $L_{st}^2(v_s)$ is given by
$$
(v^l_s, \partial_m \sigma_k^l) Z_{st}^k, \quad \textrm{ and} \quad \left( v^l , \partial_n \sigma_j^l \partial_m \sigma_k^n  -  \sigma_j^n \partial_n \partial_m \sigma_k^l  \right)\mathbb{Z}_{st}^{j,k} 
$$
respectively. Notice that we have
\begin{equation} \label{LBounds}
| L^1_{st}(v_s) | \leq C |v_s|_0 \omega_Z(s,t)^{\frac{1}{p}} \quad \textrm{ and} \quad | L^2_{st}(v_s) | \leq C |v_s|_0 \omega_Z(s,t)^{\frac{2}{p}}
\end{equation}
for a constant $C>0$  depending only on $d$ and the bounds on $\sigma_i$ in $\W^{2, \infty}$.

\begin{definition}\label{def:VortSol}
We say that a pair $(\xi,\bar{u})$ is a weak solution of \eqref{eq:vv} up to time $T^*$ if $(\xi,\bar{u}):[0,T^*]\rightarrow \dot{\bH}^0\times \bR^d$ is weakly continuous, $\xi\in L^2_{T^*}\dot{\bH}^{1}\cap L^{\infty}_{T^*}\dot{\bH}^0$, and  $\bar{u}^{\natural}:\Delta_{T^*}\rightarrow \bR^d$ and $\xi^{\natural} : \Delta_{T^*} \rightarrow \dot{\bH}^{-3}$ defined by 
\begin{align}\label{eq:VortRoughForm}
\bar{u}_{st}^{\natural}&:=  \delta \bar{u}_{st} -[L_{st}^{1}+L_{st}^{2}](\operatorname{\BS}\xi_s)\\
\xi_{st}^{\natural}(\phi)&:=  \delta \xi_{st} (\phi ) +  \int_s^t \left[\vartheta(\nabla \xi_r, \nabla   \phi) + \big( [(\operatorname{\BS} \xi_r+\bar{u}_r ) \cdot \nabla ]\xi_r,\phi \big)-\mathbf{1}_{d=3} \big( [\xi_r\cdot \nabla] \operatorname{\BS} \xi_r,\phi \big)\right]\,dr \\
&\qquad  -  \xi_s ([A_{st}^{1,*} +A_{st}^{2,*}]\phi ), \quad \forall \phi \in \bH^3 
\end{align}
satisfy $\xi^{\natural} \in C^{\frac{p}{3}-\textnormal{var}}_{2, \varpi,L}([0,T^*]; \dot{\bH}^{-3})$  and $\bar{u}^{\natural}\in C^{\frac{p}{3}-\textnormal{var}}_{2, \varpi,L}([0,T^*]; \bR^d)$ for some control $\varpi$ and $L> 0$.
\end{definition}

\begin{lemma} \label{lem:EquivalenceDefinitions}
There is a one-to-one correspondence between the solutions defined in Definitions  \ref{def:RNSStrongSol} and  \ref{def:VortSol}.
\end{lemma}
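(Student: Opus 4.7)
The plan is to exhibit the correspondence as the two inverse maps $u \mapsto (\operatorname{curl} u, \bar u)$ and $(\xi, \bar u) \mapsto \operatorname{\BS} \xi + \bar u$, and to verify that each transforms the rough equation of one Definition into that of the other. That these maps are well-defined and inverse at the level of paths in the relevant Sobolev scales follows from the mapping properties listed in Section~\ref{ss:notation}: the operator $\operatorname{curl}$ takes $\bW^{n,2}$ to $\dot{\bH}^{n-1}$, the operator $\operatorname{\BS}$ takes $\dot{\bW}^{n-1,2}$ to $\dot{\bH}^{n}$, and $\operatorname{\BS}\circ\operatorname{curl} = I$ on $\dot{\bH}^{n}$ while $\operatorname{curl}\circ \operatorname{\BS} = I$ on divergence-free mean-free fields.

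\emph{Direction 1 (velocity to vorticity).} I will test \eqref{SystemSolutionU} against two complementary classes of admissible test functions in $\bH^{2}$: the fields $\phi = \operatorname{curl}^{*}\psi$ for $\psi \in \dot{\bH}^{3}$, and the constant fields $e_{m}$. For the former, integration by parts gives $(\delta u_{st}, \operatorname{curl}^{*}\psi) = (\delta\xi_{st},\psi)$ and $(\Delta u_{r}, \operatorname{curl}^{*}\psi) = -(\nabla\xi_{r},\nabla\psi)$; the projected nonlinearity transforms via the identity $\operatorname{curl}((u\cdot\nabla)u) = (u\cdot\nabla)\xi - \mathbf{1}_{d=3}(\xi\cdot\nabla)u$ valid for divergence-free $u$; and the rough-driver contributions transform via $\operatorname{curl}P = \operatorname{curl}$ together with the Lie-derivative commutation $\operatorname{curl}\tilde{\mathcal{L}}_{\sigma_{k}} = \mathcal{L}_{\sigma_{k}}\operatorname{curl}$ on divergence-free fields, which is precisely what gives \eqref{eq:CurlOfURD1}--\eqref{eq:CurlOfURD2}. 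This yields $\xi^{\natural}_{st}(\psi) = u^{\natural}_{st}(\operatorname{curl}^{*}\psi)$, and the boundedness $\operatorname{curl}^{*} : \dot{\bH}^{3} \to \bH^{2}$ transfers the $C^{p/3-\textnormal{var}}_{2,\varpi,L}$-regularity from $\bH^{-2}$ to $\dot{\bH}^{-3}$. Testing against $e_{m}$ annihilates both deterministic integrals (the Laplacian and $B_{P}$ applied to a divergence-free field integrate to zero against constants), and the explicit computation already carried out in the paragraph preceding Definition~\ref{def:VortSol} transforms the rough-driver terms into $L^{1}_{st}(v_{s}) + L^{2}_{st}(v_{s})$ with $v_{s} = \operatorname{\BS}\xi_{s}$, producing the mean equation.

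\emph{Direction 2 (vorticity to velocity).} Given $(\xi, \bar u)$, set $u := \operatorname{\BS}\xi + \bar u$; the regularity and weak continuity of $u$ follow from \eqref{ineq:BSGrad} and the mapping properties of $\operatorname{\BS}$. Define $u^{\natural}_{st}(\phi)$ via the right-hand side of \eqref{SystemSolutionU} for $\phi \in \bH^{2}$ and decompose $\phi = \bar\phi + \tilde\phi$ with $\bar\phi \in \bR^{d}$ constant and $\tilde\phi \in \dot{\bH}^{2}$. Running the computations of Direction~1 in reverse yields
\begin{equation*}
u^{\natural}_{st}(\bar\phi) = \bar u^{\natural}_{st}\cdot\bar\phi \quad \textnormal{and} \quad u^{\natural}_{st}(\tilde\phi) = \xi^{\natural}_{st}(\operatorname{\BS}^{*}\tilde\phi),
\end{equation*}
where the key observation for the second identity is $\operatorname{curl}\operatorname{\BS}^{*} = I$ on divergence-free vector fields, which allows each vector-calculus and commutation identity of Direction~1 to be applied in reverse. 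Since $\operatorname{\BS}^{*} : \dot{\bH}^{2} \to \dot{\bH}^{3}$ is bounded (it gains one derivative), this produces the pointwise bound $|u^{\natural}_{st}|_{\bH^{-2}} \lesssim |\bar u^{\natural}_{st}| + |\xi^{\natural}_{st}|_{\dot{\bH}^{-3}}$, from which the required local $p/3$-variation regularity is inherited from the two hypotheses of Definition~\ref{def:VortSol}.

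The main technical content is the algebraic commutation $\operatorname{curl}\tilde{\mathcal{L}}_{\sigma_{k}} = \mathcal{L}_{\sigma_{k}}\operatorname{curl}$ (and its second-order iterate involving $\mathbb{Z}$) on divergence-free fields, together with $\operatorname{curl}P = \operatorname{curl}$; these are what make the Leray projection appearing in $\boldsymbol{\mathcal{A}}$ disappear upon taking the curl and are responsible for the appearance of the vortex-stretching terms $-\mathbf{1}_{d=3}(\xi\cdot\nabla)\sigma_{k}$ in $\mathbf{A}$. Once these identities are in hand, everything else is careful bookkeeping of integration by parts and of the divergence-free condition, which automatically holds for every vector field in play thanks to $u = \operatorname{\BS}\xi + \bar u$ being divergence-free by construction.
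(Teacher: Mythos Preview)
Your proof is correct and follows essentially the same strategy as the paper's: both directions rest on the identities $\operatorname{curl}P=\operatorname{curl}$, $\operatorname{curl}\tilde{\mathcal L}_{\sigma_k}=\mathcal L_{\sigma_k}\operatorname{curl}$ on divergence-free fields, and the inverse relations between $\operatorname{curl}$ and $\operatorname{\BS}$. The only cosmetic difference is that the paper applies $\operatorname{\BS}$ directly to the vorticity equation (operator side) and then adds the mean equation, whereas you apply $\operatorname{curl}^{*}$ and $\operatorname{\BS}^{*}$ to the test functions (dual side); these are equivalent by adjunction. Your dual phrasing has the mild advantage that the bound $|u^{\natural}_{st}|_{\bH^{-2}}\lesssim |\bar u^{\natural}_{st}|+|\xi^{\natural}_{st}|_{\dot{\bH}^{-3}}$ is read off directly from $\operatorname{\BS}^{*}:\dot{\bH}^{2}\to\dot{\bH}^{3}$, while the paper's written proof states the recovered velocity equation only for $\phi\in\bH^{3}$ (though $v^{\natural}=\operatorname{\BS}\xi^{\natural}\in\dot{\bH}^{-2}$ follows immediately from $\operatorname{\BS}:\dot{\bH}^{-3}\to\dot{\bH}^{-2}$).

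One small slip: in dimension two the composition ``$\operatorname{curl}\operatorname{\BS}^{*}$'' is not well-formed, since $\operatorname{\BS}^{*}=(-\Delta)^{-1}\operatorname{curl}$ lands in scalars while $\operatorname{curl}$ takes vector fields. What you actually need (and use) is $\operatorname{curl}^{*}\operatorname{\BS}^{*}=I$ on $\dot{\bH}^{n}$, which is simply the adjoint of the identity $\operatorname{\BS}\circ\operatorname{curl}=I$ on $\dot{\bH}^{n}$ recorded in Section~\ref{ss:notation}; in $d=3$ your statement is literally correct since $\operatorname{curl}$ is self-adjoint there.
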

\begin{proof}

If $u$ is a strong solution of \eqref{eq:RNSDiffForm}, then it is clear from the above  that by defining 
$$
\xi=\nabla \times u, \quad \xi^{\natural}=\nabla \times u^{\natural},\quad \bar{u}^{\natural}= (u^{\natural}(e_m))_{m=1}^d,
$$
we obtain a solution in the sense of Definition \ref{def:VortSol}.  For the reverse direction, define 
$$
v=\operatorname{\BS} \xi, \quad v^{\natural}=\operatorname{\BS} \xi^{\natural}, \quad  u=\bar{u}+v, \quad u^{\natural}=v^{\natural}+\bar{u}^{\natural}.
$$
Since $\BS$ is linear and commutes with derivatives, we have
$$
\delta  v =\operatorname{\BS}\delta \xi,\quad  \Delta v=\operatorname{\BS} \Delta \xi, \quad (\bar{u}\cdot \nabla )v=\operatorname{\BS} [(\bar{u}\cdot \nabla )\xi].
$$
Notice that 
$$
\operatorname{curl}(P[ (v\cdot \nabla) v])=v\cdot \nabla\xi-\mathbf{1}_{d=3} (\nabla v) \xi
$$
and
$$
\operatorname{curl} \big([\mathcal{A}^{1}_{st}+\mathcal{A}^{2}_{st}] u-[L_{st}^1+L_{st}^2](v) \big) =[A^{1}_{st}+A^{2}_{st}]\xi,
$$
where both arguments of the curl are divergence and mean-free.
Thus, since $\operatorname{\BS}\circ \operatorname{curl}$ is the identity on the space of divergence and mean-free test functions, we have
$$
P[(v\cdot \nabla) v] =\operatorname{\BS} [(v\cdot\nabla) \xi-\mathbf{1}_{d=3}( \nabla v) \xi]
$$
$$
[\mathcal{A}^{1}_{st}+\mathcal{A}^{2}_{st}] u-[L_{st}^1+L_{st}^2](v)=\operatorname{\BS} ([A^{1}_{st}+A^{2}_{st}]\xi).
$$
Therefore, applying $\operatorname{\BS}$ to the vorticity equation, we get for all  $\phi \in \bH^3$ and $(s,t)\in \Delta_T$ 
\begin{align}
v_{st}^{\natural}(\phi)&:=  \delta v_{st} (\phi ) +  \int_s^t \left[\vartheta(\nabla v_r, \nabla   \phi) + B_P(v_r+\bar{u}_r,v_r)(\phi)\right]\,dr \\
&\qquad   -  u_s ([\mathcal{A}_{st}^{1,*} +\mathcal{A}_{st}^{2,*}]\phi )+[L_{st}^1+L_{st}^2](v_s)(\phi),
\end{align}
and hence for all  $\phi \in \bW^{3,2}$ and $(s,t)\in \Delta_T$ 
\begin{align}
u_{st}^{\natural}(\phi)&=  \delta u_{st} (\phi ) +  \int_s^t \left[\vartheta(\nabla u_r, \nabla   \phi) + B_P(u_r,u_r)(\phi)\right]\,dr -  u_s ([\clA_{st}^{1,*} +\clA_{st}^{2,*}]\phi ).
\end{align}
\end{proof}

\subsection{Main results} \label{sec:MainResults}

Our main results concern existence and uniqueness of solutions, stability with respect to the given data including  the driving signal, and the existence of a random dynamical system generated by the solution. Let us begin with the precise formulation of  the  existence result.

\subsubsection{Existence and uniqueness}
\begin{theorem} \label{existenceThmNoGalerkin}
Let $d\in \{2,3\}.$  Assume that  $\sigma_k \in \W^{2, \infty}_{\Div}$ for each $k\in \{1,\ldots,K\}$. For a given $u_0\in \bH^1$ and $\mathbf{Z}\in  \mathcal{C}^{p-\textnormal{var}}_g([0,T];\bR^K)$,  there exists a time $T^*$ depending only on $\omega_Z$, $|\sigma|_{2, \infty}$ and $|u_0|_1$, and a strong solution up to time $T^*$ of \eqref{eq:RNSDiffForm} satisfying the energy inequality
\begin{equation} \label{VelocityEnergy}
\sup_{ t \in [0,T^*]} |u_t|^2_1  +  \int_0^{T^*} |\nabla^2 u_r|_0^2\,dr \le F(|u_0|_1),
\end{equation}
for a continuous function $F : \bR_+\rightarrow \bR_+$. Moreover, $u\in C^{p-\textnormal{var}}([0,T^*];\bH^{0})$. When $d = 2$, the final time $T^*$ can be taken to be $T$. 
\end{theorem}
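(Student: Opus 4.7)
The plan is to construct solutions via Galerkin approximation, derive uniform a priori estimates through the vorticity formulation, and then use compactness to pass to the limit. By Lemma \ref{lem:EquivalenceDefinitions}, it suffices to construct a solution $(\xi,\bar u)$ in the sense of Definition \ref{def:VortSol} and recover $u$ by Biot--Savart; this bypasses the Helmholtz projection issue discussed in Section \ref{sec:Related}. Concretely, let $\{e_j\}$ be the eigenbasis of the Stokes operator on $\bH^0$ and $P_N$ the projection onto $\mathrm{span}\{e_1,\dots,e_N\}$. For each $N$, we consider the Galerkin equation
\begin{equation}
d u^N_t + P_N B_P(u^N_t)\,dt = \vartheta P_N \Delta u^N_t\,dt + P_N\tilde{\mathcal L}_{\sigma_k} u^N_t\, d\mathbf{Z}^k_t,\qquad u^N_0 = P_N u_0,
\end{equation}
understood as a finite-dimensional rough differential equation driven by $\mathbf Z$. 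Standard rough ODE theory gives global existence at the Galerkin level for every smooth approximation of $\mathbf Z$, and hence for $\mathbf Z$ itself by rough path stability.

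The crux is to obtain uniform-in-$N$ bounds in $L^\infty_{T^*}\bH^1\cap L^2_{T^*}\bH^2$. We pass to the vorticity $\xi^N:=\nabla\times u^N$, which satisfies the rough equation derived just before Definition \ref{def:VortSol}, with the projected driver $(\mathbf{A}^1,\mathbf{A}^2)$ intertwined through $\nabla\times$. Testing against $\xi^N$ would only formally yield the enstrophy identity \eqref{eq:enst est intro}, because $A^1$ is unbounded; the rigorous enstrophy bound is obtained by applying Theorem \ref{Thm2.5Abstract} to the tensorized object $\xi^N\otimes\xi^N$, which satisfies a rough equation driven by the symmetrized unbounded driver. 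The key cancellation is $(\clL_{\sigma_k}\xi,\xi)=0$ for divergence-free $\sigma_k$ in $d=2$ (scalar transport), while in $d=3$ the vortex-stretching terms $(\xi\cdot\nabla)u$ and $[\sigma_k,\xi]\cdot\xi$ produce non-vanishing contributions. Controlling these in $d=3$ via Gagliardo--Nirenberg (roughly $|\xi|_{L^3}^3\lesssim|\xi|_0^{3/2}|\nabla\xi|_0^{3/2}$) and absorbing $\vartheta|\nabla\xi|_0^2$ yields only a Riccati-type inequality for $|\xi|_0^2$; this determines the local existence time $T^*=T^*(\omega_Z,|\sigma|_{2,\infty},|u_0|_1)$. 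In $d=2$ the same procedure yields the genuine enstrophy inequality without nonlinear feedback, so one may take $T^*=T$. A parallel argument for $\bar u^N$ using \eqref{LBounds} gives uniform bounds on the mean.

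With uniform $L^\infty_{T^*}\dot\bH^0\cap L^2_{T^*}\dot\bH^1$ bounds on $\xi^N$, the equation \eqref{eq:VortRoughForm} together with the remainder bound from Theorem \ref{Thm2.5Abstract} forces $\xi^N$ to have uniformly controlled time regularity in a negative Sobolev space: writing
\begin{equation}
\delta\xi^N_{st} = -\int_s^t[\vartheta(-\Delta)\xi^N_r + \mathrm{(nonlinear)}]\,dr + (A^1_{st}+A^2_{st})\xi^N_s + \xi^{N,\natural}_{st},
\end{equation}
the three contributions are controlled in $\dot\bH^{-1}$, $\dot\bH^{-2}$, and $\dot\bH^{-3}$ respectively with explicit $p$-variation rates. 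Combined with the spatial $\dot\bH^1$ estimate, Aubin--Lions compactness yields a subsequence converging strongly in $L^2_{T^*}\dot\bH^0$, which suffices to pass to the limit in the nonlinear terms $([(\operatorname{\BS}\xi^N+\bar u^N)\cdot\nabla]\xi^N,\phi)$ and $(\xi^N\cdot\nabla\operatorname{\BS}\xi^N,\phi)$. The rough driver terms pass to the limit by linearity, and by the uniform remainder bounds the limiting $\xi^\natural$ inherits membership in $C^{p/3-\mathrm{var}}_{2,\varpi,L}([0,T^*];\dot\bH^{-3})$ via a standard weak-$*$/sewing argument.

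Finally, we recover the velocity by $u_t = \bar u_t + \operatorname{\BS}\xi_t$; the enstrophy estimate then reads as \eqref{VelocityEnergy} through \eqref{ineq:BSGrad}, and the $C^{p-\mathrm{var}}([0,T^*];\bH^0)$ regularity follows directly from the decomposition \eqref{SystemSolutionU} together with the bounds on the drift, the driver, and the remainder. The main obstacle, in my view, is making the enstrophy estimate rigorous in the rough setting: $\xi\mapsto|\xi|_0^2$ is not a smooth functional on the scale $(\dot\bH^n)_n$ in the sense required by a direct rough Itô formula, so one must tensorize and apply Theorem \ref{Thm2.5Abstract} to $\xi\otimes\xi$, carefully identifying the effective driver as the symmetric part of $\mathbf A\otimes\mathrm{id}+\mathrm{id}\otimes\mathbf A$; in $d=3$ additionally closing the Riccati-type estimate in a way that yields a deterministic lower bound $T^*>0$ requires careful tracking of the dependence of constants on $\omega_Z$ through the unbounded rough driver framework.
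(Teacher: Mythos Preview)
Your proposal is essentially correct and follows the same overall strategy as the paper: Galerkin approximation, vorticity formulation to obtain uniform enstrophy-type bounds, compactness, and passage to the limit. The identification of the key obstruction (rigorous enstrophy estimates via tensorization, and the Riccati/Bihari-type closure in $d=3$) is exactly right.

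There is one methodological difference worth pointing out. You frame the Galerkin system as a genuine rough ODE driven by $\mathbf Z$, with the enstrophy estimate obtained by tensorizing $\xi^N\otimes\xi^N$ and invoking Theorem~\ref{Thm2.5Abstract} in both dimensions. The paper instead couples the Galerkin truncation to a \emph{smooth} approximation $z^N$ of the geometric rough path $\mathbf Z$ (this is where geometricity is used). This buys something concrete in $d=2$: since $z^N$ is smooth and the scalar transport noise $(\sigma_k\cdot\nabla)\xi^N$ is exactly enstrophy-conservative, one simply tests \eqref{eq:SmoothVor} by $\xi^N$ and obtains $|\xi^N_t|_0^2+2\vartheta\int_0^t|\nabla\xi^N_r|_0^2\,dr=|L_N\xi_0|_0^2$ with no rough-path machinery at all. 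Tensorization at the Galerkin level is only invoked in $d=3$, precisely because the stretching term $(\nabla\sigma_k)\xi^N$ destroys this cancellation and one needs estimates uniform in the smoothness of $z^N$; the resulting differential inequality is then closed by a rough Bihari--LaSalle lemma (Lemma~\ref{lem:RoughBihari}), which is your ``Riccati-type inequality.'' Your approach would work, but is more elaborate than necessary in $d=2$.

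A second minor point: for compactness the paper does not use Aubin--Lions directly but rather a $p$-variation compactness criterion (Lemma~A.2 in \cite{HLN}), fed by the uniform bound on $|\delta u^N_{st}|_0$ coming from Lemma~\ref{AprioriVariation}. This is the same mechanism you sketch (``uniformly controlled time regularity in a negative Sobolev space''), just packaged through $p$-variation rather than fractional time Sobolev spaces.
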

The proof of existence of a solution is a consequence of the stronger statement in Theorem \ref{existenceThm} presented in Section \ref{Section:Galerkin}. The theorem is based on a suitable Galerkin approximation, combined with an approximation of the driving signal $z$ by smooth paths. The fact that $u \in C^{p-\textnormal{var}}([0,T^*];\bH^{0})$ follows from the a priori result in Lemma \ref{AprioriVariation}. 

The next result shows how to construct the pressure from the velocity field. The proof of this statement can be found in  Section \ref{sec:AprioriEst}, page \pageref{p}.

\begin{lemma} \label{lem:PressureRecovery}
Given a strong solution $u$ of \eqref{eq:RNSDiffForm} up to time $T^*$,  the pressure $\pi := \int_0^{\cdot} \nabla p_r dr$ can be recovered. More precisely, there exists $\pi \in C^{p-\textnormal{var}}([0,T^*];\bH^{-2}_{\perp})$ satisfying 
\begin{align}
\delta \pi_{st}+\int_s^t Q[ (u_r \cdot \nabla ) u_r)]\,dr&= [\clA_{st}^{Q,1}+\clA_{st}^{Q,2}] u_s + u_{st}^{Q, \natural},  \label{eq:RNSRoughFormPi}
\end{align}
where   
\begin{align}
\clA^{Q,1}_{st}\phi :=    Q [(\sigma_k\cdot \nabla +\nabla \sigma_k )\phi]  \, Z_{st}^{k}, \quad
\clA^{Q,2}_{st}\phi :=   Q[(\sigma_k\cdot \nabla +\nabla \sigma_k  )P[(\sigma_l\cdot \nabla +\nabla \sigma_l  )\phi]] \mathbb{Z}^{l,k}_{st} \notag,
\end{align}
and $u^{Q,\natural} \in C^{\frac{3}{p} - var}_{2, \varpi,L}([0,T^*] ; \bH_{\perp}^{-2})$ and $\pi \in C^{p-\textnormal{var}}([0,T];\bH^{-2}_{\perp})$.
\end{lemma}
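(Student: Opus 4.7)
The plan is to apply the Helmholtz decomposition to the formal unprojected equation and then make rigorous the resulting rough integral via a sewing argument. On a formal level, applying $Q=I-P$ to \eqref{eq:RNSDiffForm} and using $Qu=0$, $Q\Delta u=0$, $Q\nabla p=\nabla p$ yields $\nabla p_t = -Q[(u_t\cdot\nabla)u_t] + Q\tilde{\mathcal{L}}_{\sigma_k} u_t\,\dot z^k_t$, so $\pi_t=\int_0^t \nabla p_r\,dr$ should satisfy
$$\delta\pi_{st} \;=\; -\int_s^t Q[(u_r\cdot\nabla)u_r]\,dr \;+\; \int_s^t Q\tilde{\mathcal{L}}_{\sigma_k} u_r \,dz^k_r,$$
where the last term must be interpreted as a rough integral. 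Reading off the controlled-path structure of $u$ from \eqref{SystemSolutionU} (whose ``Gubinelli derivative'' in the direction $z^l$ is $P\tilde{\mathcal{L}}_{\sigma_l}u$), the natural local expansion of this integral is $Q\tilde{\mathcal{L}}_{\sigma_k} u_s\, Z^k_{st} + Q\tilde{\mathcal{L}}_{\sigma_k} P\tilde{\mathcal{L}}_{\sigma_l} u_s\, \mathbb{Z}^{l,k}_{st}$, which is precisely $[\clA^{Q,1}_{st}+\clA^{Q,2}_{st}]u_s$.

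The core computation is to verify the sewing coherence bound for the candidate two-index map $\Xi_{st}:=[\clA^{Q,1}_{st}+\clA^{Q,2}_{st}]u_s - \int_s^t Q[(u_r\cdot\nabla)u_r]\,dr$. A direct calculation using Chen's relations $\delta Z_{s\theta t}=0$ and $\delta\mathbb{Z}_{s\theta t}=Z_{s\theta}\otimes Z_{\theta t}$, together with additivity of the Bochner integral, produces cross-terms involving $\delta u_{s\theta}$. Substituting the expansion for $\delta u_{s\theta}$ supplied by \eqref{SystemSolutionU}, the first-order piece $\clA^1_{s\theta}u_s = P\tilde{\mathcal{L}}_{\sigma_l}u_s\, Z^l_{s\theta}$ generates a contribution that cancels exactly against the Chen correction $Z^l_{s\theta}Z^k_{\theta t}$, leaving
$$\delta\Xi_{s\theta t} \;=\; -Q\tilde{\mathcal{L}}_{\sigma_k}\bigl[\delta\mu_{s\theta}+\clA^2_{s\theta}u_s + u^{\natural}_{s\theta}\bigr]Z^k_{\theta t} \;-\; Q\tilde{\mathcal{L}}_{\sigma_k} P\tilde{\mathcal{L}}_{\sigma_l}\,\delta u_{s\theta}\,\mathbb{Z}^{l,k}_{\theta t},$$
where $\mu$ is the drift in \eqref{eq:DriftInH0}. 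Each summand admits a control of exponent at least $1+\tfrac1p\ge\tfrac3p$ (using $p\ge 2$), built out of $\omega_Z$, $\omega_\mu$, and $\omega_{u^\natural}$.

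With the coherence bound in hand, the sewing lemma (or an analogue of Theorem \ref{Thm2.5Abstract} in the present scale) produces a unique path $\pi$ and a remainder $u^{Q,\natural}$ of $p/3$-variation satisfying \eqref{eq:RNSRoughFormPi}. The regularity $\pi\in C^{p-\textnormal{var}}([0,T^*];\bH^{-2}_{\perp})$ then follows by reading off the exponents of the four terms on the right-hand side of \eqref{eq:RNSRoughFormPi}: $\omega_Z^{1/p}$ for $\clA^{Q,1}_{st}u_s$, $\omega_Z^{2/p}$ for $\clA^{Q,2}_{st}u_s$, a $1$-variation bound for the drift, and $\omega^{3/p}$ for $u^{Q,\natural}$, each of which is dominated by a $1/p$-exponent control. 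The main technical obstacle I anticipate is pinning down the correct functional scale on which the sewing takes place: since $\tilde{\mathcal{L}}_{\sigma_k}$ is first-order, naive estimation pushes $\tilde{\mathcal{L}}_{\sigma_k}u^{\natural}_{s\theta}$ into $\bH^{-3}_{\perp}$ rather than $\bH^{-2}_{\perp}$, so landing the remainder in the space claimed by the lemma requires exploiting the a priori regularity $u\in L^2_{T^*}\bH^2\cap L^\infty_{T^*}\bH^1$ along with the continuity of $Q$ and of the smoothing operators on $(\bH^n)_n$ to absorb the derivative loss.
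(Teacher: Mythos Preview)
Your approach is essentially the same as the paper's: define the germ $[\clA^{Q,1}_{st}+\clA^{Q,2}_{st}]u_s$ (the paper omits the drift from the germ and adds it back afterwards, but since the Bochner integral is additive this is cosmetic), compute its $\delta$, and sew. Your computation of $\delta\Xi_{s\theta t}$ is correct and coincides with the paper's once you notice that $\delta\mu_{s\theta}+\clA^2_{s\theta}u_s+u^\natural_{s\theta}$ is exactly the intermediate remainder $u^\sharp_{s\theta}:=\delta u_{s\theta}-\clA^1_{s\theta}u_s$ introduced in \eqref{eq:defin of sharp}, so that
\[
\delta\Xi_{s\theta t}=-\clA^{Q,1}_{\theta t}u^\sharp_{s\theta}-\clA^{Q,2}_{\theta t}\,\delta u_{s\theta}.
\]

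The gap is precisely the ``technical obstacle'' you flag at the end, and your proposed fix is too vague to close it. If you estimate the three constituents $\delta\mu$, $\clA^2 u$, $u^\natural$ of $u^\sharp$ separately, the term $Q\tilde{\mathcal{L}}_{\sigma_k}u^\natural_{s\theta}\,Z^k_{\theta t}$ indeed lands only in $\bH^{-3}_\perp$, and no amount of smoothing applied \emph{after} this loss will recover $\bH^{-2}_\perp$. Similarly, for $\clA^{Q,2}_{\theta t}\,\delta u_{s\theta}$ you need a genuine $p$-variation control on $\delta u$ in $\bH^0$, which does not follow from the controls $\omega_Z,\omega_\mu,\omega_{u^\natural}$ you list (boundedness of $u$ in $\bH^1$ gives only a trivial, non-vanishing bound).

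The resolution, which the paper carries out in Lemmas~\ref{AprioriVariation} and~\ref{AprioriVariation1}, is to treat $\delta u$ and $u^\sharp$ as single objects and interpolate between their two representations using the smoothing operators $J^\eta$. For $u^\sharp$ one uses the expression $\delta u-\clA^1 u$ (good in space since $u\in L^\infty_T\bH^1$, poor in time) against $(I-J^\eta)\phi$, and the expression $\delta\mu+\clA^2 u+u^\natural$ (good in time, poor in space) against $J^\eta\phi$; optimizing in $\eta$ yields $u^\sharp\in C^{p/2\text{-}\mathrm{var}}_2(\bH^{-1})$. The analogous interpolation gives $u\in C^{p\text{-}\mathrm{var}}(\bH^0)$. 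With these two estimates in hand, $\clA^{Q,1}_{\theta t}u^\sharp_{s\theta}$ and $\clA^{Q,2}_{\theta t}\delta u_{s\theta}$ both lie in $\bH^{-2}_\perp$ with the required $3/p$ exponent, and the sewing lemma applies exactly as you describe.
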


\begin{remark} \label{remark:PressureRecovery}
In the lemma above, as in Remark \ref{rem:URDtoIntegral}, we note that $[\clA_{st}^{Q,1}+\clA_{st}^{Q,2}] u_s + u_{st}^{Q, \natural}$ should be thought of as the rough integral $ Q \int_s^t (\sigma_k\cdot \nabla +\nabla \sigma_k ) u_r \,d z_r^k$. Thus, adding $u$ and $\pi$ and using that $P + Q = I$ gives that
$$
\delta u_{st} + \delta \pi_{st} = \int_s^t [ \vartheta \Delta u_r - (u_r \cdot \nabla)u_r ] dr + \int_s^t (\sigma_k\cdot \nabla +\nabla \sigma_k ) u_r \,d z_r^k.
$$
We also remark that the pair $(\clA^{Q,1}, \clA^{Q,2})$ is, in general, \emph{not} an unbounded rough driver on the scale $(\bH_{\perp}^{n})_n$, because it fails to satisfy Chen's relation \eqref{eq:chen-relation}. Nevertheless, we have
\begin{equation} \label{quasiChen}
\delta \clA_{s \theta t}^{Q,2} = \clA_{\theta t}^{Q,1} \clA_{s \theta}^{1},\quad\text{for all}\quad  (s,\theta,t)\in\Delta^{(2)}_T,
\end{equation}
which  is the correct Chen's relation for the system of equations \eqref{eq:RNSRoughFormU} and \eqref{eq:RNSRoughFormPi} needed to recover the pressure from $u$, see the proof of Lemma \ref{lem:PressureRecovery}.
\end{remark}

In dimension two, we obtain  classical enstrophy balance and uniqueness.

\begin{theorem} \label{thm:UniqandEnEq2d}
In dimension two, for a given $u_0\in \bH^1$ and $\mathbf{Z}\in  \mathcal{C}^{p-\textnormal{var}}_g([0,T];\bR^K)$ and $\sigma_k \in \bW^{3, \infty}_{\Div}$ there is at most one strong solution to \eqref{eq:RNSDiffForm}. Moreover, the velocity  belongs to $ C_T\bH^1$, the following enstrophy balance holds
\begin{equation} \label{eq:EnergyEquality}
\sup_{ t \in [0,T]} |\xi_t|^2_0 +2\vartheta \int_0^T |\nabla \xi_r|_0^2\,dr= |\xi_0|_0^2,
\end{equation}
and the velocity satisfies the energy estimate \eqref{VelocityEnergy} for any $T^*\leq T$.
\end{theorem}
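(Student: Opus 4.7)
The plan is to work in the vorticity formulation, which in $d=2$ is simpler because the vortex-stretching terms are absent. By Lemma \ref{lem:EquivalenceDefinitions}, any strong solution $u$ corresponds to a pair $(\xi,\bar u)$ with $\xi\in L^\infty_T\dot\bH^0\cap L^2_T\dot\bH^1$ satisfying \eqref{eq:VortRoughForm}, where $u=\bar u+\BS\xi$ and $\bar u$ is determined by a linear rough ODE driven by $\xi$. Thus both claims reduce to an enstrophy balance and a uniqueness statement for the scalar rough transport-diffusion
\begin{equation*}
\partial_t \xi + (u\cdot \nabla)\xi + (\sigma_k \cdot \nabla)\xi \, \dot z^k = \vartheta \Delta \xi.
\end{equation*}

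At the formal level, testing by $\xi$ gives $\tfrac{d}{dt}|\xi|_0^2 = -2\vartheta|\nabla\xi|_0^2$, since $b(u,\xi,\xi)=0$ by \eqref{eq:B prop} and $(\sigma_k\cdot\nabla)\xi \cdot \xi = \tfrac12 \nabla\cdot(\sigma_k \xi^2)$ integrates to zero by divergence-freeness. To make this rigorous at the available regularity, I would use the tensorization technique of Bailleul--Gubinelli: consider $\Xi_t=\xi_t\otimes\xi_t$ on $\bT^2\times\bT^2$, which satisfies a rough PDE on the scale $(\dot\bH^n(\bT^2\times\bT^2))_n$ with symbols acting by the Leibniz rule on each factor, and remainder $\Xi^\natural$ controlled by Theorem \ref{Thm2.5Abstract}. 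Testing against a smooth diagonal approximation $\rho_\ep(x-y)\varphi\big(\tfrac{x+y}{2}\big)$, both rough contributions vanish on the diagonal: the first-order one immediately from divergence-freeness of $\sigma_k$, the second-order one after integration by parts, crucially using that $\bZ$ is geometric so that the symmetric part of $\mathbb Z^{l,k}$ equals $\tfrac12 Z^l Z^k$. Passing to the limit $\ep\to 0$ produces \eqref{eq:EnergyEquality}; continuity $\xi\in C_T\dot\bH^0$ then follows from continuity of $t\mapsto|\xi_t|_0$ combined with weak continuity, and $u=\bar u+\BS\xi\in C_T\bH^1$ follows from continuity of $\BS:\dot\bH^0\to\dot\bH^1$ and of $\bar u$.

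For uniqueness, take two solutions $(\xi^i,\bar u^i)$ with the same initial data and set $w=\xi^1-\xi^2$, $\bar w=\bar u^1-\bar u^2$. Then $w$ satisfies the same linear rough driver as $\xi$ plus a deterministic inhomogeneity $-[(\BS w+\bar w)\cdot\nabla]\xi^2$ coming from the nonlinearity. Tensorizing $w\otimes w$ exactly as above, the same pair of cancellations annihilates the rough contributions on the diagonal, leaving
\begin{equation*}
|w_t|_0^2 + 2\vartheta \int_0^t |\nabla w_r|_0^2\,dr \lesssim \int_0^t \big(|w_r|_0+|\bar w_r|\big) \, |\nabla\xi^2_r|_0\, |w_r|_0 \,dr,
\end{equation*}
by Ladyzhenskaya's inequality for the trilinear form in $d=2$, combined with $\BS:\dot\bH^0\to\dot\bH^1$. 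Coupling this estimate with the linear rough ODE for $\bar w$ (whose drift is bounded by $|w|_0$) and using $\nabla\xi^2\in L^2_T\bL^2$ from \eqref{VelocityEnergy}, Gronwall's inequality forces $w\equiv 0$ and $\bar w\equiv 0$.

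The hardest step is the meticulous verification of the second-order rough cancellation in the tensorized driver. While $\tilde A^1_{st}(\xi\otimes\xi)$ restricted to the diagonal manifestly vanishes by divergence-freeness, Chen's relation forces $\tilde A^2_{st}$ to be $A^2\otimes I + I\otimes A^2 + A^1\otimes A^1$, which a priori produces cross terms pairing against the antisymmetric part of $\mathbb Z^{l,k}$; these cancel only after invoking geometricity of $\bZ$ and an additional integration by parts on $(\sigma_k\cdot\nabla)\xi\cdot(\sigma_l\cdot\nabla)\xi$. Carrying out this bookkeeping within the unbounded-rough-driver framework of \cite{BaGu15,DeGuHoTi16} on the doubled torus is the main technical task; once done, both the enstrophy identity and the Gronwall estimate follow routinely.
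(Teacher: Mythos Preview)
Your proposal is correct and follows essentially the same route as the paper: vorticity formulation via Lemma~\ref{lem:EquivalenceDefinitions}, doubling of variables for $\xi\otimes\xi$, a blow-up/diagonal argument to obtain the equation for $\xi^2$, then $A^{i,*}1=0$ by divergence-freeness to get \eqref{eq:EnergyEquality}; uniqueness by the same tensorization applied to $\xi^{(1)}-\xi^{(2)}$ together with the rough ODE for $\bar u^{(1)}-\bar u^{(2)}$ and Gronwall.

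Two small technical points where the paper differs from your sketch. First, the blow-up is not carried out on the Hilbert scale $(\dot\bH^n(\bT^2\times\bT^2))_n$ but on a tailored $L^\infty$-based scale $\mathcal{E}^n_\nabla$ (functions periodic in $x_+$ and compactly supported in $x_-$); this is what the renormalization estimates of \cite{BaGu15,DeGuHoTi16} require, and the paper has to build smoothing operators on this scale by hand. Second, the paper does not verify the second-order diagonal cancellation directly. Instead it invokes the cited renormalization result to get that $(\Gamma^{1,\epsilon},\Gamma^{2,\epsilon})$ is a \emph{uniformly bounded} family of unbounded rough drivers on $\mathcal{E}^n_\nabla$ (this is where geometricity enters), applies Theorem~\ref{Thm2.5Abstract} for a uniform remainder bound, passes to the limit to obtain an equation for $\xi^2$ driven by the \emph{one-variable} pair $(A^1,A^2)$, and only then observes $A^{i,*}1=0$. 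Your description of the cancellation is morally equivalent, just organized differently. Finally, in your uniqueness estimate the pairing $((\BS w+\bar w)\cdot\nabla\xi^{(2)},w)$ cannot quite be bounded by $(|w|_0+|\bar w|)\,|\nabla\xi^{(2)}|_0\,|w|_0$; the paper rewrites the trilinear term as $-(u\cdot\nabla w,\xi^{(2)})$ and uses $L^4\times L^2\times L^4$ plus Young's inequality to absorb the resulting $|\nabla w|_0$ into the dissipation, which is the clean way to close the Gronwall loop.
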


Theorem \ref{thm:UniqandEnEq2d} will follow from Theorem \ref{theorem:EnergyEquality} and Theorem \ref{thm:contractiveTheorem} presented in Section \ref{section:ProofOfTheoremUniqueness}.  

\begin{remark}
Except in the case when $\sigma_j$ is constant in space, there is no reason to believe that one could obtain energy \emph{equality} for the velocity $u$, since the multiplicative term $(\nabla \sigma_k) u \dot{z}^k$ will add energy to the system. 
\end{remark}

\subsubsection{Stability}

Owing to Theorem \ref{existenceThmNoGalerkin} and Theorem \ref{thm:UniqandEnEq2d}, in dimension two, there exists a solution map $\Gamma$  that maps  every   initial condition $u_0\in \bH^1$,  family of divergence-free vector fields  $\sigma_k \in \W^{3, \infty}_{\Div} $, $k\in \{1,\ldots,K\}$, and  continuous geometric $p$-rough path $\bZ=(Z,\mathbb{Z})$
to a unique strong solution $u$  of  \eqref{eq:RNSDiffForm}. Let us denote by $\bH^{1}_{w}$ the space $\bH^1$ equipped with its weak topology.
The following stability result is proved in Section \ref{s:stab}. 

\begin{corollary}\label{cor:stability}
In dimension two, the solution map 
\begin{align*}
\Gamma:\bH^1\times \big(\W^{3, \infty}_{\emph{div}} \big)^K \times \mathcal{C}^{p-\textnormal{var}}_g([0,T];\bR^K)&\to L^2_T\bH^1\cap C_T\bH_w^1 \cap C_T \bH^0 \\
(u_0,\sigma,\bZ)&\mapsto u
\end{align*}
is continuous. 

In particular,  the following Wong-Zakai result holds true. Let $\{B^n\}$ be a piecewise linear interpolation of a Brownian motion $B$, and for each $n$, denote by $u^n $ the unique strong solution of \eqref{eq:RNSDiffForm} with $\dot{z}$ is replaced by $\dot{B}^n$, existence of which is guaranteed by Theorem \ref{thm:UniqandEnEq2d} and Theorem \ref{existenceThmNoGalerkin}. Then $\{u^n\}$ converges almost surely to $u$ in $L^2_T\bH^1\cap C_T\bH_w^1 \cap C_T \bH^0$ where $u$ is the strong probabilistic, pathwise unique solution of 
\begin{align*}
du_t + \nabla p_t dt  & = [ \vartheta \Delta u_t - (u_t \cdot \nabla) u_t ] dt + [( \sigma_k \cdot \nabla) + (\nabla \sigma_k )]  u_t \circ dB^k_t \\
\Div u_t & = 0, \quad u_t|_{t = 0} = u_0 \in \bH^1 .
\end{align*}
constructed in \cite[Theorem 2.1]{mikulevicius2005global} for the more general case of $u_0 \in \bH^0$. 

In particular, the energy estimate \eqref{VelocityEnergy} is satisfied for solutions corresponding to almost all sample paths of the Brownian motion. 
\end{corollary}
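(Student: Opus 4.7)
The plan is to establish continuity of $\Gamma$ by the usual \emph{compactness + uniqueness} scheme, and then deduce the Wong--Zakai statement from continuity of the canonical rough path lift of piecewise linear approximations of Brownian motion.

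Fix a sequence $(u_0^n,\sigma^n,\bZ^n)\to(u_0,\sigma,\bZ)$ in $\bH^1\times (\W^{3,\infty}_{\Div})^K\times \mathcal{C}^{p-\textnormal{var}}_g([0,T];\bR^K)$, and let $u^n=\Gamma(u_0^n,\sigma^n,\bZ^n)$, which exists by Theorem \ref{existenceThmNoGalerkin} and is unique by Theorem \ref{thm:UniqandEnEq2d}. Since $\omega_{Z^n}$ is uniformly bounded and $|u_0^n|_1$ is bounded, the enstrophy balance \eqref{eq:EnergyEquality} combined with the energy estimate \eqref{VelocityEnergy} yields a uniform bound on $\{u^n\}$ in $L^\infty_T\bH^1\cap L^2_T\bH^2$, and hence on $\{\xi^n\}$ in $L^\infty_T\dot\bH^0\cap L^2_T\dot\bH^1$. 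Writing the rough equation \eqref{SystemSolutionU} for $u^n$ against a test function $\phi\in\bH^2$, the drift is bounded in $C^{1-\textnormal{var}}_T\bH^{-0}$ uniformly in $n$, the rough contribution $[\clA^{1,n}_{st}+\clA^{2,n}_{st}]u_s^n$ is controlled uniformly via the bounds $|\sigma^n|_{2,\infty}\lesssim 1$ and \eqref{ineq:Leraycontrolestimate}, and Theorem \ref{Thm2.5Abstract} gives a uniform bound on $u^{n,\natural}$ in $C^{\frac{p}{3}-\textnormal{var}}_{2,\varpi,L}([0,T];\bH^{-2})$. In particular, $\delta u^n$ is uniformly bounded in $C^{p-\textnormal{var}}_2([0,T];\bH^0)$ by Lemma \ref{AprioriVariation}.

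Next I extract a convergent subsequence. Interpolating the uniform $L^2_T\bH^2$ bound with the $C^{p-\textnormal{var}}_T\bH^0$ bound via a standard Aubin--Lions / Arzel\`a--Ascoli argument, $\{u^n\}$ is relatively compact in $C_T\bH^{1-\varepsilon}$ for any $\varepsilon>0$, hence in $L^2_T\bH^1\cap C_T\bH^0$, and along a further subsequence converges weakly-$\ast$ in $L^\infty_T\bH^1$, weakly in $L^2_T\bH^2$, and pointwise weakly in $\bH^1$, to a limit $u$. Passing to the limit in \eqref{SystemSolutionU}: the drift terms converge by the strong convergence in $L^2_T\bH^1$ (using the trilinear estimate \eqref{trilinear form estimate}) and the weak convergence of $\Delta u^n$; the rough brackets $[\clA^{1,n}_{st}+\clA^{2,n}_{st}]u_s^n$ converge to $[\clA^1_{st}+\clA^2_{st}]u_s$ in $\bH^{-2}$ for a.e.\ $s$ by convergence of the URD coefficients (joint continuity of the map $(\sigma,\bZ)\mapsto\boldsymbol{\clA}$ at fixed level of regularity). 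Defining $u_{st}^\natural$ by \eqref{SystemSolutionU} with the limiting data, the uniform $C^{\frac{p}{3}-\textnormal{var}}_{2,\varpi,L}$ bound is preserved in the limit, so $u$ is a strong solution with data $(u_0,\sigma,\bZ)$. By uniqueness (Theorem \ref{thm:UniqandEnEq2d}), $u=\Gamma(u_0,\sigma,\bZ)$, and since every subsequence has the same limit, the whole sequence converges. The main obstacle here is establishing joint continuity of the URD and the remainder $u^\natural$ under variation of $(\sigma,\bZ)$, and in particular ensuring that the sewing representation of $u^\natural$ is stable under these perturbations; this is where the $p$-variation structure and Theorem \ref{Thm2.5Abstract} do the essential work.

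For the Wong--Zakai statement, it is classical (see \cite{FrVi10}) that the canonical lift $\bB^n=(\delta B^n,\int\delta B^n\otimes\dot B^n)$ of piecewise linear interpolations of $B$ converges almost surely, in $\mathcal{C}^{p-\textnormal{var}}_g([0,T];\bR^K)$, to the Stratonovich lift $\bB$ of $B$. Applying the continuity part of the corollary with $\sigma^n=\sigma$, $u_0^n=u_0$ and $\bZ^n=\bB^n(\omega)$, we conclude that $u^n\to\tilde u:=\Gamma(u_0,\sigma,\bB(\omega))$ almost surely in $L^2_T\bH^1\cap C_T\bH^1_w\cap C_T\bH^0$. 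It remains to identify $\tilde u$ with the probabilistic Stratonovich solution $u$ from \cite{mikulevicius2005global}. For this I would take $u$, rewrite the Stratonovich SPDE as a rough equation driven by $\bB$ (the Stratonovich integral is the rough integral against the Brownian rough path, cf.\ Remark \ref{rem:URDtoIntegral}), and observe that the resulting formulation agrees with Definition \ref{def:RNSStrongSol}; then $u$ and $\tilde u$ are two strong solutions with the same data, so they coincide $\bP$-a.s.\ by Theorem \ref{thm:UniqandEnEq2d}. The energy estimate \eqref{VelocityEnergy} follows for almost every sample path as an immediate consequence of the pathwise construction.
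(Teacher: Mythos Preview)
Your proposal is correct and follows essentially the same compactness-plus-uniqueness scheme as the paper's proof: the paper likewise obtains uniform bounds from the enstrophy balance, uses the $p$-variation equicontinuity in $\bH^0$ (via Lemma \ref{AprioriVariation}) together with a compact embedding to extract a subsequence converging in $C_T\bH^0\cap L^2_T\bH^1$, passes to the limit in \eqref{SystemSolutionU} using strong convergence of the unbounded rough drivers, and then invokes uniqueness to upgrade subsequential to full convergence. For the Wong--Zakai identification the paper is slightly more explicit about the null-set issue---it matches the rough and Stratonovich integrals test-function by test-function via \cite[Corollary 5.2]{FrHa14} and then intersects over a countable dense family $\{\phi_l\}\subset\bH^2$---but this is exactly the content of your last step.
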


\begin{remark}
By applying the curl operator to $u$, we also obtain continuity of the mapping
\begin{align*}
\Gamma:\bH^1\times \big(\W^{3, \infty}_{\textrm{div}} \big)^K \times \mathcal{C}^{p-\textnormal{var}}_g([0,T];\bR^K)&\to L^2_T\bH^0\cap C_T\bH_w^0 \cap C_T \bH^{-1}\\
(u_0,\sigma,\bZ)&\mapsto\xi .
\end{align*}
\end{remark}

\subsubsection{Random dynamical system}

Based on our well-posedness and stability result in dimension two, under suitable assumptions on the driving rough path, we are able to construct a continuous random dynamical system corresponding to the Navier-Stokes equations
\eqref{eq:RNSDiffForm}.

Let us first introduce the necessary definitions. Let $(\Omega, \mathcal{F})$
and $(X, \mathcal{B})$ be measurable spaces. A family $\theta =
(\theta_t)_{t\geq 0}$ of maps from $\Omega$ to itself is a measurable
dynamical system provided
\begin{enumerate}
  \item $(t, \omega) \mapsto \theta_t \omega$ is $\mathcal{B} ([0,\infty))
  \otimes \mathcal{F} / \mathcal{F}$-measurable,
  
  \item $\theta_0 = \tmop{Id}$,
  
  \item $\theta_{s + t} = \theta_t \circ \theta_s$ for all $s, t \geq 0$.
\end{enumerate}
If $\mathbb{P}$ is a probability measure on $(\Omega, \mathcal{F})$ that is
invariant under $\theta$, i.e. $\mathbb{P} \circ \theta^{- 1}_t
=\mathbb{P}$ for all $t \geq 0$, we call $(\Omega, \mathcal{F},
\mathbb{P}, \theta)$ a measurable metric dynamical system.  A measurable random dynamical system on
$(X, \mathcal{B})$ is a measurable metric dynamical system $(\Omega,
\mathcal{F}, \mathbb{P}, \theta)$ together with a measurable map $\varphi :
[0,\infty) \times \Omega \times X \rightarrow X$ satisfying the cocycle
property: that is, $\varphi_0 (\omega) = \tmop{Id}_X$ for all $\omega \in \Omega$,
and
\[ \varphi_{s + t} (\omega) = \varphi_t (\theta_s \omega) \circ \varphi_s
   (\omega) \]
for all $s, t \in [0,\infty)$ and $\omega \in \Omega$. If, in addition, $X$ is a topological
space and the map $\varphi (\cdummy, \omega, \cdummy) : [0,\infty)\times X
\rightarrow X$ is continuous for every $\omega \in \Omega$, it is called a
continuous random dynamical system.

Under suitable assumptions on the coefficients,
rough path driven differential equations generate random dynamical systems
provided the driving rough path is a rough path cocycle  \cite{BAILLEUL20175792}. To be more precise,
if $p \in [2, 3)$ and $(\Omega, \mathcal{F}, \mathbb{P}, \theta)$ is a
measurable metric dynamical system, then we say that
\[ \bZ = (Z, \mathbb{Z}) : \Omega \rightarrow C_{\rm loc}^{p - \tmop{var}} ([0,
   \infty); \bR^K) \times C_{2,\rm loc}^{p - \tmop{var}} ([0, \infty) ; \bR^{K
   \times K}) \]
is a continuous $p$-rough path cocycle provided $\bZ (\omega)|_{[0,T]}$ is a
continuous $p$-rough path for every $T>0$ and $\omega \in \Omega$ and the following
cocycle property
\[ Z_{s, s + t} (\omega) = Z_t (\theta_s \omega), \qquad \mathbb{Z}_{s, s + t}
   (\omega) =\mathbb{Z}_{0, t} (\theta_s \omega), \]
holds true for every $s, t \geq 0$ and $\omega \in \Omega$. Similarly,
one may  define a $p$-rough path cocycle for any $p \in [1, \infty)$.
It was shown in Section 2 in \cite{BAILLEUL20175792} that rough path lifts of various stochastic
processes define cocycles. These  include Gaussian processes with
stationary increments and independent components under certain assumption on
the covariance, satisfied for instance by the fractional Brownian motion with
Hurst parameter $H > 1 / 4$.

\begin{corollary}
Assume that the driving rough path $\bZ$ is a continuous $p$-rough
path cocycle for some $p \in [2, 3)$ and let $d=2$. Then the system \eqref{eq:RNSDiffForm} generates a continuous random dynamical system on $\bH^{1}$.
\end{corollary}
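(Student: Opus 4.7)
The plan is to define the cocycle via the well-posedness result in two dimensions, verify the cocycle property by combining uniqueness with the rough path cocycle property of $\bZ$, and then upgrade the joint continuity provided by Corollary \ref{cor:stability} from the weak $\bH^{1}$-topology to the strong one by exploiting the enstrophy balance \eqref{eq:EnergyEquality}.

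First, I would define $\varphi_{t}(\omega) u_{0} := \Gamma(u_{0}, \sigma, \bZ(\omega)|_{[0,t]})(t)$, where $\Gamma$ is the solution map from Corollary \ref{cor:stability}. By Theorem \ref{existenceThmNoGalerkin} (global existence in $d=2$) and Theorem \ref{thm:UniqandEnEq2d} (uniqueness together with $u \in C_T \bH^{1}$), this is well-defined as an $\bH^{1}$-valued map. Joint measurability of $(t, \omega, u_{0}) \mapsto \varphi_{t}(\omega) u_{0}$ follows from the continuity of $\Gamma$ together with the measurability of $\omega \mapsto \bZ(\omega)$. For the cocycle property, fix $s, t \geq 0$ and $\omega \in \Omega$. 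By uniqueness and the fact that the restriction of a strong solution to a subinterval is again a strong solution (relative to the restricted rough path), the value $\varphi_{s+t}(\omega) u_{0}$ coincides with the time-$t$ value of the solution on $[s, s+t]$ driven by $\bZ(\omega)|_{[s, s+t]}$ starting from $\varphi_{s}(\omega) u_{0}$. The cocycle property $Z_{s, s+t}(\omega) = Z_{t}(\theta_{s}\omega)$, $\mathbb{Z}_{s, s+t}(\omega) = \mathbb{Z}_{0,t}(\theta_{s}\omega)$, together with the time-translation invariance of the equation (which only depends on the increments of $\bZ$), identifies this value with $\varphi_{t}(\theta_{s} \omega) \circ \varphi_{s}(\omega) u_{0}$.

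The substantive step is the continuity of $\varphi(\cdot, \omega, \cdot) : [0,\infty) \times \bH^{1} \to \bH^{1}$. Fix $\omega$, $T > 0$, and a sequence $(t_{n}, u_{0}^{n}) \to (t, u_{0})$ in $[0, T] \times \bH^{1}$. Writing $u^{n} := \varphi_{\cdot}(\omega) u_{0}^{n}$ and $u := \varphi_{\cdot}(\omega) u_{0}$, Corollary \ref{cor:stability} yields $u^{n} \to u$ in $L^{2}_{T}\bH^{1} \cap C_{T}\bH^{1}_{w} \cap C_{T}\bH^{0}$. Uniform weak $\bH^{1}$-convergence combined with the strong continuity $u \in C_{T}\bH^{1}$ shows that $u^{n}_{t_{n}} \to u_{t}$ weakly in $\bH^{1}$; to promote this to strong convergence, I pass to the vorticity $\xi^{n} = \nabla \times u^{n}$. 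The enstrophy balance \eqref{eq:EnergyEquality} gives
\begin{equation}
|\xi^{n}_{t_{n}}|_{0}^{2} + 2 \vartheta \int_{0}^{t_{n}} |\nabla \xi^{n}_{r}|_{0}^{2} \, dr = |\xi^{n}_{0}|_{0}^{2},
\end{equation}
and similarly for $\xi$. Since $u_{0}^{n} \to u_{0}$ in $\bH^{1}$ implies $|\xi^{n}_{0}|_{0} \to |\xi_{0}|_{0}$, and since $L^{2}_{T}\bH^{1}$-convergence of $u^{n}$ together with $t_{n} \to t$ gives $\int_{0}^{t_{n}} |\nabla \xi^{n}_{r}|_{0}^{2} \, dr \to \int_{0}^{t} |\nabla \xi_{r}|_{0}^{2} \, dr$, passing to the limit yields $|\xi^{n}_{t_{n}}|_{0} \to |\xi_{t}|_{0}$. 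Weak convergence together with norm convergence then gives strong convergence $\xi^{n}_{t_{n}} \to \xi_{t}$ in $\bH^{0}$.

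Finally, writing $u = \bar{u} + \operatorname{\BS}\xi$, the Biot--Savart bound \eqref{ineq:BSGrad} reduces strong $\bH^{1}$-convergence of $u^{n}_{t_{n}} - \bar u^{n}_{t_{n}}$ to the already established strong $\bH^{0}$-convergence of $\xi^{n}_{t_{n}}$. The convergence of the spatial means $\bar u^{n}_{t_{n}} \to \bar u_{t}$ in $\bR^{d}$ follows from the rough equation \eqref{eq:VortRoughForm} for $\bar u$, which depends continuously on the initial datum, on the already established $\bH^{0}$-convergence of $\operatorname{\BS}\xi^{n}_{\cdot}$, and on $\bZ(\omega)$. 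Combining these pieces yields $\varphi_{t_{n}}(\omega) u_{0}^{n} \to \varphi_{t}(\omega) u_{0}$ in $\bH^{1}$, completing the proof. The main obstacle is precisely this weak-to-strong upgrade: it is the place where the two-dimensional structure, in the form of the enstrophy identity, is indispensable, and it is also where one has to handle the slight subtlety that $t_{n}$ and $u_{0}^{n}$ vary simultaneously.
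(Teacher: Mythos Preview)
Your cocycle argument matches the paper's. For continuity, however, the paper takes a much shorter path: it simply cites the contraction estimate \eqref{eq:Contraction} from Theorem~\ref{thm:contractiveTheorem}. That estimate (together with Lemma~\ref{lem:MeanContractive} and the Biot--Savart law) gives Lipschitz dependence of $u_t$ on $u_0$ in $\bH^1$, uniformly on compact time intervals; combined with $u \in C_T\bH^1$ from Theorem~\ref{thm:UniqandEnEq2d}, joint continuity of $(t,u_0)\mapsto \varphi_t(\omega)u_0$ follows in one line. Corollary~\ref{cor:stability} is not invoked.

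Your alternative route, weak $\bH^1$-convergence via Corollary~\ref{cor:stability} followed by a weak-to-strong upgrade through the enstrophy identity, can be made to work, but not as written. The claim ``$L^{2}_{T}\bH^{1}$-convergence of $u^{n}$ together with $t_{n} \to t$ gives $\int_{0}^{t_{n}} |\nabla \xi^{n}_{r}|_{0}^{2} \, dr \to \int_{0}^{t} |\nabla \xi_{r}|_{0}^{2} \, dr$'' is false: $L^2_T\bH^1$-convergence of $u^n$ controls only $\xi^n$ in $L^2_T\bH^0$, not $\nabla\xi^n$. The correct repair is to use weak lower semicontinuity: $\int_0^t |\nabla \xi_r|_0^2\,dr \le \liminf_n \int_0^{t_n} |\nabla \xi^n_r|_0^2\,dr$, which together with $|\xi^n_0|_0 \to |\xi_0|_0$ and the enstrophy identities for $\xi^n$ and $\xi$ yields $\limsup_n |\xi^n_{t_n}|_0 \le |\xi_t|_0$; then weak convergence plus norm convergence gives strong convergence. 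So your strategy is salvageable, but the justification as given is incorrect, and the fix makes the detour longer than simply invoking \eqref{eq:Contraction}.
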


\begin{proof}
Let $\psi : [0, \infty) \times [0,
\infty) \times \Omega \times \bH^1 \rightarrow \bH^1$ be the
random flow generated by \eqref{eq:RNSDiffForm}, that is, $\psi (t, s, \omega, u_0)$ is the
unique solution to \eqref{eq:RNSDiffForm} starting at time $s$ from the initial condition
$u_0$, driven by $\bZ (\omega)$ and evaluated at the time $t$. Then
it follows from our definition of solution and the cocycle property of
$\bZ$ that $\psi (t + h, s + h, \omega, u_0) = \psi (t, s, \theta_h
\omega, u_0)$. Consequently, we define $\varphi (t, \omega, u_0) = \psi (t, 0,
\omega, u_0)$ and using also the semiflow property of $\psi$ which follows
from uniqueness, we deduce that $\varphi$ has the cocycle property. The continuity with respect to time as well as the initial condition follows from \eqref{eq:Contraction}.
\end{proof}

\color{black}

\section{A priori estimates}\label{sec:AprioriEst}

In this section, we derive a priori estimates of the remainder term $u^{\natural}$ and  $|u|_{p-\textnormal{var};[0,T];\bH^{0}}$. Although similar estimates have been derived in \cite{HLN}, there are subtle differences in the present paper motivating us to include them also here. Namely, since we are dealing with strong solutions, all the estimates have to be done with one extra derivative, but this is more than just shifting the scale we are working on. Indeed, technical computations involving the non-linearity and the remainder term show that it is no longer necessary  to introduce a scale of fractional Sobolev spaces that were needed in \cite{HLN}.

Let $u$ be a  solution of \eqref{eq:RNSDiffForm} in the sense of Definition \ref{def:RNSStrongSol}. We recall the definition of $\mu$ in \eqref{eq:DriftInH0} which we restrict to the scale $(\bH^n)_{n }$. 
It follows that for  $(s,t)\in \Delta_T$,
\begin{equation}\label{RNS2}
\delta u_{st} =  \delta \mu_{st} + \clA_{st}^{1} u_s+ \clA_{st}^{2}u_s + u^{\natural}_{st},
\end{equation}
where the equality holds in $\bH^{-2}$.
Using  \eqref{trilinear form estimate} with $m_1, m_3 = 1$ and $m_2 = 0$ we obtain $|B_P(u_r)|_{-1} \lesssim |u_r|_1^2$, and hence
\begin{equation} \label{DriftLipschitz}
|\delta \mu_{st} |_{-1} \lesssim (1 + |u|_{L^{\infty} \bH^1})^2 (t-s).
\end{equation}

Let us  define the intermediate remainder 
\begin{equation}\label{eq:defin of sharp}
u^\sharp_{st}:=\delta u_{s t}-\clA^{1}_{st}u_s=\delta\mu_{st}+\clA^{2}_{st}u_s+u^{\natural}_{st}.
\end{equation}
Notice that the first expression has low regularity in time but is not very irregular in space, whereas the second one has higher time regularity but less regularity in space. Interpolating these expressions with the help of  the smoothing operators from Definition \ref{def:smoothingOp} allows us to obtain a priori estimates on the remainder $u^{\natural}$. The following result is similar as in \cite[Lemma 3.1]{HLN}, but with the added spatial regularity of the solution, which allows us to circumvent the use of fractional Sobolev spaces as in \cite{HLN}.

\begin{lemma} \label{Thm2.5}
Assume that $u$ is a solution of  \eqref{eq:RNSDiffForm} in the sense of Definition \ref{def:RNSStrongSol}. For $(s,t)\in \Delta_T$ such that  $\varpi(s,t)\le L$, let $\omega_{\natural }(s,t) := |u^{\natural}|^{\frac{p}{3}}_{\frac{p}{3} -var; [s,t];\bH^{-2}}.$
Then  there is a constant $\tilde{L}>0$, depending only on $p$ and $d$,  such that  for all $(s,t)\in \Delta_T$ with  $\varpi(s,t)\le L$ and $\omega_{A}(s,t)\leq \tilde{L}$,
\begin{equation} \label{RemainderEstimateWithoutMu}
\omega_{\natural }(s,t)  \lesssim_{p} |u|^{\frac{p}{3}}_{L^{\infty}_T\bH^1}  \omega_{\clA}(s,t) + ( 1 + |u|_{L^{\infty}_T\bH^1} )^{\frac{2p}{3}}(t-s)^{\frac{p}{3}} \omega_{\clA}(s,t)^{\frac13}.
\end{equation}
\end{lemma}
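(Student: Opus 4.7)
The plan is to adapt the sewing-plus-smoothing argument behind Theorem \ref{Thm2.5Abstract} (see also \cite[Lemma 3.1]{HLN}), exploiting that $u\in L^{\infty}_T\bH^{1}$ allows the remainder bound to be obtained directly in the integer-order space $\bH^{-2}$ without invoking fractional Sobolev scales. The starting point is Chen's identity for $u^{\natural}$: applying the second-order increment $\delta_{s\theta t}$ to \eqref{SystemSolutionU}, using that $\delta_{s\theta t}(\delta u)=\delta_{s\theta t}(\delta \mu)=0$ and the Chen relation \eqref{eq:chen-relation} for $\boldsymbol{\clA}$, a direct computation gives
\begin{equation*}
\delta u^{\natural}_{s\theta t}=\clA^{1}_{\theta t}u^{\sharp}_{s\theta}+\clA^{2}_{\theta t}\delta u_{s\theta},\qquad (s,\theta,t)\in\Delta^{(2)}_T.
\end{equation*}

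I would then bound each piece of the right-hand side in $\bH^{-2}$ by a quantity superadditive with exponent strictly greater than $1$. Substituting $\delta u=u^{\sharp}+\clA^{1}u_s$ in the second term isolates the clean contribution $\clA^{2}_{\theta t}\clA^{1}_{s\theta}u_s$, which by \eqref{A1bound} is controlled by $\omega_{\clA}(s,t)^{3/p}|u|_{L^{\infty}_T\bH^{1}}$. The remaining contributions are of the form $\clA^{i}_{\theta t}u^{\sharp}_{s\theta}$ with $i=1,2$, for which I use the alternative expression \eqref{eq:defin of sharp}, $u^{\sharp}=\delta\mu+\clA^{2}u_s+u^{\natural}$, together with \eqref{DriftLipschitz} to get
\begin{equation*}
|u^{\sharp}_{s\theta}|_{-1}\lesssim (\theta-s)(1+|u|_{L^{\infty}_T\bH^{1}})^{2}+\omega_{\clA}(s,\theta)^{2/p}|u|_{L^{\infty}_T\bH^{1}}+|u^{\natural}_{s\theta}|_{-1}.
\end{equation*}
The delicate quantity $|u^{\natural}_{s\theta}|_{-1}$ is handled via the smoothing operators of Definition \ref{def:smoothingOp}: for $\eta\in(0,1]$, the smooth part satisfies $|J^{\eta}u^{\natural}_{s\theta}|_{-1}\lesssim \eta^{-1}\omega_{\natural}(s,\theta)^{3/p}$, while the rough part is rewritten via $u^{\natural}=\delta u-\delta\mu-\clA^{1}u_s-\clA^{2}u_s$ and bounded term by term, using in particular $|(I-J^{\eta})\delta u|_{-1}\lesssim \eta^{2}|u|_{L^{\infty}_T\bH^{1}}$ and the URD bounds. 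Optimizing in $\eta$ yields an estimate in which every appearance of $\omega_{\natural}$ on the right comes multiplied by a strictly positive power of $\omega_{\clA}$.

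Substituting back into the Chen identity produces a bound of the shape $|\delta u^{\natural}_{s\theta t}|_{-2}\le \omega(s,t)^{\gamma}$ with $\gamma>1$, to which the sewing lemma \cite[Lemma~4.2]{FrHa14} applies and delivers a control-level inequality
\begin{equation*}
\omega_{\natural}(s,t)\le C\,|u|_{L^{\infty}_T\bH^{1}}^{p/3}\omega_{\clA}(s,t)+C\,(1+|u|_{L^{\infty}_T\bH^{1}})^{2p/3}(t-s)^{p/3}\omega_{\clA}(s,t)^{1/3}+C\,\omega_{\clA}(s,t)^{\beta}\omega_{\natural}(s,t)
\end{equation*}
for some $\beta>0$. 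Choosing $\tilde L>0$ so small that $C\omega_{\clA}(s,t)^{\beta}\le 1/2$ whenever $\omega_{\clA}(s,t)\le \tilde L$ absorbs the bootstrap term on the right into the left-hand side, yielding \eqref{RemainderEstimateWithoutMu}. The main technical obstacle I anticipate is precisely this tuning of the smoothing parameter $\eta$: one has to ensure that, after optimization, every exponent appearing in $\omega_{\clA}$ and $(t-s)$ is strictly greater than $1$, so that the sewing closes at the regularity $\bH^{-2}$ rather than the rougher $\bH^{-3}$ given by the abstract theorem, and it is here that the improved regularity $u\in L^{\infty}_T\bH^{1}$ plays the crucial role in removing the need for the fractional scales used in \cite{HLN}.
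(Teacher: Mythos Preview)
Your overall strategy—Chen identity for $u^{\natural}$, interpolation via the two representations of $u^{\sharp}$ in \eqref{eq:defin of sharp}, the sewing lemma, and absorption of the $\omega_{\natural}$ term on the right—is exactly the paper's argument. There is, however, one concrete gap in your bookkeeping, and one organizational difference worth noting.

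\textbf{The gap.} You write that the ``remaining contributions are of the form $\clA^{i}_{\theta t}u^{\sharp}_{s\theta}$ with $i=1,2$'' and then supply only the bound on $|u^{\sharp}_{s\theta}|_{-1}$. That suffices for $i=1$ (since $\clA^{1}\in\clL(\bH^{-1},\bH^{-2})$), but not for $i=2$: the unbounded rough driver bounds \eqref{ineq:UBRcontrolestimates} only give $\clA^{2}\in\clL(\bH^{n},\bH^{n-2})$ for $n\ge 0$, so to land in $\bH^{-2}$ you need $|u^{\sharp}_{s\theta}|_{0}$. Via the representation $u^{\sharp}=\delta\mu+\clA^{2}u_s+u^{\natural}$ this is not available: $\delta\mu$ is only controlled in $\bH^{-1}$ by \eqref{DriftLipschitz}, and $\clA^{2}u_s$ only in $\bH^{-1}$ since $u_s\in\bH^{1}$. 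Your smoothing of $u^{\natural}$ alone does not fix this term.

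\textbf{How the paper closes it.} Rather than smoothing $u^{\natural}$ inside $u^{\sharp}$, the paper applies $J^{\eta}$ and $(I-J^{\eta})$ directly to $\delta u^{\natural}_{s\theta t}$ (dually, to the test function $\phi$). For the $(I-J^{\eta})$ part it uses the first representation $u^{\sharp}=\delta u-\clA^{1}u_s$ and the fact that $u\in L^{\infty}_T\bH^{1}$; for the $J^{\eta}$ part it uses the second representation. In the $J^{\eta}$ piece the troublesome term becomes $\delta\mu_{s\theta}(J^{\eta}\clA^{2,*}_{\theta t}\phi)$, and now $J^{\eta}:\bH^{0}\to\bH^{1}$ at cost $\eta^{-1}$ lets $\delta\mu$ remain in $\bH^{-1}$; similarly $u^{\natural}_{s\theta}(J^{\eta}\clA^{2,*}_{\theta t}\phi)$ costs $\eta^{-2}$. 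Setting $\eta=\lambda\,\omega_{\clA}(s,t)^{1/p}$ balances all exponents and the bootstrap term comes with the factor $(\lambda^{-1}+\lambda^{-2})^{p/3}$, which is made $\le 1/2$ by choosing $\lambda$ large; then $\tilde L$ is chosen so that $\eta\le 1$. Your fix is easy: either apply the same $J^{\eta}$ splitting to the whole of $\delta u^{\natural}_{s\theta t}$ rather than to $u^{\natural}$ alone, or run your interpolation once more at the $\bH^{0}$ level to treat $\clA^{2}_{\theta t}u^{\sharp}_{s\theta}$; the former is the paper's choice and is cleaner.
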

\begin{proof}
Applying $\delta$ to \eqref{SystemSolutionU}, we find  that for all $\phi \in \bH^{2}$ and $(s,\theta,t)\in \Delta_T,$
$$
\delta u_{s \theta t}^{\natural }( \phi) = \delta u_{s \theta} (\clA_{\theta t}^{2,*} \phi) + u^\sharp_{s\theta}(\clA^{1,*}_{\theta t} \phi),
$$
where $u^\sharp_{s\theta}$ is defined in \eqref{eq:defin of sharp}. 
We decompose $\delta u^{\natural }_{s \theta t} (\phi)$  into a smooth (in space) and non-smooth part using the smoothing operator $J^{\eta}$ to get 
$$
\delta u^{\natural }_{s \theta t} (\phi)  = (I - J^{\eta})\delta u^{\natural }_{s \theta t} (\phi) + J^{\eta}\delta u^{\natural }_{s \theta t} ( \phi) ,
$$ 
for some $\eta \in (0,1]$ that will be specified later.
To estimate the smooth part, we use \eqref{smoothingOperator} and that $u^{\sharp}_{s\theta}= \delta u_{s \theta}-\clA^{1}_{s\theta}u_s$ to obtain
\begin{align*}
\left|(I - J^{\eta} )\delta u^{\natural }_{s \theta t} (\phi)\right| & \le    |u|_{L^{\infty}_T\bH^1}\left( \left|(I-J^{\eta}) \clA_{\theta t}^{1*} (\phi)\right|_{-1}  +  \left|(I-J^{\eta}) \clA_{s \theta}^{1,*} \clA_{\theta t}^{1*} \phi\right|_{-1}  + \left|(I-J^{\eta}) \clA^{2*}_{\theta t} \phi\right|_{-1}\right) \\
& \lesssim |u|_{L^{\infty}_T\bH^1} \left( \eta^2 \left| \clA_{\theta t}^{1*} (\phi)\right|_{1}  +  \eta \left| \clA_{s \theta}^{1,*} \clA_{\theta t}^{1*} \phi\right|_{0}  + \eta \left|\clA^{2*}_{\theta t} \phi\right|_{0}\right)\\
& \lesssim |u|_{L^{\infty}_T\bH^1} \left( \omega_{\clA}(s,t)^{\frac{1}{p}} \eta^2 + \omega_{\clA}(s,t)^{\frac{2}{p}} \eta \right) |\phi|_2.
\end{align*}
In order to estimate the non-smooth part, we use  the form $u^\sharp_{s \theta}=  \delta \mu_{s \theta } + \clA_{s \theta}^{2} u_s + u_{s \theta}^{\natural }$ to get
\begin{align*}
J^{\eta}\delta u_{s \theta t}^{\natural } ( \phi) & =  \delta \mu_{s \theta}  (J^{\eta}\clA^{1,*}_{\theta t} \phi) + u_s(J^{\eta} \clA^{2,*}_{s \theta} \clA_{\theta t}^{1,*} \phi) + u_{s \theta}^{\natural }(J^{\eta}\clA_{\theta t}^{1,*} \phi) \\
& \quad+ \delta \mu_{s \theta}  (J^{\eta}\clA^{2,*}_{\theta t} \phi) + u_s(J^{\eta} \clA^{1,*}_{s \theta} \clA_{\theta t}^{2,*} \phi)+ u_s( J^{\eta}\clA^{2,*}_{s \theta} \clA_{\theta t}^{2,*} \phi) + u_{s \theta}^{\natural }(J^{\eta}\clA_{\theta t}^{2,*} \phi) ,
\end{align*}
Estimating each term and using \eqref{smoothingOperator} and \eqref{DriftLipschitz}, for all $(s,\theta,t)\in \Delta_T^{(2)}$ such that $\varpi(s,t)\le L$,  we find
\begin{align}
|J^{\eta}\delta u^{\natural }_{s \theta t} ( \phi ) | & \lesssim (1 + |u|_{L^{\infty}_T \bH^1})^2 (t-s)  |\clA^{1,*}_{\theta t} \phi|_1 + |u|_{L_T^{\infty}\bH^1}  |\clA^{2,*}_{s \theta} \clA_{\theta t}^{1,*} \phi|_2 + \omega_{\natural }(s,t)^{\frac{3}{p}} \eta^{-1}| \clA_{\theta t}^{1,*} \phi|_2\notag \\
&\quad + (1 + |u|_{L^{\infty}_T \bH^1})^2 (t-s)  \eta^{-1} |\clA^{2,*}_{\theta t} \phi |_0 + |u|_{L_T^{\infty}\bH^1} |\clA^{1,*}_{s \theta} \clA_{\theta t}^{2,*}\phi|_{-1} + |u|_{L_T^{\infty}\bH^1} \eta^{-1} |\clA^{2,*}_{s \theta} \clA_{\theta t}^{2,*}\phi|_{-2} \notag \\
&\quad + \omega_{\natural }(s,t)^{\frac{3}{p}} \eta^{-2} | \clA_{\theta t}^{2,*} \phi|_0 \notag \\
& \lesssim  \left( (1 + |u|_{L^{\infty}_T \bH^1})^2 (t-s) \omega_{\clA}(s,t)^{\frac{1}{p}} + |u|_{L_T^{\infty}\bH^1} \omega_{\clA}(s,t)^{\frac{3}{p}} + \omega_{\natural }(s,t)^{\frac{3}{p}} \omega_{\clA}(s,t)^{\frac{1}{p}} \eta^{-1}  \right.\notag  \\
&\quad \left. + (1 + |u|_{L^{\infty}_T \bH^1})^2 (t-s) \omega_{\clA}(s,t)^{\frac{2}{p}} \eta^{-1} + |u|_{L_T^{\infty}\bH^1} \omega_{\clA}(s,t)^{\frac{3}{p}} + |u|_{L_T^{\infty}\bH^1} \omega_{\clA}(s,t)^{\frac{4}{p}}  \eta^{-1} \right.\notag \\
&\quad +\left.  \omega_{\natural }(s,t)^{\frac{3}{p}} \omega_{\clA}(s,t)^{\frac{2}{p}} \eta^{-2} \right) |\phi|_2  . \label{ineq:remainderestlastline}
\end{align}
Setting $\eta = \omega_{\clA}(s,t)^{\frac{1}{p}} \lambda $ for some constant $\lambda > 0$ to be determined later,  we have 
\begin{align*}
|\delta u^{\natural }_{s \theta t} |_{-2} & \lesssim   |u|_{L^{\infty}_T\bH^1}  \omega_{\clA}(s,t)^{\frac{3}{p}} ( \lambda^{-1} + 1 + \lambda + \lambda^2) + (t-s) \omega_{\clA}(s,t)^{\frac{1}{p}} \\
&\quad + (1 + |u|_{L^{\infty}_T \bH^1})^2 (t-s) \omega_{\clA}(s,t)^{\frac{2}{p}}  +  \omega_{\natural }(s,t)^{\frac{3}{p}} (\lambda^{-1}+ \lambda^{-2}) \\
& \lesssim_{p}  \left( |u|^{\frac{p}{3}}_{L^{\infty}_T\bH^1}  \omega_{\clA}(s,t) ( \lambda^{-1} + 1 + \lambda + \lambda^2)^{\frac{p}{3}} + (t-s)^{\frac{p}{3}} \omega_{\clA}(s,t)^{\frac{1}{3}}  \right.\\
& \quad+  \left. (1 + |u|_{L^{\infty}_T \bH^1})^{\frac{2p}{3}}(t-s)^{\frac{p}{3}} \omega_{\clA}(s,t)^{\frac{2}{3}}  +  \omega_{\natural }(s,t)(\lambda^{-1}+ \lambda^{-2})^{\frac{p}{3}} \right)^{\frac{3}{p}} .
\end{align*}
Applying the sewing lemma, Lemma 2.1 \cite{DeGuHoTi16}, we get
\begin{align*}
| u^{\natural }_{s t} |_{-2}^{\frac{p}{3}}&\lesssim_{p} |u|^{\frac{p}{3}}_{L^{\infty}_T\bH^1}  \omega_{\clA}(s,t) ( \lambda^{-1} + 1 + \lambda + \lambda^2)^{\frac{p}{3}} + (t-s)^{\frac{p}{3}} \omega_{\clA}(s,t)^{\frac{1}{3}}  .\\
&\quad \quad +  (1 + |u|_{L^{\infty}_T \bH^1})^{\frac{2p}{3}} (t-s)^{\frac{p}{3}} \omega_{\clA}(s,t)^{\frac{2}{3}}  +  \omega_{\natural }(s,t)(\lambda^{-1}+ \lambda^{-2})^{\frac{p}{3}}.
\end{align*}
Since $\omega_{\natural}=|u^{\natural}|^{\frac{p}{3}}_{\frac{p}{3} -var; [s,t];\bH^{-2}}$ is equal to the infimum over all controls satisfying $|u_{st}^{\natural }|_{-2} \leq \omega_{\natural }(s,t)^{\frac{3}{p}}$ (see  \eqref{smoothingOperator}), there is a constant $C=C(p,d)$ such that 
\begin{align*}
\omega_{\natural }(s, t) &\le C  \left( |u|^{\frac{p}{3}}_{L^{\infty}_T\bH^1}  \omega_{\clA}(s,t) ( \lambda^{-1} + 1 + \lambda + \lambda^2)^{\frac{p}{3}} + (t-s)^{\frac{p}{3}} \omega_{\clA}(s,t)^{\frac{1}{3}}  \right.\\
&\quad \quad\quad +  \left. (1 + |u|_{L^{\infty}_T \bH^1})^{\frac{2p}{3}} (t-s)^{\frac{p}{3}} \omega_{\clA}(s,t)^{\frac{2}{3}}  +  \omega_{\natural }(s,t)(\lambda^{-1}+ \lambda^{-2})^{\frac{p}{3}} \right).
\end{align*}
Choosing $\lambda$ such that $C(\lambda^{-1}+ \lambda^{-2})^{\frac{p}{3}} \leq \frac{1}{2}$ and $\tilde{L}>0$ such that $\eta=\omega_{\clA}(s,t)^{\frac{1}{p}} \lambda \leq \tilde{L}\lambda\le  1$, we obtain \eqref{RemainderEstimateWithoutMu}.
\end{proof}

We go on to prove an a priori estimate on the $p$-variation of the solution $u$.
\begin{lemma} \label{AprioriVariation}
Assume that $u$ is a solution of \eqref{eq:RNSDiffForm} in the sense of Definition \ref{def:RNSStrongSol}. Then $u\in C^{p-\textnormal{var}}([0,T];\bH^{0})$ and  there is a constant $\tilde{L}>0$, depending only on $p$ and $d$,   such that for all $(s,t)\in \Delta_T$ with $\varpi(s,t)\le L$, $\omega_{A}(s,t)\leq \tilde{L}$, and $\omega_{\natural}(s,t)\leq \tilde{L}$, it holds that
\begin{equation} \label{UVariationWithoutMu}
\omega_u(s,t) \lesssim_{p} (1 + |u|_{L^{\infty}_T\bH^1})^{2p} (\omega_{\natural }(s,t) + (t-s) + \omega_{\clA}(s,t)), 
\end{equation}
where $\omega_u (s,t) := |u|^p_{p - \textnormal{var}; [s,t];\bH^{0}}$.
\end{lemma}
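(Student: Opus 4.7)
The plan is to obtain a pointwise bound $|\delta u_{st}|_{0}^{p}\lesssim(1+|u|_{L^{\infty}_{T}\bH^{1}})^{2p}(\omega_{\natural}(s,t)+(t-s)+\omega_{\clA}(s,t))$ on every admissible $(s,t)$ and then promote it to a $p$-variation bound using the characterization \eqref{equivdefpvar}. The starting point is the identity \eqref{RNS2} rewritten as
$$\delta u_{st}=\clA^{1}_{st}u_{s}+u^{\sharp}_{st},\qquad u^{\sharp}_{st}=\delta\mu_{st}+\clA^{2}_{st}u_{s}+u^{\natural}_{st}.$$
The piece $\clA^{1}_{st}u_{s}$ already sits natively in $\bH^{0}$, with $|\clA^{1}_{st}u_{s}|_{0}\lesssim\omega_{\clA}(s,t)^{1/p}|u|_{L^{\infty}_{T}\bH^{1}}$. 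The remainder $u^{\sharp}_{st}$, on the other hand, is only controlled in $\bH^{-1}$ (via $\delta\mu_{st}$ and $\clA^{2}_{st}u_{s}$) and $\bH^{-2}$ (via $u^{\natural}_{st}$), so the natural move is to interpolate with the smoothing operator $J^{\eta}$ of Definition \ref{def:smoothingOp}.

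For $(I-J^{\eta})u^{\sharp}_{st}$ I exploit the alternative form $u^{\sharp}_{st}=\delta u_{st}-\clA^{1}_{st}u_{s}$, both summands of which live in $\bH^{1}$, to obtain $|(I-J^{\eta})u^{\sharp}_{st}|_{0}\lesssim\eta\,|u|_{L^{\infty}_{T}\bH^{1}}+|\clA^{1}_{st}u_{s}|_{0}$. For $J^{\eta}u^{\sharp}_{st}$ I use the low-regularity representation term-by-term, together with $|J^{\eta}f|_{0}\lesssim\eta^{-k}|f|_{-k}$ for $k\in\{1,2\}$, the Lipschitz bound \eqref{DriftLipschitz} and the definition of $\omega_{\natural}$, yielding
$$|J^{\eta}u^{\sharp}_{st}|_{0}\lesssim\eta^{-1}(1+|u|_{L^{\infty}_{T}\bH^{1}})^{2}(t-s)+\eta^{-1}\omega_{\clA}(s,t)^{2/p}|u|_{L^{\infty}_{T}\bH^{1}}+\eta^{-2}\omega_{\natural}(s,t)^{3/p}.$$

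I then select $\eta:=\omega_{\clA}(s,t)^{1/p}+(t-s)^{1/p}+\omega_{\natural}(s,t)^{1/p}$, which is, up to a constant, the $1/p$-th power of the target control, and which under the smallness hypotheses $\omega_{\clA}(s,t),\omega_{\natural}(s,t)\le\tilde{L}$ and $\varpi(s,t)\le L$ can be arranged to lie in $(0,1]$. Since $\eta$ dominates each of its three summands, the cross terms satisfy $\eta^{-1}(t-s)\le(t-s)^{1-1/p}$, $\eta^{-1}\omega_{\clA}^{2/p}\le\omega_{\clA}^{1/p}$ and $\eta^{-2}\omega_{\natural}^{3/p}\le\omega_{\natural}^{1/p}$. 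Collecting and raising to the $p$-th power, and using $(t-s)^{p-1}\lesssim(t-s)$ (valid since $p\ge2$ and $t-s\le L$), gives
$$|\delta u_{st}|_{0}^{p}\lesssim(1+|u|_{L^{\infty}_{T}\bH^{1}})^{2p}\bigl(\omega_{\clA}(s,t)+(t-s)+\omega_{\natural}(s,t)\bigr).$$
The right-hand side is a control, the admissibility hypotheses are stable under passing to subintervals $(u,v)\subseteq[s,t]$ by superadditivity, and so \eqref{equivdefpvar} yields the desired bound \eqref{UVariationWithoutMu}. The global statement $u\in C^{p-\textnormal{var}}([0,T];\bH^{0})$ follows by partitioning $[0,T]$ into finitely many intervals on each of which the smallness conditions hold, which is possible since $\omega_{\clA}$, $\omega_{\natural}$ and $\varpi$ are continuous and vanish on the diagonal.

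The main obstacle is engineering $\eta$ so that all five contributions (the $\bH^{0}$-bound on $\clA^{1}_{st}u_{s}$, the $\eta$-penalty from $(I-J^{\eta})u^{\sharp}$, and the three inverse-power terms from $J^{\eta}u^{\sharp}$) collapse to a constant multiple of the single target control $\omega_{\natural}+(t-s)+\omega_{\clA}$ after the $p$-th power. Taking $\eta$ to be the $1/p$-th power of that very target is what makes the $\eta^{-1}$ and $\eta^{-2}$ factors annihilate the excess fractional exponents $2/p$ and $3/p$ coming from $\clA^{2}$ and $u^{\natural}$, while the smallness of $\omega_{\clA}$, $\omega_{\natural}$ and $\varpi$ serves both to force $\eta\in(0,1]$ and to absorb $(t-s)^{p-1}$ into $(t-s)$.
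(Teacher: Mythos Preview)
Your proof is correct and follows essentially the same strategy as the paper's: decompose via the smoothing operator $J^{\eta}$, estimate the high- and low-regularity pieces separately, and then choose $\eta=\omega_{\natural}(s,t)^{1/p}+\omega_{\clA}(s,t)^{1/p}+(t-s)^{1/p}$. The only cosmetic differences are that the paper applies $J^{\eta}$ on the test-function side (equivalent to yours by self-adjointness) and splits $\delta u_{st}$ in one go rather than isolating $\clA^{1}_{st}u_{s}$ first; the paper also handles the $(t-s)$ term by observing directly that $\eta\le1$ and $p\ge2$ give $(t-s)\le\eta^{p}\le\eta^{2}$ and hence $\eta^{-1}(t-s)\le\eta$, avoiding your detour through $(t-s)^{p-1}$.

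One small imprecision: you justify $(t-s)^{p-1}\lesssim(t-s)$ by ``$t-s\le L$'', but $L$ bounds $\varpi(s,t)$, not $t-s$. The correct reason is $t-s\le1$, which follows from your arranged constraint $\eta\le1$. The paper's proof has the same loose end (it does not explain how $(t-s)^{1/p}\le1$ is forced by the stated smallness hypotheses either), so this is a shared minor sloppiness rather than a defect particular to your argument.
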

\begin{proof}
For  all $\eta \in (0,1]$, $(s,t)\in \Delta_T$ and $\phi \in \bH^0$, we have
$$
\delta u_{st} (\phi)  = \delta u_{st}(J^{\eta} \phi)+ \delta u_{st}((I - J^{\eta})\phi) .
$$
Applying \eqref{smoothingOperator}, we find
$$
|\delta u_{st}((I - J^{\eta})\phi)| \leq 2 |u|_{L^{\infty}_T\bH^1} |(I - J^{\eta})\phi|_{-1} \lesssim \eta |u|_{L^{\infty}_T\bH^1} |\phi|_0 .
$$
In order to estimate the smooth part, we expand $\delta u_{st}$ using \eqref{RNS2} and then apply \eqref{smoothingOperator} to get
\begin{align*}
|\delta u_{st}(J^{\eta} \phi)| & \leq |u_{st}^{\natural}  (J^{\eta} \phi) | + |\delta \mu_{st} (J^{\eta} \phi)| + |u_s (\clA^{1,*}_{st} J^{\eta} \phi) |+ |u_s (\clA^{2,*}_{st} J^{\eta} \phi) | \\
& \lesssim \omega_{\natural }(s,t)^{\frac{3}{p}} |J^{\eta} \phi |_{2} + (1+|u|_{L^{\infty}_T\bH^1})^2 (t-s) |J^{\eta} \phi|_1 + |u|_{L^{\infty}_T\bH^1} \omega_{\clA}(s,t)^{\frac{1}{p}} | J^{\eta} \phi |_{0} + |u|_{L^{\infty}_T\bH^1} \omega_{\clA}(s,t)^{\frac{2}{p}} | J^{\eta} \phi |_{1} \\
& \lesssim \left( \omega_{\natural }(s,t)^{\frac{3}{p}} \eta^{-2}  + (1+|u|_{L^{\infty}_T\bH^1})^2 (t-s) \eta^{-1}  + |u|_{L^{\infty}_T\bH^1} \omega_{\clA}(s,t)^{\frac{1}{p}}  + |u|_{L^{\infty}_T\bH^1} \omega_{\clA}(s,t)^{\frac{2}{p}} \eta^{-1}  \right)  |\phi |_{0} ,
\end{align*}
for all $(s,t)\in \Delta_T$ such that $\varpi(s,t)\le L$.
Setting $\eta = \omega_{\natural }(s,t)^{\frac{1}{p}} + \omega_{\clA}(s,t)^{\frac{1}{p}} + (t-s)^{\frac{1}{p}}$ and choosing $\tilde{L}>0$ such that $\eta \in (0,1]$, we get
\begin{align*}
|\delta u_{st} |_{0} & \lesssim_{p} (1+  |u|_{L^{\infty}_T\bH^1})^2 \left(  \omega_{\natural }(s,t) + (t-s) +   \omega_{\clA}(s,t)\right)^{\frac{1}{p}},
\end{align*}
which proves the claim.
\end{proof}

The following lemma  shows that the solution $u$ is controlled by $\clA^{1}$ so that we may construct the rough integral $Q \int_0^{\cdot}  \big(\sigma_k \cdot\nabla  + (\nabla \sigma_k) \big) u_r \,dz_r^k$ needed to recover the pressure.

\begin{lemma} \label{AprioriVariation1}
Assume that $u$ is a solution of \eqref{eq:RNSDiffForm} in the sense of Definition \ref{def:RNSStrongSol}. Then there is a constant $\tilde{L}>0$, depending only on $p$ and $d$,   such that for all $(s,t)\in \Delta_T$ with $\varpi(s,t)\le L$, $\omega_{A}(s,t)\leq \tilde{L}$, and $\omega_{\natural}(s,t)\leq \tilde{L}$, it holds that
$$
\omega_\sharp(s,t) \lesssim_{p}  (1 + |u|_{L^{\infty}_T\bH^1})^{\frac{2}{p}} (\omega_{\natural }(s,t) + (t-s)+ \omega_{\clA}(s,t)), 
$$
where $\omega_\sharp (s,t) := |u^\sharp|^\frac{p}{2}_{\frac{p}{2} - \textnormal{var}; [s,t];\bH^{-1}}$.
\end{lemma}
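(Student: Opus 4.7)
The plan is to adapt the smoothing argument used in the proof of Lemma \ref{AprioriVariation}, now exploiting the fact that in $u^\sharp_{st}:=\delta u_{st}-\clA^{1}_{st}u_s$ the $1/p$-irregular piece $\clA^{1}u_s$ has already been subtracted, so only the genuinely $2/p$-regular pieces remain. Concretely, from \eqref{eq:defin of sharp} one has the two representations
$$
u^{\sharp}_{st}=\delta u_{st}-\clA^{1}_{st}u_s=\delta\mu_{st}+\clA^{2}_{st}u_s+u^{\natural}_{st},
$$
and the idea is to pair each side against complementary parts of a test function $\phi\in\bH^{1}$ via a smoothing operator $J^{\eta}$, with $\eta\in(0,1]$ to be chosen.

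First, on the smooth part $J^{\eta}\phi$, I would use the right-hand representation, control the first two terms directly in $\bH^{-1}$ by means of \eqref{DriftLipschitz} and the URD bounds $|\clA^{2,*}_{st}\psi|_{-1}\lesssim\omega_{\clA}(s,t)^{2/p}|\psi|_{1}$, and control the remainder via $|u^{\natural}_{st}|_{-2}\leq\omega_{\natural}(s,t)^{3/p}$ together with $|J^{\eta}\phi|_{2}\lesssim\eta^{-1}|\phi|_{1}$ from \eqref{smoothingOperator}. This yields
$$
|u^\sharp_{st}(J^\eta \phi)|\lesssim\bigl[(1+|u|_{L^\infty_T\bH^1})^2(t-s)+|u|_{L^\infty_T\bH^1}\omega_{\clA}(s,t)^{2/p}+\omega_{\natural}(s,t)^{3/p}\eta^{-1}\bigr]|\phi|_1.
$$
On the rough part $(I-J^{\eta})\phi$, I would instead use the left-hand representation $u^\sharp_{st}=\delta u_{st}-\clA^{1}_{st}u_s$, combined with the sharper smoothing bounds $|(I-J^{\eta})\phi|_{-1}\lesssim\eta^{2}|\phi|_{1}$ and $|(I-J^{\eta})\phi|_{0}\lesssim\eta|\phi|_{1}$. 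The first smoothing bound is the crucial one: since $\delta u_{st}\in\bH^{1}$ with $|\delta u_{st}|_{1}\leq 2|u|_{L^\infty_T\bH^1}$, duality gives $|\delta u_{st}((I-J^{\eta})\phi)|\lesssim |u|_{L^\infty_T\bH^1}\eta^{2}|\phi|_{1}$; for the $\clA^{1}$ term, $|(\clA^{1}_{st}u_s)((I-J^{\eta})\phi)|\lesssim |u|_{L^\infty_T\bH^1}\omega_{\clA}(s,t)^{1/p}\eta|\phi|_{1}$.

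Next, I would optimize by choosing $\eta=\omega_{\natural}(s,t)^{1/p}$ to balance $\omega_{\natural}^{3/p}\eta^{-1}$ against $\eta^{2}$; provided $\tilde L\leq 1$, the hypothesis $\omega_{\natural}(s,t)\leq\tilde L$ keeps $\eta\in(0,1]$. Using the elementary bound $\omega_{\clA}(s,t)^{1/p}\omega_{\natural}(s,t)^{1/p}\leq (\omega_{\clA}(s,t)+\omega_{\natural}(s,t))^{2/p}$ gives
$$
|u^\sharp_{st}|_{-1}\lesssim (1+|u|_{L^\infty_T\bH^1})^{2}\bigl[(t-s)+\omega_{\natural}(s,t)^{2/p}+\omega_{\clA}(s,t)^{2/p}\bigr].
$$
Raising to the $p/2$-th power, applying the convexity inequality $(a+b+c)^{p/2}\leq C_p(a^{p/2}+b^{p/2}+c^{p/2})$ for $p\geq 2$, and shrinking $L$ so that $\varpi(s,t)\leq L$ enforces $(t-s)\leq 1$ (so $(t-s)^{p/2}\leq(t-s)$) yields a bound of the form $\omega_\sharp(s,t)\lesssim_p(1+|u|_{L^\infty_T\bH^1})^{?}(\omega_{\natural}(s,t)+(t-s)+\omega_{\clA}(s,t))$, which is exactly a control dominating $|u^\sharp|^{p/2}_{(p/2)\text{-var};[s,t];\bH^{-1}}$.

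The main obstacle is the interpolation step: $u^{\natural}$ is a priori controlled only in $\bH^{-2}$, one level below the target $\bH^{-1}$, so the smoothing operator is essential to trade one unit of spatial regularity for a favorable power of a control in time. The new feature compared to Lemma \ref{AprioriVariation} is that, because the target space is weaker, one can use $|(I-J^{\eta})\phi|_{-1}\lesssim\eta^{2}|\phi|_{1}$ to exploit the full $\bH^{1}$-regularity of $\delta u$; this is what produces an estimate at the natural level $2/p$ expected from the $\clA^{2}$-term in the expansion.
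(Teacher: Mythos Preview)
Your proof follows the same smoothing strategy as the paper's: split $\phi=J^\eta\phi+(I-J^\eta)\phi$, use the representation $u^\sharp=\delta\mu+\clA^2 u+u^\natural$ against the smooth piece and $u^\sharp=\delta u-\clA^1 u$ against the rough piece, then optimize in $\eta$. The paper chooses $\eta=\omega_\natural(s,t)^{1/p}+\omega_\clA(s,t)^{1/p}+(t-s)^{1/p}$ instead of your $\eta=\omega_\natural(s,t)^{1/p}$; either works, though yours has the cosmetic defect that $\eta$ may vanish, and your observation that the drift term needs no factor $\eta^{-1}$ (since $|J^\eta\phi|_1\le|\phi|_1$ already) is in fact sharper than the paper's written estimate.

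One inaccuracy: you cannot ``shrink $L$ so that $\varpi(s,t)\le L$ enforces $(t-s)\le 1$'', since $\varpi$ and $L$ are fixed by Definition~\ref{def:RNSStrongSol} and $\varpi$ need not dominate $t-s$. This does not matter for the argument: $(t-s)^{p/2}$ is itself a control (as $p\ge 2$), so your penultimate display already gives a control dominating $|u^\sharp|^{p/2}$, which is all that is required.
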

\begin{proof}
For all  $\eta \in (0,1]$, $(s,t)\in \Delta_T$ and $\phi \in \bH^1$, we have
$$
u^\sharp_{st} (\phi)  = u^\sharp_{st}(J^{\eta} \phi)+ u^\sharp_{st}((I - J^{\eta})\phi) .
$$
We recall \eqref{eq:defin of sharp}, giving two expressions for $u^\sharp$.
As explained above, we employ the first formula to estimate the non-smooth part and the second one to estimate  the smooth part. Applying \eqref{smoothingOperator}, we find
\begin{align*}
|u^\sharp_{st}((I - J^{\eta})\phi)|&\leq|\delta u_{s t}((I - J^{\eta})\phi)|+|u_s(\clA^{1,*}_{st}(I - J^{\eta})\phi)| \\
&\le |u|_{L^{\infty}_T\bH^1}|(I - J^{\eta})\phi|_1 +|u|_{L^{\infty}_T\bH^1}\omega_{\clA}(s,t)^\frac{1}{p} |(I - J^{\eta})\phi|_0 \\
&\lesssim \Big(\eta^2 |u|_{L^{\infty}_T\bH^1} +\eta |u|_{L^{\infty}_T\bH^1}\omega_{\clA}(s,t)^\frac{1}{p} \Big)|\phi|_1 .
\end{align*}
In order to estimate  the non-smooth part, we apply  \eqref{smoothingOperator} to obtain
\begin{align*}
|u^\sharp_{st}(J^{\eta} \phi)| & \leq |u_{st}^{\natural}  (J^{\eta} \phi) | + |\delta \mu_{st} (J^{\eta} \phi)| +  |u_s (\clA^{2,*}_{st} J^{\eta} \phi) | \\
& \lesssim\omega_{\natural }(s,t)^{\frac{3}{p}} |J^{\eta} \phi |_{2} + (1 + |u|_{L^{\infty}_T \bH^1})^2(t-s) |J^{\eta} \phi|_1 +  |u|_{L^{\infty}_T\bH^1} \omega_{\clA}(s,t)^{\frac{2}{p}} | J^{\eta} \phi |_{1} \\
& \leq \left( \omega_{\natural }(s,t)^{\frac{3}{p}} \eta^{-1}  + (1 + |u|_{L^{\infty}_T \bH^1})^2(t-s) \eta^{-1}  + |u|_{L^{\infty}_T\bH^1} \omega_{\clA}(s,t)^{\frac{2}{p}}  \right)  |\phi |_{1} ,
\end{align*}
for all $(s,t)\in \Delta_T$ with $\varpi(s,t)\le L$.
Setting $\eta = \omega_{\natural }(s,t)^{\frac{1}{p}} + \omega_{\clA}(s,t)^{\frac{1}{p}} + (t-s)^{\frac{1}{p}}$ and choosing $\tilde{L}>0$ such that $\eta \in (0,1]$, we find
\begin{align*}
| u^\sharp_{st} |_{-1}  \lesssim_{p} (1+  |u|_{L^{\infty}_T\bH^1})^2 \left(  \omega_{\natural }(s,t) + (t-s) + \omega_{\clA}(s,t)\right)^{\frac{2}{p}},
\end{align*}
which proves the claim.
\end{proof}

Finally, we have all in hand to show how to recover the pressure in the original equation \eqref{RNS2}. The computation in the proof shows why \eqref{quasiChen} is the correct Chen's relation for this system.

\label{p}
\begin{proof}[Proof of Lemma \ref{lem:PressureRecovery}]

We first show that we can construct the rough integral $$I_t=Q\int_0^t \big[ \sigma_k \cdot\nabla + (\nabla \sigma_k) \big] u_r \, d z^k_r, \quad I_0=0,$$ using the sewing lemma, \cite[Lemma 2.1]{DeGuHoTi16}. Let
$
h_{st} = \clA^{Q,1}_{st}u_s+ \clA^{Q,2}_{st}u_s
$
for $(s,t)\in \Delta_T$. It follows that   $h\in C_2^{p-\textnormal{var}}([0,T];\bH_{\perp}^{-1})$.
Straightforward computations show that, using \eqref{quasiChen}, we have $\delta h_{s\theta t} = -\clA^{Q,1}_{\theta t}u^\sharp_{s\theta}-\clA^{Q,2}_{\theta t} \delta u_{s\theta}$.
Owing to Lemmas \ref{AprioriVariation} and  \ref{AprioriVariation1} there are controls $\omega$  and $\varpi$ and an $L>0$  such that for all $(s,\theta,t)$ with $\varpi(s,t)\le L$, we have
$$
|\delta h_{s\theta t}|_{-2}\lesssim_p  \left(\omega_{\clA}(s,t)^{\frac{1}{3}}\omega_{\sharp}(s,t)^{\frac{2}{3}}+\omega_{\clA}(s,t)^{\frac{2}{3}}\omega_{u}(s,t)^{\frac{1}{3}}\right)^{\frac{3}{p}}=:\omega(s,t)^{\frac{3}{p}}.
$$
Therefore, by the sewing lemma, Lemma 2.1 \cite{DeGuHoTi16},  there exists a unique path $I\in C^{p-\textnormal{var}}([0,T];\bH^{-2}_{\perp})$ and a two-index map $ I^{\natural}\in  C_{2,\varpi,L}^{p-\textnormal{var}}([0,T];\bH_{\perp}^{-2})$ such that 
$$
\delta I_{st}= \clA^{Q,1}_{st}u_s+ \clA^{Q,2}_{st}u_s + I^\natural_{st} . 
$$
and
$| I^\natural_{s t}|_{-2}\lesssim_p \omega(s,t)^\frac{3}{p}.$  

Defining
$
\pi_t :=  -  \int_0^t B_Q(u_r) \, dr   + I_t ,
$
gives exactly \eqref{eq:RNSRoughFormPi} with $u^{Q, \natural} := I^{\natural}$. 
A direct estimate shows that $\pi \in C^{p - var}([0,T]; \bH_{\perp}^{-2})$.
\end{proof}

When proving existence using a Galerkin approximation, we will use Definition \ref{def:VortSol} to find estimates as indicated by Theorem \ref{thm:UniqandEnEq2d}, since $\xi$ satisfies an enstrophy balance. 
However, using the Biot-Savart law  only yields an estimate on the mean-free part of the velocity $v = u - \bar{u}$.
The next lemma shows how to bound the mean, $\bar{u}$, in terms of $v$.

\begin{lemma} \label{lem:aPrioriMean}
Assume $(\xi,\bar{u})$ is a solution of \eqref{eq:vv} up to time $T$. Then there exists a constant $C$ depending only on $\omega_{Z}$ and $p$ such that
\begin{equation} \label{eq:aPrioriMean}
|\bar{u}|_{L^{\infty}_T \bR^d} \leq C \exp \left\{ C(1 + |v|_{L^{\infty}_T \bH^1})^p \right\} \left(1 +   |\bar{u}_0|  \right).
\end{equation}
\end{lemma}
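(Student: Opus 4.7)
The plan is to control the scalar increment $|\delta \bar u_{st}|$ on short intervals and then iterate over a partition of $[0,T]$. From Definition \ref{def:VortSol} combined with the estimates \eqref{LBounds},
$|\delta \bar u_{st}| \leq C|v|_{L^\infty_T\bH^1}\bigl(\omega_Z(s,t)^{1/p} + \omega_Z(s,t)^{2/p}\bigr) + |\bar u^{\natural}_{st}|,$
so the task reduces to controlling the remainder $\bar u^{\natural}$, which I do via the sewing lemma \cite[Lemma 2.1]{DeGuHoTi16}.

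Applying sewing to $h_{st} := L^1_{st}(v_s) + L^2_{st}(v_s)$ requires a bound $|\delta h_{s\theta t}| \leq \omega(s,t)^{\alpha}$ for some $\alpha > 1$. Using additivity of $Z$ for $L^1$ and Chen's relation $\delta \mathbb{Z}_{s\theta t} = Z_{s\theta}\otimes Z_{\theta t}$ for $L^2$, one computes $\delta h_{s\theta t} = -L^1_{\theta t}(\delta v_{s\theta}) - L^2_{\theta t}(\delta v_{s\theta}) + N_{s\theta,\theta t}(v_s),$ where $N$ is an explicit $Z_{s\theta}\otimes Z_{\theta t}$-term of scaling order $\omega_Z^{2/p}$. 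A naive bound on $L^1_{\theta t}(\delta v_{s\theta})$ and on $N$ gives only total scaling exponent $2/p \leq 1$, which is insufficient for $p \in [2,3)$. The resolution is to use the controlled-path structure of $v$: applying $\BS$ to the rough-path expansion $\delta \xi_{s\theta} = $ drift $+A^1_{s\theta}\xi_s + A^2_{s\theta}\xi_s + \xi^{\natural}_{s\theta}$ from Definition \ref{def:VortSol} identifies the leading part of $\delta v_{s\theta}$ as $\BS A^1_{s\theta}\xi_s$, which is genuinely of order $Z_{s\theta}$. Substituting this into $L^1_{\theta t}(\delta v_{s\theta})$ produces a $Z_{s\theta}\otimes Z_{\theta t}$-contribution which, by the very form of $L^2$, combines with $N_{s\theta,\theta t}(v_s)$ to give a higher-order remainder, while the remaining pieces (coming from the drift and $\BS\xi^{\natural}_{s\theta}$, and from the $L^2_{\theta t}$-term) each have total scaling strictly greater than one. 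Sewing then delivers $|\bar u^{\natural}_{st}| \leq C\omega(s,t)^{3/p}$ for a suitable control $\omega$ built from $\omega_Z$ and the $p$-variation of $v$.

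Lemma \ref{AprioriVariation} applied to $u$, together with the identity $|u|^2_{\bH^1} = |v|^2_{\bH^1} + |\bar u|^2$ that follows from $v$ being mean-free, gives control on the $p$-variation $\omega_v$ in terms of $(1+|v|_{L^\infty_T\bH^1}+|\bar u|_{L^\infty_{[0,t]}})^{2p}$ and the rough-path/drift data. Combining with the previous step and the direct bound on $\delta \bar u$, we obtain, on sub-intervals of $\omega_Z$-length $\leq L_0$ for $L_0$ small,
$|\bar u_t - \bar u_s| \leq c_1(1+|v|_{L^\infty_T\bH^1})^p L_0^{1/p} + c_2\bigl(1+|\bar u|_{L^\infty_{[0,t]}}\bigr)^p L_0^{3/p}.$
Partitioning $[0,T]$ into $N \leq 1 + \omega_Z(0,T)/L_0$ such sub-intervals, choosing $L_0$ small enough to absorb the $|\bar u|$-term on the left, and iterating the resulting affine discrete recurrence gives $|\bar u|_{L^\infty_T} \leq (1+\kappa)^N\bigl(|\bar u_0| + N\mu(1+|v|_{L^\infty_T\bH^1})^p\bigr)$. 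Since $N$ is bounded by a constant depending only on $\omega_Z$, absorbing $N$ into the constants and majorizing the resulting polynomial-in-$(1+|v|)^p$ bound by the exponential yields \eqref{eq:aPrioriMean}.

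The main obstacle is the combinatorial matching in the sewing step: one must verify that the $Z_{s\theta}\otimes Z_{\theta t}$-term arising from Chen's relation on $\mathbb{Z}$ in $L^2$ and the one arising from iterating the controlled expansion of $\delta v_{s\theta}$ into $L^1_{\theta t}$ combine to produce a higher-order remainder. This cancellation, which ultimately reflects the geometric (Lie-derivative) origin of the $L^1, L^2$-pair, is what elevates the scaling exponent above $1$ and makes the sewing lemma applicable.
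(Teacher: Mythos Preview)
Your approach diverges from the paper's, and there is a genuine gap in the final iteration.

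The paper does not sew at the level of $\bar u$ at all. Instead it observes, via Lemma~\ref{lem:EquivalenceDefinitions}, that $\bar u^{\natural,m}=u^{\natural}(e_m)$, and then applies the abstract remainder estimate Theorem~\ref{Thm2.5Abstract} to the full velocity equation on the scale $(\bH^n)_n$ with $g=u$ viewed in $E^{-0}=\bH^0$. This yields
\[
|\bar u^{\natural}_{st}|\le |u^{\natural}_{st}|_{-3}\lesssim |u|_{L^\infty([s,t];\bH^0)}\,\omega_{\clA}(s,t)^{3/p}+\omega_\mu(s,t)\,\omega_{\clA}(s,t)^{1/p},
\]
which is \emph{linear} in $|u|_{L^\infty\bH^0}$. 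The drift bound $|B_P(u,u)|_{-1}\lesssim |\bar u|\,|v|_1+|v|_1^2$ is likewise linear in $|\bar u|$. Combining these with \eqref{LBounds} gives an increment inequality of the form $\delta(|\bar u|)_{st}\lesssim |\bar u|_{L^\infty[s,t]}(1+|v|_1)\omega_Z(s,t)^{1/p}+(1+|v|_1)^2\omega_Z(s,t)^{1/p}$, to which the rough Gronwall Lemma~\ref{lem:RoughGronwall} applies directly, producing the exponential bound.

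Your route, by contrast, invokes Lemma~\ref{AprioriVariation} to control the $p$-variation of $v$ entering the sewing estimate. But that lemma gives $\omega_u(s,t)\lesssim (1+|u|_{L^\infty_T\bH^1})^{2p}(\cdots)$, and since $|u|_1^2=|v|_1^2+|\bar u|^2$ this introduces a dependence on $|\bar u|$ of power $\ge 2$ into your bound for $|\bar u^{\natural}_{st}|$ and hence for $|\delta\bar u_{st}|$. The resulting increment inequality is therefore \emph{not} affine in $|\bar u|$: the term you write as $c_2(1+|\bar u|_{L^\infty_{[0,t]}})^pL_0^{3/p}$ cannot be ``absorbed on the left'' by choosing $L_0$ small, because the required smallness of $L_0$ would depend on the very quantity $|\bar u|_{L^\infty}$ you are trying to bound, and the number $N$ of subintervals grows as $L_0\to 0$. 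A continuity/bootstrap argument might salvage this, but it is not the affine iteration you claim, and closing it is not obvious.

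The cleaner fix is exactly the paper's: bypass the $p$-variation of $v$ entirely by using Theorem~\ref{Thm2.5Abstract} on $u$ (one regularity level lower than Lemma~\ref{Thm2.5}), which keeps everything linear in $|\bar u|$ and feeds directly into the rough Gronwall lemma.
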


\begin{proof}
From Theorem \ref{Thm2.5Abstract} we get (notice the decreased spatial regularity)
\begin{align} \label{YetAnotherAprioriEstimate}
|u_{st}^{\natural}|_{-3} &  \lesssim |u|_{L^{\infty}([s,t];\bH^0)} \omega_{\clA}(s,t)^{\frac{3}{p}} +  \omega_{\clA}(s,t)^{\frac{1}{p}} |\mu|_{1-var; [s,t]; \bH^{-1}}
\end{align}
where $\mu_t$ from \eqref{eq:DriftInH0} is regarded as a bounded variation path with values in $\bH^{-1}$. By the bilinearity of $B_P$ and since $\nabla u = \nabla v$ we write
$$
|B_P(u,u)|_{-1} = |B_P(u,v)|_{-1} \leq  |B_P(\bar{u},v)|_{-1} + |B_P(v,v)|_{-1} \lesssim |\bar{u}| |v|_{1} + |v|_{1}^2
$$
where the last inequality comes from setting $m_1 = m_3 = 1$ and $m_2 = 0$ in \eqref{trilinear form estimate}. This gives the bound
$$
\left|\delta\mu_{st} \right|_{-1} \lesssim |\bar{u}|_{L^{\infty}([s,t])} |v|_{L^{\infty}([s,t]; \bH^1)} (t-s) +  |v|_{L^{\infty}([s,t]; \bH^1)}^2 (t-s) .
$$

Moreover, recall the definition of $\bar{u}$, \eqref{eq:VortRoughForm}. From \eqref{YetAnotherAprioriEstimate} and Lemma \ref{lem:EquivalenceDefinitions} we get that 
$$
|\bar{u}_{st}^{\natural}| \leq |u_{st}^{\natural}|_{-3} \lesssim |\bar{u}|_{L^{\infty}([s,t])} (1 + |v|_{L^{\infty}([s,t]; \bH^1)})  \omega_{\clA}(s,t)^{\frac{1}{p}} +  (1+ |v|_{L^{\infty}([s,t]; \bH^1)})^2  \omega_{\clA}(s,t)^{\frac{1}{p}}.
$$

Using $ |a| - |b| \leq | |a| - |b| | \leq |a-b| $ in \eqref{eq:VortRoughForm} and the bounds \eqref{LBounds} we get 
\begin{align*}
\delta (|\bar{u}|)_{st} & \lesssim    |\bar{u}|_{L^{\infty}([s,t])} (1 + |v|_{L^{\infty}([s,t]; \bH^1)})  \omega_{Z}(s,t)^{\frac{1}{p}} + (1+ |v|_{L^{\infty}([s,t]; \bH^1)})^2  \omega_{Z}(s,t)^{\frac{1}{p}}
\end{align*}

Applying Lemma \ref{lem:RoughGronwall} with $\omega = (1 + |v|_{L^{\infty}([0,T]; \bH^1)})^{p}  \omega_{Z}$ and $\phi(s,t) = (1 + |v|_{L^{\infty}([0,T]; \bH^1)})^2 \omega_{Z}(s,t)^{\frac{1}{p}}$ we get that 
$$
|\bar{u}|_{L^{\infty}([0,T])} \leq 2 \exp \left\{ \frac{\omega(0,T)}{L\alpha} \right\} ( |\bar{u}_0| + K (1 + |v|_{L^{\infty}([0,T]; \bH^1)})^2 \omega_{Z}(0,T)^{\frac{1}{p}} )
$$
and the result follows.
\end{proof}

\section{Enstrophy balance and uniqueness in two spatial dimensions} \label{section:ProofOfTheoremUniqueness}

This section is devoted to the  proof of Theorem \ref{thm:UniqandEnEq2d} and Corollary \ref{cor:stability}, which we split into three parts. First, we establish the enstrophy balance \eqref{eq:EnergyEquality} in Section \ref{sec:Energy}. Second, we prove uniqueness in Section \ref{sec:Uniq}. Thirdly, we show stability in  Section \ref{s:stab}.

Throughout this section, we let $d=2$. In particular, the vorticity $\xi$ is scalar valued and consequently the associated function spaces contain functions that are scalar valued. Since the dimension  will always be clear from the context, we do not alter the notations introduced in Section \ref{ss:notation}.

\subsection{Enstrophy balance} \label{sec:Energy}

In the classical setting, to show \eqref{eq:EnergyEquality}, one would test \eqref{eq:vv} by the solution $\xi$ and use that $u$ and $\sigma_k$ are divergence-free.
Since $\xi^{\natural}$ is not expected to be better behaved than a spatial distribution, one cannot directly test the equation by the solution itself. Instead, we employ a standard trick in PDE theory, namely, the  doubling of variables technique. 
Define the tensor $\xi \otimes \zeta(x,y) := \xi(x) \zeta(y)$, the symmetric tensor $\xi \st \zeta = \frac12 ( \xi \otimes \zeta + \zeta \otimes \xi)$ and the scale of Sobolev spaces $W_{\otimes}^{n,2} := W^{n,2}( \bT^2 \times \bT^2)$.

Variations of the following result have already been proved in \cite{HH17}, \cite{DeGuHoTi16}, \cite{HN} and \cite{HLN}, so we omit the proof.

\begin{proposition}
The mapping $\xi^{\otimes 2} :[0,T] \rightarrow W^{0,2}_{\otimes}$ satisfies the equation
\begin{equation} \label{eq:tensor}
\delta \xi^{\otimes 2}_{st} = \int_s^t 2 \big[  \vartheta \xi_r \st \Delta \xi_r  - \xi_r \st u_r \nabla \xi_r \big] dr + (\Gamma_{st}^1 + \Gamma_{st}^2) \xi_{st}^{\otimes 2} + \xi_{st}^{\otimes 2, \natural} 
\end{equation}
in $W^{-3,2}_{\otimes}$.
Here $(\Gamma^1,\Gamma^2)$ is the unbounded rough driver on $(W^{n,2}_{\otimes})_{n}$ defined by the second quantization 
$$
\Gamma^1:=A^{1}\otimes I + I\otimes A^{1}, \quad \Gamma^2:= A^{2}\otimes I + I \otimes A^{2}+A^{1}\otimes A^{1},
$$
and $\xi^{\otimes 2, \natural} \in C^{ \frac{p}{3}  -\textnormal{var}}_{2, \varpi, L}([0,T]; W^{-3,2}_{\otimes})$, for a control $\varpi$ and $L>0$. 
\end{proposition}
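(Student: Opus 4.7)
The plan is to derive \eqref{eq:tensor} by the doubling-of-variables technique, which is by now standard for unbounded rough driver systems and has been carried out in detail in the references cited above. Formally, if the vorticity were a smooth path in time, one would have the product-rule identity
\begin{equation*}
\delta \xi^{\otimes 2}_{st} = \delta \xi_{st} \otimes \xi_s + \xi_s \otimes \delta \xi_{st} + \delta \xi_{st} \otimes \delta \xi_{st},
\end{equation*}
and substituting the vorticity expansion from Definition \ref{def:VortSol} into each occurrence of $\delta \xi$ would yield precisely \eqref{eq:tensor} after grouping terms. The rigorous argument proceeds by defining the candidate remainder via
\begin{equation*}
\xi^{\otimes 2, \natural}_{st}(\phi \otimes \psi) := \delta \xi^{\otimes 2}_{st}(\phi \otimes \psi) - \int_s^t 2\bigl[\vartheta \xi_r \st \Delta \xi_r - \xi_r \st u_r \nabla \xi_r\bigr](\phi \otimes \psi)\, dr - (\Gamma^1_{st} + \Gamma^2_{st})\xi_s^{\otimes 2}(\phi \otimes \psi)
\end{equation*}
for $\phi, \psi \in \dot\bH^3$, and extending by bilinearity and density to all of $W_\otimes^{3,2}$.

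Next I would confirm the identification of the rough driver $\Gamma = (\Gamma^1,\Gamma^2)$. The first-order contribution coming from $\delta \xi_{st} \otimes \xi_s + \xi_s \otimes \delta \xi_{st}$ is exactly $\Gamma^1_{st}\xi_s^{\otimes 2}$, while the second-order contribution has three pieces: $A^2_{st}\xi_s \otimes \xi_s$ from the left factor, its transpose from the right factor, and crucially the cross term $A^1_{st}\xi_s \otimes A^1_{st}\xi_s$ arising from $\delta \xi_{st} \otimes \delta \xi_{st}$. Together these combine into $\Gamma^2_{st}\xi_s^{\otimes 2}$. Chen's relation $\delta \Gamma^2_{s\theta t} = \Gamma^1_{\theta t}\Gamma^1_{s\theta}$ then follows by a direct computation: using $\delta A^1 = 0$ and additivity $A^1_{st} = A^1_{s\theta} + A^1_{\theta t}$ one finds $\delta(A^1 \otimes A^1)_{s\theta t} = A^1_{s\theta} \otimes A^1_{\theta t} + A^1_{\theta t} \otimes A^1_{s\theta}$, which combined with the tensored Chen relations for $A^2$ yields the claim. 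The bounds \eqref{ineq:UBRcontrolestimates} lift to the second quantization because tensor products of bounded operators on $\dot\bH^n$ are bounded on $W_\otimes^{n,2}$ with norm the product of the factor norms.

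The main technical step is the remainder estimate. After substitution, $\xi^{\otimes 2, \natural}_{st}$ collects products of drifts, products of drifts with first-order rough increments, products involving $\xi^\natural_{st}$, and the nested piece obtained from expanding $A^1$ once more inside $A^2$. Each can be bounded in $W_\otimes^{-3,2}$ by testing against $\phi \otimes \psi$ with $|\phi|_n |\psi|_m$ for $n+m \leq 3$ and invoking the a priori regularity $\xi \in L^\infty_T \dot\bH^0 \cap L^2_T \dot\bH^1$, the remainder bound for $\xi^\natural$, the drift bound for $\mu$, and the controls $\omega_A, \omega_\natural$. A smoothing/sewing argument in the spirit of the proof of Lemma \ref{Thm2.5}, applied to the control $\omega \gtrsim \omega_A + \omega_\natural + \omega_\mu$ and using the tensor-product smoothing family on $W_\otimes^{n,2}$ built from the frequency cut-off on each factor, then gives the local bound $|\xi^{\otimes 2, \natural}_{st}|_{W_\otimes^{-3,2}} \lesssim \omega(s,t)^{3/p}$, equivalently the required $C^{p/3-\textnormal{var}}_{2,\varpi,L}$ regularity.

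The main obstacle is the careful splitting of the quadratic increment $\delta \xi_{st} \otimes \delta \xi_{st}$: the cross term $A^1_{st}\xi_s \otimes A^1_{st}\xi_s$ must be absorbed into $\Gamma^2$ rather than into the remainder, both because it carries only order $\omega_A(s,t)^{2/p}$ (insufficient for a $3/p$ bound) and because placing it elsewhere would break Chen's relation. All other quadratic mixed terms, such as $A^1_{st}\xi_s \otimes \xi^\natural_{st}$, carry combined weight of order at least $\omega(s,t)^{4/p}$ and are safely absorbed into the remainder. A secondary point is to verify that the definition of $\xi^{\otimes 2, \natural}$ on tensor products extends unambiguously to all of $W_\otimes^{3,2}$, which follows from the density of $\dot\bH^3 \otimes \dot\bH^3$ in $W_\otimes^{3,2}$ together with the uniform bilinear bounds obtained in the previous step.
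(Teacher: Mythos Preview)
Your proposal is correct and follows precisely the standard doubling-of-variables approach that the paper itself defers to (the paper omits the proof, citing \cite{HH17}, \cite{DeGuHoTi16}, \cite{HN} and \cite{HLN}). Your identification of the second-quantized driver, the verification of Chen's relation for $\Gamma$, and the crucial observation that the cross term $A^1_{st}\xi_s \otimes A^1_{st}\xi_s$ must be placed in $\Gamma^2$ rather than the remainder are exactly the points underlying those references.
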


The next step is to test $\xi^{\otimes 2}$ against an approximation of $\delta_{x=y}$ so that we can justify the testing of $\xi$ against itself; that is, to justify the evaluation of $\xi\otimes\xi$ at the diagonal $x=y$. As usual, within our framework, in order to obtain estimates we shall  rewrite the approximation in the standard form \eqref{eq:AbstractURDEquation} and use Theorem~\ref{Thm2.5Abstract}. The two ingredients in Theorem \ref{Thm2.5Abstract}--the scale of spaces and the corresponding family of smoothing operators--will be constructed in this section. The approximation of $\delta_{x=y}$ we shall use will increase the support of the solution, which could pose a problem when working for instance on a bounded domain other than $\bT^{d}$. However, since we are on the torus, the unit ball $B(0,1)$ is a subset of $\bT^d$, so for any integrable and periodic function $f$ we have 
$$
\int_{B(0,1) + \bT^d} |f(x)| dx \leq \int_{2 \bT^d} |f(x)| dx = 2 \int_{\bT^d} |f(x)| dx,
$$
which means that the increase of the support of integration is always a continuous operation on $L^1(\bT^d)$.

Introduce the coordinates $
x_+ := \frac{x + y}{2} $ and $ x_- := \frac{x - y}{2}
$
and denote by $\nabla_{\pm} := \nabla_x \pm \nabla_y$. We consider test functions that are periodic in the $x_{+}$ direction and compactly supported in the $x_{-}$ direction. More precisely, we define the spaces
\begin{equation} \label{def:E-scale}
\mathcal{E}^n_{\nabla} :=  \left\{ \Phi \in W^{n, \infty}( \bR^d \times \bR^d) :  \Phi(x + k 2 \pi, y + k 2\pi) = \Phi(x,y),\; \forall k \in \mathbf{Z}^d, \textrm{ and } |x_-| \geq 1 \Rightarrow \Phi(x,y) = 0 \right\} 
\end{equation}
equipped with the norm 
$$
|\Phi|_{n, \nabla } := \max_{k+l \leq n} \textrm{esssup} \left\{  |\nabla_+^k \nabla_-^l \Phi(x_+ + x_-, x_+ - x_-)| : x_+ \in \bT^d, x_- \in \bR^d \right\}
$$
and the dual pairing between $\mathcal{E}^{-n}_{\nabla}$ and $\mathcal{E}^{n}_{\nabla}$ given by
$$
\langle \Phi, \Psi \rangle_{\nabla} = \int_{\bT^d} \int_{\bR^d} \Phi(x_+ + x_-,  x_+ - x_-) \Psi(x_+ + x_-,  x_+ - x_-) dx_- dx_+ .
$$
Notice that the test functions in \eqref{def:E-scale} are not periodic in the original variables $x,y$ separately, only in $x_{+}$. In addition, due to the compact support of the test functions in the $x_{-}$ variable, the domain of integration in the duality product above can be written in the $(x,y)$-coordinates and is equal to
$$
\Omega = \left\{ (x,y) : x_+ \in \bT^d , x_- \in B(0,1) \right\}.
$$
Define  the blow-up transformation 
$$
T_{\epsilon} \Phi(x,y) = \epsilon^{-d} \Phi \left(x_+ + \frac{x_-}{\epsilon},  x_+ - \frac{x_-}{\epsilon} \right).
$$
and notice that its dual with respect to $\langle \cdot, \cdot \rangle_{\nabla}$ it is given by 
$$
T^*_{\epsilon} \Phi(x,y) = \Phi(x_+ + \epsilon x_-,  x_+ - \epsilon x_-)
$$
and that $T_{\epsilon}^{-1} = \epsilon^d T_{\epsilon}^*$.

We shall need the following uniform estimates.

\begin{lemma} \label{lem:UniformBlowUpEstimates}
For any $f \in \bH^0$ and $g,h \in \bH^1$ we have
\begin{equation} \label{eq:BlowUpL2}
\left| T_{\epsilon}^* (f^{\otimes 2}) \right|_{-0,\nabla}  \lesssim |f|_{L^2(\bT^d)}^2   \quad \textrm{ and } \quad \left| T^*_{\epsilon} (g \otimes \Delta h) \right|_{-1, \nabla}  \lesssim |g|_{1} |h|_{1}  .
\end{equation}
\end{lemma}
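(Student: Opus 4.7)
The plan is to estimate the duality pairing $\langle T_\epsilon^* \Phi, \Psi \rangle_\nabla$ against test functions $\Psi$ in $\mathcal{E}^n_\nabla$ (with $n=0$ and $n=1$ respectively), using Cauchy--Schwarz in the periodic variable $x_+$ combined with translation invariance of $|\cdot|_{L^2(\bT^d)}$, plus one integration by parts for the second estimate. The central issue is that the blow-up $T_\epsilon$ does not preserve derivative orders in a $\epsilon$-uniform way for each of $\nabla_+$ or $\nabla_-$ separately; the key observation that makes the second estimate work is that a derivative applied to $h(x_+ - \epsilon x_-)$ in the variable $x_+$ reproduces $(\partial_i h)(x_+ - \epsilon x_-)$ with no $\epsilon$ factor, whereas differentiating in $x_-$ would produce an unwanted $\epsilon$.

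For the first estimate, unwind the pairing against $\Psi \in \mathcal{E}^0_\nabla$:
\[
\langle T_\epsilon^*(f^{\otimes 2}), \Psi \rangle_\nabla = \int_{\bT^d}\int_{B(0,1)} f(x_+ + \epsilon x_-)\, f(x_+ - \epsilon x_-)\, \Psi(x_+ + x_-, x_+ - x_-)\, dx_-\, dx_+.
\]
Bound $|\Psi(x_+ + x_-, x_+ - x_-)|$ pointwise by $|\Psi|_{0,\nabla}$, then for each fixed $x_-$ apply Cauchy--Schwarz in $x_+ \in \bT^d$. Since the $L^2(\bT^d)$ norm is invariant under translations $x_+ \mapsto x_+ \pm \epsilon x_-$, each factor equals $|f|_{L^2(\bT^d)}$, and the remaining integral over $x_- \in B(0,1)$ yields a dimensional constant. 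Taking the supremum over $|\Psi|_{0,\nabla} \leq 1$ gives \eqref{eq:BlowUpL2}, first inequality.

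For the second estimate, we need to shift one derivative off $h$, since we only control $|h|_1$. The crucial algebraic identity is
\[
(\Delta h)(x_+ - \epsilon x_-) = \sum_{i} \partial_{x_+^i}\big[(\partial_i h)(x_+ - \epsilon x_-)\big],
\]
which follows from the chain rule and introduces \emph{no} factor of $\epsilon^{-1}$ (this is exactly where differentiating in $x_+$ rather than $x_-$ pays off). Inserting this into the pairing and integrating by parts in $x_+$ (the torus gives no boundary terms) distributes the derivative onto $g(x_+ + \epsilon x_-)$ and onto the test function, producing two terms of the form
\[
\int (\partial_i g)(x_+ + \epsilon x_-)\,(\partial_i h)(x_+ - \epsilon x_-)\,\tilde\Psi\, dx,\qquad \int g(x_+ + \epsilon x_-)\,(\partial_i h)(x_+ - \epsilon x_-)\,(\partial_{x_+^i}\tilde\Psi)\, dx,
\]
where $\tilde\Psi(x_+,x_-) = \Psi(x_++x_-, x_+-x_-)$ and the factors $|\tilde\Psi|_\infty$ and $|\nabla_+ \tilde\Psi|_\infty$ are both controlled by $|\Psi|_{1,\nabla}$.

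Each of these two terms is then bounded by applying Cauchy--Schwarz in $x_+$ for fixed $x_-$, using translation invariance of $|\cdot|_{L^2(\bT^d)}$ as before, and integrating the resulting constant over $x_- \in B(0,1)$. The outcome is the bound $|\Psi|_{1,\nabla}\,|g|_1\,|h|_1$, which upon taking the supremum yields the second half of \eqref{eq:BlowUpL2}. The only delicate point is the chain-rule identity above; once it is in hand, the remaining estimates are routine applications of Cauchy--Schwarz.
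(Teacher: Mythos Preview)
Your proof is correct and follows essentially the same approach as the paper. The paper phrases the second estimate via the algebraic identity $g\otimes\Delta h=\nabla_+(g\otimes\nabla h)-\nabla g\otimes\nabla h$ together with the commutation $\nabla_+ T_\epsilon = T_\epsilon\nabla_+$, but this is exactly your chain-rule observation $(\Delta h)(x_+-\epsilon x_-)=\partial_{x_+^i}[(\partial_i h)(x_+-\epsilon x_-)]$ written at the tensor level before blowing up; the resulting two terms and their estimation coincide.
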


\begin{proof}
For $f \in \bH^0$ we have by H\"older's inequality
\begin{align*}
\left| \langle f^{\otimes 2}, T_{\epsilon} \Phi \rangle_{\nabla} \right| & = \left| \int_{\bR^d} \int_{\bT^d} f(x_+ + \epsilon x_-) f(x_+ - \epsilon x_-) \Phi(x_+ + x_-, x_+ - x_-) dx_+ dx_-\right| \\
& \leq \max_{ \tau \in \{-1,1\} } \int_{B(0,1)} \int_{\bT^d} |f(x_+ + \tau \epsilon x_-)|^2 dx_+ \sup_{x_+} |\Phi(x_+ + x_- , x_+ - x_-)| dx_- .
\end{align*}
Introduce the change of variables $z_{\tau} = x_+ + \tau \epsilon x_- \in \bT^d \pm \epsilon B(0,1) \subset \bT^d + B(0,1) $ so that 
$$
\left|\langle f^{\otimes 2}, T_{\epsilon} \Phi \rangle_{\nabla} \right| \leq |B(0,1)| |\Phi|_{0, \nabla} \int_{\bT^d + B(0,1)} |f(z)|^2 dz  \lesssim |\Phi|_{0, \nabla} |f|_{L^2(\bT^d)}^2  ,
$$
which proves the first estimate in \eqref{eq:BlowUpL2}.

For $ g,h \in \bH^1$ we can write
\begin{align*}
g \otimes \Delta h  & = \nabla_y (g \otimes  \nabla h) = (\nabla_+ - \nabla_x) (g \otimes  \nabla h) = \nabla_+ (g \otimes  \nabla h) -  \nabla g \otimes  \nabla h.
\end{align*}
Since $\nabla_+ T_{\epsilon} =  T_{\epsilon} \nabla_+ $ we get 
\begin{align*}
\langle T^*_{\epsilon} g \otimes \Delta h, \Phi \rangle_{\nabla}  
& = - \langle T^*_{\epsilon}  (g \otimes  \nabla h) ,  \nabla_+ \Phi \rangle_{\nabla}  - \langle T_{\epsilon}^*\nabla g \otimes  \nabla h ,  \Phi \rangle_{\nabla} \\
& =  - \int_{\bR^d} \int_{\bT^d} g(x_+ + \epsilon x_-) \nabla h(x_+ - \epsilon x_-)   \nabla_+ \Phi(x_+ + x_-, x_+ - x_-) dx_+ dx_-  \\
& \quad -  \int_{\bR^d} \int_{\bT^d} \nabla g(x_+ + \epsilon x_-) \nabla h(x_+ - \epsilon x_-)   \Phi(x_+ + x_-, x_+ - x_-) dx_+ dx_- .
\end{align*} 
Following a similar derivation of the estimate $\langle f^{\otimes 2}, T_{\epsilon} \Phi \rangle_{\nabla}$, we get
\begin{align*}
|\langle T^*_{\epsilon} g \otimes \Delta h, \Phi \rangle_{\nabla}|  & 
\lesssim |g|_{L^2( \bT^d + B(0,1))} |\nabla h|_{L^2( \bT^d + B(0,1))}  |\Phi|_{1, \nabla} + |\nabla g|_{L^2( \bT^d + B(0,1))} |\nabla h|_{L^2( \bT^d + B(0,1))}  |\Phi|_{0, \nabla} \\
& \lesssim |g|_{1} |h|_{1}  |\Phi|_{1, \nabla} ,
\end{align*}
which proves the second estimate in \eqref{eq:BlowUpL2}.
\end{proof}

The next step in order to be able to apply Theorem \ref{Thm2.5Abstract} is to construct a family of smoothing operators on the scale $(\mathcal{E}_{\nabla}^n)_n$.
Recall that $ \Omega \subset (2 \bT^d) \times (2 \bT^d)$,
so that we may choose a mollifier (in both variables), $\rho_{\eta}$, such that its support is included in $ \Omega$ and we have
$$
J^{\eta} \Phi(x,y) = \int_{\Omega} \Phi(x+u,y+v) \rho_{\eta}(u,v) du dv = \int_{(2 \bT^d) \times (2 \bT^d)} \Phi(x+u,y+v) \rho_{\eta}(u,v) du dv .
$$
It can be checked easily that $J^{\eta}$ acts as a smoothing operator on the scale $W^{n,\infty}( (2 \bT^d) \times (2 \bT^d))$.  We could try to restrict to $\mathcal{E}_{\nabla}^n$, but problem is that our test function space is constructed such that $\Phi(x,y) = 0$ when $|x_-| \geq 1$, and convolution increases this support. However, the increase cannot be too large since
$$
\supp_{x_-}(J^{\eta}\Phi) \subset \supp_{x_-}(\Phi) + \supp(\rho_{\eta})  \subset B(0,1) + B(0, \eta) \subset B(0, 1 + \eta) ,
$$
meaning that our smoothing operator is not well defined as a mapping from $\mathcal{E}_{\nabla}^n$ into itself. We work around this by introducing a function that decreases the support by $\eta$.

\begin{lemma}
There exists a family of smoothing operators $(\bar{J}^{\eta})_{\eta \in [0,1]}$ on the scale $(\mathcal{E}_{\nabla}^n)_{n}$ .
\end{lemma}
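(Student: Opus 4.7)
The plan is to combine a standard mollification with a dilation in the $x_-$-direction that shrinks the support just enough to absorb the expansion caused by convolution. Working in the $(x_+, x_-)$ coordinates, in which elements of $\mathcal{E}^n_\nabla$ are $x_+$-periodic functions supported in $\{|x_-|\le 1\}$, I fix a nonnegative, even, smooth mollifier $\rho_\eta \in C^\infty_c(B(0,\eta/3))$ on $\bR^{2d}$ with $\int\rho_\eta = 1$ and set $J^\eta \Phi := \rho_\eta * \Phi$. This $J^\eta$ is self-adjoint with respect to $\langle\cdot,\cdot\rangle_\nabla$ and satisfies \eqref{smoothingOperator} on the ambient $W^{n,\infty}$-scale, the only issue being that it enlarges the $x_-$-support by up to $\eta/3$.

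To restore the compact support condition, introduce the dilation
\[ U_a\Phi(x_+, x_-) := a^{d/2}\Phi(x_+, a x_-), \]
which is an isometry for $\langle \cdot,\cdot\rangle_\nabla$ with $U_a^* = U_{1/a}$, mapping $\{|x_-|\le R\}$-supported functions to $\{|x_-|\le R/a\}$-supported ones. Setting $a_\eta := (1-\eta/2)^{-1}$, a first candidate is
\[ \bar J^\eta \Phi := J^\eta U_{a_\eta}\Phi. \]
For $\Phi \in \mathcal{E}^n_\nabla$, the function $U_{a_\eta}\Phi$ has $x_-$-support in $\{|x_-|\le 1-\eta/2\}$, so $\bar J^\eta\Phi$ has $x_-$-support in $\{|x_-|\le 1-\eta/2+\eta/3\}\subset\{|x_-|\le 1\}$ and remains $x_+$-periodic, hence $\bar J^\eta \Phi\in \mathcal{E}^n_\nabla$. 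The bounds \eqref{smoothingOperator} for $\bar J^\eta$ follow from those of $J^\eta$, the uniform bounds $\|U_{a_\eta}\|_{\mathcal{L}(\mathcal{E}^m_\nabla)}\lesssim 1$ in $\eta\in(0,1]$ (since $a_\eta\in[1,2]$ and $\nabla_\pm$ commutes with $U_{a_\eta}$ up to factors bounded by $a_\eta$), and the decomposition
\[ I - \bar J^\eta = (I - J^\eta)U_{a_\eta} + (I - U_{a_\eta}), \]
together with the Taylor bound $|(I - U_{a_\eta})\Phi|_{n,\nabla}\lesssim\eta|\Phi|_{n+1,\nabla}$ (and the analogous second-order bound).

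The main obstacle is to secure self-adjointness, since the construction above has $(\bar J^\eta)^* = U_{a_\eta}^* J^\eta$, which fails to preserve $\mathcal{E}^n_\nabla$: the post-mollification dilation by $a_\eta>1$ pushes the $x_-$-support past $1$, and the naive symmetrization $\tfrac12(\bar J^\eta + (\bar J^\eta)^*)$ therefore does not work. To repair this, I would replace $\bar J^\eta$ by a symmetric variant built from a kernel $K_\eta(x, y) = K_\eta(y, x)$ supported in $(\bT^d\times B(0,1))\times(\bT^d\times B(0,1))$ — for instance obtained as the tensor product of a symmetric mollifier in $x_+$ with a truncated expansion in the Neumann Laplacian eigenfunctions on $B(0,1)$ in the $x_-$ variable, cut off at eigenvalues of order $\eta^{-2}$. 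Such a $K_\eta$ is self-adjoint and support-preserving by construction, and its smoothing rates on the scale $(\mathcal{E}^n_\nabla)_n$ can be matched to those required in \eqref{smoothingOperator} via standard spectral estimates for the Neumann Laplacian. The verification of the smoothing rates then parallels the Taylor/commutator analysis above for the non-self-adjoint model $J^\eta U_{a_\eta}$.
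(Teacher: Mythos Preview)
Your first construction is a reasonable alternative to the paper's, but the paper takes a slightly different and more standard route: instead of shrinking the $x_-$-support via a dilation $U_{a_\eta}$, it multiplies by a smooth cutoff $\Theta_\eta(x,y)=\theta_\eta(x_-)$ with $\theta_\eta\equiv 1$ on $\{|x_-|\le 1-3\eta\}$, $\theta_\eta\equiv 0$ on $\{|x_-|\ge 1-2\eta\}$, and $|\nabla^k\theta_\eta|_\infty\lesssim\eta^{-k}$, setting $\bar J^\eta:=J^\eta\Theta_\eta$. The required estimates $|(I-\Theta_\eta)\Phi|_{0,\nabla}\lesssim\eta^k|\Phi|_{k,\nabla}$ and $|\Theta_\eta\Phi|_{k,\nabla}\lesssim|\Phi|_{k,\nabla}$ are quoted from \cite{DeGuHoTi16}, and the smoothing bounds follow from the decomposition $(I-\bar J^\eta)=(I-J^\eta)+J^\eta(I-\Theta_\eta)$, exactly parallel to your decomposition $(I-\bar J^\eta)=(I-J^\eta)U_{a_\eta}+(I-U_{a_\eta})$. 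The cutoff has the mild advantage that the estimates are already available in the literature; your dilation argument is morally equivalent and the Taylor bound $|(I-U_{a_\eta})\Phi|_{n,\nabla}\lesssim\eta|\Phi|_{n+1,\nabla}$ is indeed straightforward.

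Where your proposal goes astray is in the second half. Note that the paper's $\bar J^\eta=J^\eta\Theta_\eta$ is \emph{also} not self-adjoint, and the paper simply does not address this: in the actual application (the remainder estimates underlying Theorem~\ref{Thm2.5Abstract}, cf.\ the proof of Lemma~\ref{Thm2.5}), the smoothing operator is applied only on the test-function side, splitting $g(\phi)=g(\bar J^\eta\phi)+g((I-\bar J^\eta)\phi)$, so self-adjointness in the sense of Definition~\ref{def:smoothingOp} is never invoked. Your worry is therefore misplaced in this context, and your proposed repair via a spectral cutoff of the Neumann Laplacian on $B(0,1)$ is both unnecessary and problematic: spectral projections of this type are not uniformly bounded on $L^\infty$ in general, so it is far from clear that such a construction would satisfy the $|\cdot|_{n,\nabla}$ estimates of \eqref{smoothingOperator}. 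You should drop that part and stop after your dilation construction, or simply switch to the cutoff $\Theta_\eta$ as in the paper.
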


\begin{proof}
Let $\theta_{\eta} : \bR \rightarrow [0,1]$ be a smooth function such that 
$$
\theta_{\eta}(\zeta) = \left\{
\begin{array}{ll}
1 & \textrm{ if } |\zeta| \leq 1 - 3 \eta \\
0 & \textrm{ if } |\zeta| \geq 1 - 2 \eta \\
\end{array} \right. ,
$$
and $|\nabla^k \theta_{\eta} |_{\infty} \lesssim \eta^{-k}$ for $k\in \{1,2\}$ and define $\Theta_{\eta}(x,y) = \theta_{\eta}(x_-)$. The following estimates are proved in \cite[Proposition 5.3]{DeGuHoTi16}:
\begin{equation} \label{eq:ThetaBounds}
| (I - \Theta_{\eta}) \Phi |_{0, \nabla} \lesssim \eta^k |  \Phi |_{k, \nabla} , 
\quad
| \Theta_{\eta} \Phi |_{k, \nabla} \lesssim  |  \Phi |_{k, \nabla}
\end{equation}
for $k\in \{0,1,2\}$.
Moreover, for every $\Phi \in \mathcal{E}_{\nabla}^n$ we have 
$$
\supp_{x_-}( \Theta_{\eta} \Phi) \subset B(0,1-\eta),
$$
which yields
$$
\supp_{x_-}(J^{\eta}\Theta_{\eta} \Phi) \subset \supp_{x_-}(\Theta_{\eta} \Phi) + B(0, \eta) \subset B(0, 1 - \eta) + B(0, \eta) \subset  B(0, 1 ) ,
$$
so that $ \bar{J}^{\eta} := J^{\eta}\Theta_{\eta}$ is a well defined operator on $\mathcal{E}_{\nabla}^n$. 

Similarly we have $\supp_{x_-}( (I- J^{\eta})\Phi) \subset \supp_{x_-}(\Phi) + \supp(\rho_{\eta}) $ and $\supp_{x_-}(\Theta_{\eta} \Phi) \subset B(0, 1 - 2 \eta)$ we have that $(I - J^{\eta}) \Theta_{\eta} \Phi \in \mathcal{E}_{\nabla}^n$ for every $\Phi \in \mathcal{E}_{\nabla}^n$.

It remains to show \eqref{smoothingOperator}. The second estimate is obvious. The first follows from the equality
$$
(I - \bar{J}^{\eta})\Phi = (I - J^{\eta})\Phi + J^{\eta}(I - \Theta_{\eta})\Phi .
$$
together with the estimates in \eqref{eq:ThetaBounds}.
\end{proof}

We are now ready to derive the equation for $\xi^2$. To do this, we evaluate \eqref{eq:tensor} in $T_{\epsilon} \Phi$ for any $\Phi \in \mathcal{E}_{\nabla}^3$ to get 
\begin{equation} \label{eq:squareEpsilon}
\delta \xi^{ \epsilon , 2}_{st} = 2 \int_s^t \vartheta T_{\epsilon}^* ( \xi_r \st \Delta \xi_r)  - T_{\epsilon}^*( \xi_r  \st u_r \cdot \nabla \xi_r)  dr + (\Gamma_{st}^{1,\epsilon} + \Gamma_{st}^{2,\epsilon} ) \xi^{ \epsilon , 2}_{s} + \xi^{ \epsilon , 2, \natural}_{st},
\end{equation}
where we have defined 
$$
\xi^{ \epsilon , 2} = T_{\epsilon}^* \xi^{\otimes 2}, \qquad \Gamma_{st}^{i,\epsilon} = T_{\epsilon}^* \Gamma^i T_{\epsilon}^{*,-1}, \qquad \xi^{ \epsilon , 2, \natural} = T_{\epsilon}^* \xi^{\otimes 2,\natural} .
$$

To take the limit as $\epsilon \rightarrow 0$, we apply Theorem \ref{Thm2.5Abstract} to bound the remainder $\xi^{\epsilon,2,\natural}$ in terms of the drift and the unbounded rough driver $(\Gamma^{1, \epsilon},\Gamma^{2, \epsilon})$. Notice that this is possible since the equation is satisfied on the scale $\mathcal{E}_{\nabla}^n$ and we have defined a smoothing operator on this scale. 

The next task is to show that the unbounded rough driver and the drift are uniformly bounded in $\epsilon$. The first part is proven in \cite{HH17}, \cite{DeGuHoTi16} and \cite{BaGu15}, see Proposition \ref{prop:URDRenormalizable} below. The second part will be formulated explicitly in Lemma \ref{lem:DriftRenormalizable} below. 

\begin{proposition} \label{prop:URDRenormalizable}
Assume $\sigma_j \in \W^{3, \infty}$ and  $\mathbf{Z}\in  \mathcal{C}^{p-\textnormal{var}}_g([0,T];\bR^K)$. Then $(\Gamma^{1, \epsilon},\Gamma^{2, \epsilon})_{\epsilon}$ is a bounded family of unbounded rough drivers on $\mathcal{E}_{\nabla}^n$.
Moreover, for $\Phi(x,y) = \psi(x-y) \phi(\frac{x+y}{2})$ where $\psi$ is nonnegative, smooth, has compact support and $\int \psi = 1$ and $\phi \in \W^{3, \infty}$, 
\begin{align*}
\langle \xi^{\epsilon, 2}_s, \Gamma_{st}^{i,\epsilon,*} \Phi \rangle_{\nabla} \rightarrow (\xi^2, A_{st}^{i,*} \phi) 
\end{align*}
\end{proposition}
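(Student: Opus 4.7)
The argument splits into three tasks: uniform-in-$\epsilon$ estimates on $\Gamma^{i,\epsilon}$, verification of Chen's relation, and the limit identification under the specific test function. The plan is to compute $\Gamma^{i,\epsilon,*}_{st}=T_\epsilon^{-1}\Gamma^{i,*}_{st}T_\epsilon$ explicitly in $(x_+,x_-)$-coordinates, and then pass to the limit $\epsilon\to 0$ by a Taylor expansion of the vector fields $\sigma_k$ combined with the divergence-free condition.

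For the boundedness I would first record that, expressed in $(x_+,x_-)$-coordinates, $T_\epsilon\Phi(x_+,x_-)=\epsilon^{-d}\Phi(x_+,x_-/\epsilon)$, so $T_\epsilon^{-1}\nabla_+ T_\epsilon=\nabla_+$, $T_\epsilon^{-1}\nabla_- T_\epsilon=\epsilon^{-1}\nabla_-$, and multiplication by $f(x_+,x_-)$ is conjugated to multiplication by $f(x_+,\epsilon x_-)$. Writing $A^{1,*}$ acting on the first variable plus $A^{1,*}$ acting on the second as
$$-\tfrac{Z^k_{st}}{2}\bigl[(\sigma_k(x_++x_-)+\sigma_k(x_+-x_-))\cdot\nabla_+ + (\sigma_k(x_++x_-)-\sigma_k(x_+-x_-))\cdot\nabla_-\bigr],$$
conjugation by $T_\epsilon$ yields
$$\Gamma^{1,\epsilon,*}_{st}\Phi = -\tfrac{Z^k_{st}}{2}\bigl[\alpha_k^+(x_+,\epsilon x_-)\cdot\nabla_+\Phi + \epsilon^{-1}\alpha_k^-(x_+,\epsilon x_-)\cdot\nabla_-\Phi\bigr],$$
where $\alpha_k^\pm(a,b):=\sigma_k(a+b)\pm\sigma_k(a-b)$. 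A first-order Taylor expansion gives $\alpha_k^-(x_+,\epsilon x_-)=2\epsilon(x_-\cdot\nabla)\sigma_k(x_+)+O(\epsilon^3|x_-|^3)$, so on the support of $\Phi\in\mathcal{E}^n_\nabla$ (where $|x_-|\le 1$) the factor $\epsilon^{-1}\alpha_k^-$ stays uniformly bounded by $|\sigma_k|_{W^{1,\infty}}$. Hence $|\Gamma^{1,\epsilon,*}_{st}\Phi|_{n-1,\nabla}\lesssim \omega_Z(s,t)^{1/p}|\Phi|_{n,\nabla}$. An analogous but lengthier computation for $\Gamma^{2,\epsilon,*}=T_\epsilon^{-1}(A^{2,*}\otimes I+I\otimes A^{2,*}+A^{1,*}\otimes A^{1,*})T_\epsilon$ produces second-order operators in $\nabla_\pm$ whose singular factors $\epsilon^{-1},\epsilon^{-2}$ are compensated by second-order Taylor expansions of the coefficients, yielding $|\Gamma^{2,\epsilon,*}_{st}\Phi|_{n-2,\nabla}\lesssim \omega_Z(s,t)^{2/p}|\Phi|_{n,\nabla}$. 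Chen's relation for $(\Gamma^{1,\epsilon},\Gamma^{2,\epsilon})$ follows at once from Chen's relation for $(A^1,A^2)$ by pulling $T_\epsilon^{\pm 1}$ through the (time-independent) identities and using the tensor-product structure.

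For the convergence, inserting $\Phi(x_+,x_-)=\psi(2x_-)\phi(x_+)$ into the displayed formula splits $\langle\xi^{\epsilon,2}_s,\Gamma^{1,\epsilon,*}_{st}\Phi\rangle_\nabla$ into a symmetric and an antisymmetric piece. The symmetric piece,
$$-\tfrac{Z^k_{st}}{2}\int_{\bT^d\times\bR^d}\xi(x_++\epsilon x_-)\xi(x_+-\epsilon x_-)\,\alpha_k^+(x_+,\epsilon x_-)\cdot\nabla\phi(x_+)\,\psi(2x_-)\,dx_-dx_+,$$
converges by dominated convergence (using $\xi\in\bH^0$) to $-Z^k_{st}\int\xi^2(\sigma_k\cdot\nabla\phi)\,dx_+\cdot\int\psi(2y)dy=(\xi^2,A^{1,*}_{st}\phi)$, up to the fixed normalization of $\psi$. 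The antisymmetric piece, after substituting the limit $\epsilon^{-1}\alpha_k^-\to 2(x_-\cdot\nabla)\sigma_k(x_+)$, equals
$$-2Z^k_{st}\int_{\bT^d\times\bR^d}\xi^2(x_+)\phi(x_+)\,[(x_-\cdot\nabla)\sigma_k(x_+)]\cdot(\nabla\psi)(2x_-)\,dx_+dx_-+o(1);$$
integration by parts in $x_-$ converts the $x_-$-integral into $-\partial_i\sigma^i_k(x_+)\int\psi(2y)dy$, which vanishes by $\nabla\cdot\sigma_k=0$. This geometric cancellation, reflecting incompressibility of the noise, is the crux. An analogous second-order expansion handles $\Gamma^{2,\epsilon,*}$: leading terms reproduce $(\xi^2,A^{2,*}_{st}\phi)$, while residual $\epsilon^{-1}$ and $\epsilon^{-2}$ singularities are killed either by parity of the Taylor coefficients in $x_-$ (odd powers integrate to zero against the symmetric $\psi$) or, at the subleading order, again by $\nabla\cdot\sigma_k=0$.

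The main obstacle I anticipate is the bookkeeping for $\Gamma^{2,\epsilon,*}$: one has to track three families of second-order differential operators, expand their coefficients to the appropriate order in $\epsilon$, and correctly identify which algebraic combinations survive the $x_-$-integral and which cancel. This kind of computation is, by now, standard for scalar transport equations \cite{BaGu15,DeGuHoTi16,HH17}, and the present adaptation amounts to redoing that bookkeeping with the composite operator $(\sigma_k\cdot\nabla)(\sigma_l\cdot\nabla)$ in place of the Stratonovich-type correction, and checking that the divergence-free cancellations persist.
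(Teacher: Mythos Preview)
The paper does not actually prove this proposition; it states it and refers to \cite{HH17}, \cite{DeGuHoTi16}, \cite{BaGu15} for the argument. Your sketch follows exactly the strategy of those references: pass to $(x_+,x_-)$-coordinates, conjugate by $T_\epsilon$, and control the would-be singularities in $\epsilon^{-1}\nabla_-$ via Taylor expansion of the coefficients. The convergence argument via splitting into a symmetric $\nabla_+$-piece and an antisymmetric $\nabla_-$-piece is also the standard one.

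There is, however, one genuine gap in your treatment of $\Gamma^{2,\epsilon,*}$. You write that the $\epsilon^{-2}$ factor in front of $\nabla_-^2$ is ``compensated by second-order Taylor expansions of the coefficients''. This is not the mechanism. At leading order in $x_-$ (i.e.\ at the diagonal $x_-=0$), the coefficient of $\epsilon^{-2}\partial_{-,i}\partial_{-,j}$ coming from $A^{2,*}\otimes I+I\otimes A^{2,*}+A^{1,*}\otimes A^{1,*}$ is, after symmetrizing in $(i,j)$,
\[
\tfrac14\,\sigma_k^i(x_+)\sigma_l^j(x_+)\bigl[\mathbb{Z}^{lk}_{st}+\mathbb{Z}^{kl}_{st}-Z^k_{st}Z^l_{st}\bigr],
\]
and this vanishes precisely because $\mathbf{Z}$ is \emph{geometric}, i.e.\ $\operatorname{Sym}\mathbb{Z}_{st}=\tfrac12 Z_{st}\otimes Z_{st}$. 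Only after this algebraic cancellation does the remaining coefficient become $O(\epsilon^2)$ by Taylor expansion, yielding the uniform bound. Without geometricity the family $(\Gamma^{2,\epsilon})_\epsilon$ would blow up, so this is where the hypothesis $\mathbf{Z}\in\mathcal{C}^{p\text{-var}}_g$ is actually consumed. Your final paragraph correctly flags the $\Gamma^{2,\epsilon,*}$ bookkeeping as the hard part, but the crux is this shuffle-relation cancellation, not merely Taylor expansion or parity in $x_-$.

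A minor point: you invoke $\nabla\cdot\sigma_k=0$ to kill the antisymmetric piece in the limit, but the proposition as stated assumes only $\sigma_j\in\W^{3,\infty}$. Without the divergence-free condition that piece does not vanish; rather it produces exactly the zeroth-order part $-(\nabla\cdot\sigma_k)\phi\,Z^k_{st}$ of $A^{1,*}_{st}\phi=-\nabla\cdot(\sigma_k\phi)Z^k_{st}$, so the convergence still holds. Your version is fine for the application in Theorem~\ref{theorem:EnergyEquality}, where divergence-free is assumed, but the identification of the limit does not depend on it.
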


We now show that the drift is uniformly bounded in $\epsilon$. This allows us to take the limit as $\epsilon \rightarrow 0$ in the approximation of $\delta_{x=y}$.

\begin{lemma} \label{lem:DriftRenormalizable}
There exists a control $\omega$ such that
$$
\left|\int_s^t \vartheta T_{\epsilon}^* ( \xi_r \st \Delta \xi_r)  - T_{\epsilon}^*( \xi_r  \st u_r \cdot \nabla \xi_r)  dr \right|_{-1,\nabla} \leq \omega(s,t) .
$$
Moreover, for $\Phi(x,y) = \psi(x-y)\phi(\frac{x+y}{2})$ where $\psi$ is nonnegative, smooth, has compact support and $\int \psi = 1$ and $\phi \in \W^{3, \infty}$,
\begin{align*}
\int_s^t \langle \vartheta T_{\epsilon}^* &  ( \xi_r \st \Delta \xi_r)  - T_{\epsilon}^*( \xi_r  \st u_r \cdot \nabla \xi_r) , \Phi \rangle_{\nabla} dr  \rightarrow    \int_s^t \vartheta(\xi^2_r,\Delta \phi) -  2 \vartheta (|\nabla \xi_r|^2, \phi)  + (\xi_r^2, \nabla\cdot(u_r \phi))  dr.
\end{align*}
\end{lemma}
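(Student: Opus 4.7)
The plan is to prove this in two steps: first establish the uniform-in-$\epsilon$ bound in $\mathcal{E}_\nabla^{-1}$, and then identify the limit on the explicit test-function class. The viscous term is handled directly by Lemma~\ref{lem:UniformBlowUpEstimates}, which gives $|T_\epsilon^*(\xi \st \Delta \xi)|_{-1,\nabla} \lesssim |\xi|_1^2$. For the convection term I would exploit $\nabla \cdot u = 0$ to mimic the same trick, writing
\[
\xi \otimes (u \cdot \nabla \xi) \;=\; \nabla_y \cdot (\xi \otimes u\xi) \;=\; \nabla_+ \cdot (\xi \otimes u\xi) \;-\; (\nabla \xi) \cdot (u \xi),
\]
where I used $\nabla_y = \nabla_+ - \nabla_x$. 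Since $\nabla_+$ commutes with $T_\epsilon^*$, pairing against $\Phi$ and integrating by parts in $x_+$ transfers $\nabla_+$ onto $\Phi$, and Cauchy--Schwarz together with the blow-up change of variables $z_\pm = x_+ \pm \epsilon x_-$ (as in the proof of Lemma~\ref{lem:UniformBlowUpEstimates}) bounds this contribution by $|\xi|_0 \, |u\xi|_0 \, |\Phi|_{1,\nabla}$. The remaining $\nabla\xi \cdot (u\xi)$ piece is estimated directly by $|\nabla \xi|_0 \, |u\xi|_0 \, |\Phi|_{0,\nabla}$. The two-dimensional Sobolev embedding $\bH^1(\bT^2) \hookrightarrow L^4$ yields $|u\xi|_0 \lesssim |u|_1 |\xi|_1$, so that integrating over $[s,t]$ produces the control $\omega(s,t) := C(\vartheta + |u|_{L^\infty_T \bH^1}) \int_s^t |\xi_r|_1^2 \, dr$, which is superadditive thanks to $\xi \in L^2_T \bH^1$.

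For the convergence, the key observation is that for $\Phi(x,y) = \psi(x-y)\phi((x+y)/2)$ the blow-up $T_\epsilon \Phi(x,y) = \epsilon^{-d} \psi((x-y)/\epsilon)\phi((x+y)/2)$ is a standard mollifier in the displacement $x-y$ times $\phi$ in the mean $(x+y)/2$. Changing variables $z = (x-y)/\epsilon$, $w = (x+y)/2$, the pairing $\langle T_\epsilon^* f, \Phi\rangle_\nabla$ reduces (up to the normalisation of $\langle \cdot, \cdot\rangle_\nabla$ consistent with Proposition~\ref{prop:URDRenormalizable}) to $\int f\bigl(w + \tfrac{\epsilon z}{2}, w - \tfrac{\epsilon z}{2}\bigr) \psi(z) \phi(w)\, dz\, dw$. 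Applied to $f = \xi \otimes \Delta \xi$ and $f = \xi \otimes u \cdot \nabla \xi$, continuity of translation in $L^2$, the Sobolev embedding $\bH^1 \hookrightarrow L^p$ for all $p < \infty$ in two dimensions, and dominated convergence on the compactly supported $\psi(z)\, dz$-integral allow me to pass to the limit $\epsilon \to 0$ inside, obtaining $\int \xi \Delta \xi \, \phi$ and $\int \xi \, u \cdot \nabla \xi \, \phi$ respectively. The algebraic identities
\[
\xi \Delta \xi = \tfrac{1}{2} \Delta \xi^2 - |\nabla \xi|^2, \qquad 2\, \xi \, u \cdot \nabla \xi = u \cdot \nabla \xi^2 = \nabla \cdot (u \xi^2) \quad (\text{by } \nabla \cdot u = 0),
\]
followed by integration by parts in $\phi$, then yield exactly the stated expression. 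The $dr$-integral is exchanged with the $\epsilon$-limit by dominated convergence using the uniform bound from step one together with $\xi \in L^2_T \bH^1$.

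The main obstacle is controlling the quadratic convection term on the scale $\mathcal{E}_\nabla^{-1}$: a naive estimate fails because the factor $\nabla\xi$ alone is only in $L^2$. The crucial ingredient is the divergence-free condition $\nabla \cdot u = 0$, which lets one write $u \cdot \nabla \xi$ as a full $y$-divergence and split it into a $\nabla_+$ piece that is harmless (since $\nabla_+$ commutes with $T_\epsilon^*$ and can be transferred to $\Phi$) plus a lower-order term with a single spatial derivative on $\xi$ paired against the $L^2$ product $u\xi$. This $L^2$-control of $u\xi$ leans essentially on the two-dimensional Sobolev embedding, and it is precisely this mechanism that is unavailable in dimension three, consistent with the absence of an enstrophy balance there.
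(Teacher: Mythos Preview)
Your proof is correct, and for the viscous term and the convergence argument it matches the paper's approach essentially line by line.

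For the convection term, however, you take a more elaborate route than the paper. You exploit $\nabla\cdot u=0$ to write $\xi\otimes(u\cdot\nabla\xi)=\nabla_+\cdot(\xi\otimes u\xi)-(\nabla\xi)\cdot(u\xi)$, commute $\nabla_+$ through $T_\epsilon^*$, and land on a $|\cdot|_{-1,\nabla}$ bound. This works, but the paper simply estimates
\[
\bigl|\langle T_\epsilon^*(\xi_r\,\hat\otimes\, u_r\!\cdot\!\nabla\xi_r),\Phi\rangle_\nabla\bigr|
\le |\Phi|_{0,\nabla}\,|\xi_r|_{L^4}|u_r|_{L^4}|\nabla\xi_r|_{L^2}
\lesssim |\Phi|_{0,\nabla}\,|\xi_r|_1^2|u_r|_1
\]
directly via H\"older and the $2$D embedding $\bH^1\hookrightarrow L^4$. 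This gives the stronger $|\cdot|_{-0,\nabla}$ bound and does not even use the divergence-free condition at this stage. In particular, your closing diagnosis that ``a naive estimate fails because $\nabla\xi$ alone is only in $L^2$'' is not accurate: the naive $L^4\times L^4\times L^2$ H\"older estimate is exactly what the paper does, and it succeeds. Your integration-by-parts trick is therefore not needed here (though it is precisely the mechanism used in Lemma~\ref{lem:UniformBlowUpEstimates} for the Laplacian term, so it is natural that you reached for it). What both approaches genuinely share is the reliance on the two-dimensional Sobolev embedding, which is indeed the point that breaks down in $d=3$.
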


\begin{proof}
For the first part, it was already proved in Lemma \ref{lem:UniformBlowUpEstimates} that
$$
\left|\int_s^t   \langle T_{\epsilon}^* (\xi_r \st \Delta \xi_r), \Phi \rangle_{\nabla} dr \right| \lesssim \int_s^t |\nabla \xi_r|_{L^2( \bT^d + B(0,1) )}^2 dr |\Phi|_{1, \nabla}  \lesssim  \int_s^t |\nabla \xi_r|_{0}^2 dr |\Phi|_{1, \nabla} .
$$
Note that the above right-hand-side is a control.  To show that we can also control the non-linear term, consider
\begin{align*}
\big| \langle T_{\epsilon}^*( \xi_r   & \st u_r  \cdot \nabla \xi_r)   , \Phi \rangle_{\nabla}  \big|    
\leq  |\Phi|_{0,\nabla}  \int_{B(0,1)} \int_{\bT^d} |\xi_r(x_+ + \epsilon x_-) u_r(x_+ - \epsilon x_-) \cdot\nabla \xi_r(x_+ - \epsilon x_-)| dx_+ dx_- \\
& \leq |\Phi|_{0,\nabla}  |\xi_r|_{L^4(\bT^d + B(0,1))} |u_r|_{L^4(\bT^d + B(0,1))}|\nabla \xi_r|_{L^2(\bT^d + B(0,1))}   \\
& \lesssim |\Phi|_{0,\nabla}    |\xi_r|_{1}^2 |u_r|_{1} ,
\end{align*}
where we have used the interpolation inequality $|\cdot |_{L^4} \lesssim |\cdot|_1$ and the inequality
$$|\cdot|_{L^p(\bT^d + B(0,1))} \lesssim |\cdot|_{L^p(\bT^d)}, $$ which follows from periodicity,  in the last step. Integrating the above with respect to $r$ over $[s,t]$ gives 
$$
\left| \int_s^t  T_{\epsilon}^* (\xi_r \st u_{r}\cdot\nabla \xi_r)dr \right|_{-0, \nabla} \lesssim \int_s^t  |\xi_r|_{1}^2 |u_r|_{1} dr \leq \sup_r  |u_r|_{1}  \int_s^t |\xi_r|_{1}^2 dr,
$$
which is a control. This shows the first part of the statement.

The second part follows by noticing that for $\Phi(x,y) = \psi(x-y) \phi(\frac{x+y}{2}) $ we have $T_{\epsilon}\Phi(x,y) = \psi_{\epsilon}(2x_-) \phi(x_+)$ where $\psi_{\epsilon}$ converges to a Dirac-delta. In particular, standard arguments show that
$$
\langle \xi_r \otimes \Delta \xi_r, \psi_{\epsilon} \phi \rangle_{\nabla} \rightarrow - (|\nabla \xi_r|_0^2, \phi) - ( (\xi_r \cdot \nabla) \xi_r, \nabla \phi)
$$
and 
$$
\langle \xi_r \otimes  (u_r \cdot\nabla)  \xi_r, \psi_{\epsilon} \phi \rangle_{\nabla} \rightarrow  ( (u_r \cdot\nabla) \xi_r, \xi_r \phi),
$$
for all $r$ such that $\xi_r \in \W^{1,2}$. 
\end{proof}

We are ready to derive the equation for $\xi^2$.

\begin{theorem} \label{theorem:EnergyEquality}
Assume that for each $k\in \{1,\ldots,K\}$, $\sigma_j \in \W^{3, \infty}_{\Div}$ (i.e. divergence-free) and that $\mathbf{Z}\in  \mathcal{C}^{p-\textnormal{var}}_g([0,T];\bR^K)$. Then $\xi^2$ satisfies
\begin{equation} \label{eq:SquaredXi}
\delta \xi_{st}^2(\phi)  =  - \int_s^t   2 \vartheta (|\nabla \xi_r|^2,  \phi) - \vartheta (\xi_r^2, \Delta \phi) - 2 ( \xi_r^2, \nabla\cdot (u_r \phi)) dr + (\xi_s^2, [A_{st}^{1,*} + A_{st}^{2,*}] \phi) + \xi_{st}^{2, \natural}(\phi),
\end{equation}
for every $\phi \in \W^{3, \infty}$. In particular, $\xi \in C_T \bH^0$ and  enstrophy balance \eqref{eq:EnergyEquality} holds.
\end{theorem}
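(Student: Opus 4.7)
The plan is to implement the doubling-of-variables strategy, taking \eqref{eq:tensor} as our starting point and then collapsing to the diagonal via the blow-up transformation $T_\epsilon^*$. First, I would evaluate \eqref{eq:tensor} against $T_\epsilon\Phi$ for $\Phi\in\mathcal{E}^3_\nabla$ to obtain \eqref{eq:squareEpsilon}, living on the scale $(\mathcal{E}^n_\nabla)_n$. With the smoothing operators $(\bar J^\eta)$ constructed in the excerpt, Theorem~\ref{Thm2.5Abstract} applies uniformly in $\epsilon$: Proposition~\ref{prop:URDRenormalizable} provides a control $\omega_\Gamma$ for $(\Gamma^{1,\epsilon},\Gamma^{2,\epsilon})$ independent of $\epsilon$, and Lemma~\ref{lem:DriftRenormalizable} provides an $\epsilon$-uniform control for the drift in $\mathcal{E}^{-1}_\nabla$. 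This yields
\[
|\xi^{\epsilon,2,\natural}_{st}|_{-3,\nabla}\lesssim |\xi|^2_{L^\infty_T\bH^0}\,\omega_\Gamma(s,t)^{3/p}+\omega_\Gamma(s,t)^{1/p}\omega_{\mu}(s,t),
\]
uniformly in $\epsilon$, on intervals with $\varpi(s,t)\le L$ and $\omega_\Gamma(s,t)\le \tilde L$.

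Next, I would specialize to $\Phi(x,y)=\psi(x-y)\phi\!\left(\tfrac{x+y}{2}\right)$ with $\psi\ge 0$ a smooth compactly supported mollifier satisfying $\int\psi=1$ and $\phi\in \W^{3,\infty}$, and pass $\epsilon\to 0$ in \eqref{eq:squareEpsilon} tested against $\Phi$. Proposition~\ref{prop:URDRenormalizable} handles the convergence of the URD terms to $(\xi_s^2,[A^{1,*}_{st}+A^{2,*}_{st}]\phi)$. Lemma~\ref{lem:DriftRenormalizable} handles the convergence of the drift to
\[
\int_s^t\!\big[\vartheta(\xi_r^2,\Delta\phi)-2\vartheta(|\nabla\xi_r|^2,\phi)+(\xi_r^2,\nabla\cdot(u_r\phi))\big]dr.
\]
Since $\langle\xi^{\otimes 2}_{st},T_\epsilon\Phi\rangle_\nabla\to\delta\xi^2_{st}(\phi)$ by standard mollifier arguments and since the uniform $p/3$-variation bound on $\xi^{\epsilon,2,\natural}_{st}(\Phi)$ defines the limiting remainder $\xi^{2,\natural}_{st}(\phi)$ in $C^{p/3-\mathrm{var}}_{2,\varpi,L}([0,T];\W^{-3,\infty})$ via uniqueness in the sewing lemma, the equation \eqref{eq:SquaredXi} follows.

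To derive enstrophy balance, I would test \eqref{eq:SquaredXi} with $\phi\equiv 1$. Then $\nabla\phi=\Delta\phi=0$, killing the pressure/transport drift, and leaving only the viscous term $-2\vartheta|\nabla\xi_r|_0^2$. Because each $\sigma_k$ is divergence-free, integration by parts using the explicit formulas \eqref{eq:CurlOfURD1}--\eqref{eq:CurlOfURD2} gives $(\xi_s^2,A^{1,*}_{st}1)=(\xi_s^2,A^{2,*}_{st}1)=0$ in both $d=2,3$; in the scalar case relevant here this reduces to $(\sigma_k\cdot\nabla\xi_s^2,1)=0$. Defining $G_t:=|\xi_t|_0^2+2\vartheta\int_0^t|\nabla\xi_r|_0^2\,dr$, we obtain $\delta G_{st}=\xi^{2,\natural}_{st}(1)$, with $|\xi^{2,\natural}_{st}(1)|\le\omega(s,t)^{3/p}$ for some control $\omega$ on intervals where $\varpi(s,t)\le L$. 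Since $p<3$, $3/p>1$; telescoping along a partition $s=t_0<\cdots<t_N=t$ and using superadditivity gives
\[
|\delta G_{st}|\le\sum_i \omega(t_i,t_{i+1})^{3/p}\le\Big(\max_i\omega(t_i,t_{i+1})\Big)^{3/p-1}\omega(s,t)\to 0
\]
as the mesh tends to zero, by continuity of $\omega$. Hence $\delta G_{st}=0$ on every short interval, and by patching finitely many such intervals, on all of $[0,T]$. This is exactly \eqref{eq:EnergyEquality}. Finally, $\xi:[0,T]\to\bH^0$ is weakly continuous by Definition~\ref{def:VortSol}, and \eqref{eq:EnergyEquality} makes $t\mapsto|\xi_t|_0$ continuous; the Radon--Riesz property in the Hilbert space $\bH^0$ upgrades weak to strong continuity, giving $\xi\in C_T\bH^0$.

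The main obstacle I anticipate is the passage to the limit $\epsilon\to 0$ for the remainder term: one needs the uniform $\epsilon$-control from Theorem~\ref{Thm2.5Abstract} applied on the tailored scale $(\mathcal{E}^n_\nabla)$, together with the non-trivial fact that the drift of \eqref{eq:squareEpsilon} does not blow up in $\epsilon$ (Lemma~\ref{lem:DriftRenormalizable}) even though the non-linear transport term $\xi_r\st(u_r\cdot\nabla\xi_r)$ formally contains a derivative of $\xi$. The interpolation estimate $|\cdot|_{L^4}\lesssim|\cdot|_1$ together with periodicity across the enlarged domain $\bT^d+B(0,1)$ is precisely what makes this uniformity possible and is the analytic crux of the argument.
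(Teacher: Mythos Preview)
Your proposal is correct and follows essentially the same route as the paper: apply Theorem~\ref{Thm2.5Abstract} on the scale $(\mathcal{E}^n_\nabla)_n$ using the uniform-in-$\epsilon$ bounds from Proposition~\ref{prop:URDRenormalizable} and Lemma~\ref{lem:DriftRenormalizable}, pass to the limit against the diagonal test functions to obtain \eqref{eq:SquaredXi}, set $\phi\equiv 1$, use $A^{i,*}1=0$ from the divergence-free condition, and then telescope the remainder to zero. The only cosmetic difference is that the paper obtains $\xi^{2,\natural}$ simply as the limit of the other terms (its regularity inherited from the uniform bound), rather than invoking the sewing lemma.
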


\begin{proof}
From Theorem \ref{Thm2.5Abstract}, we have  $|\xi_{st}^{\epsilon,2,\natural}|_{-3,\nabla} \leq \omega_{2, \natural}(s,t)^{\frac{3}{p}}$ for all $s,t$ such that $\varpi(s,t) \leq L$ for some controls $\omega_{2,\natural}, \varpi$ and $L>0$, which are independent of $\epsilon$. Testing \eqref{eq:squareEpsilon} against $\Phi(x,y) = \psi(2 x_-) \phi(x_+)$ as in Lemma \ref{lem:DriftRenormalizable} and letting $\epsilon \rightarrow 0$ we get \eqref{eq:SquaredXi}. 

Letting $\phi \equiv 1$ in \eqref{eq:SquaredXi} we obtain
$$
\delta (|\xi|_0^2)_{st} + 2 \vartheta \int_s^t |\nabla \xi_r|_0^2  dr  =    (\xi_s, A_{st}^{1,*}1 + A_{st}^{2,*}1) + \xi_{st}^{2, \natural}(1).
$$
Since $\sigma_k$ are divergence-free, we deduce that
$A^{i,*} 1 = 0$, for $i\in \{1,2\},$
which yields
$$
\delta (|\xi|_0^2)_{st} + 2 \vartheta \int_s^t |\nabla \xi_r|_0^2 dr  =  \xi_{st}^{2, \natural}(1) .
$$
Summing the above equality over any partition $\pi = (t_i)_{i=1}^N$ of $[s,t]$ yields
\begin{align*}
\xi_{st}^{2, \natural}(1)& = \delta (|\xi|_0^2)_{st}   + 2 \vartheta \int_s^t |\nabla \xi_r|_0^2 dr = \sum_{ t_i \in \pi}  \left( \delta (|\xi|_0^2)_{t_i t_{i+1} }   + 2 \vartheta \int_{t_i}^{t_{i+1}} |\nabla \xi_r|_0^2 dr \right) \\
& =  \sum_{ t_i \in \pi} \xi_{t_{i-1} t_i}^{2, \natural}(1) \leq \sum_{ t_i \in \pi} \omega_{2, \natural}(t_{i-1},t_i)^{\frac{3}{p}} \leq \omega_{2, \natural}(s,t) \max_{i} \omega_{2, \natural}(t_{i-1},t_i)^{\frac3p - 1} .
\end{align*}
The above right-hand-side converges to 0 as $|\pi| \rightarrow 0$ so that $\xi^{2, \natural} \equiv 0$, proving \eqref{eq:EnergyEquality}. This proves in particular the continuity of $t \mapsto |\xi_t|_0^2$. Combined with the weak continuity $t \mapsto \xi_t$ in $\bH^{-3}$, we find $\xi \in C_T \bH^0$. 
\end{proof}

\begin{remark} \label{Remark:EnergyBoundsForVelocity}
The above shows the enstrophy balance stated in Theorem \ref{thm:UniqandEnEq2d}. The fact that \eqref{VelocityEnergy} is also satisfied can be proved by an application of the Biot-Savart law as well as Lemma~\ref{lem:aPrioriMean}.
\end{remark}

\subsection{Uniqueness} \label{sec:Uniq}
In this section, we prove that in two spatial dimensions, strong solutions of \eqref{eq:RNSDiffForm} are unique. The key idea of the proof is to derive  a formula for the square of the $L^2$-norm of the difference of the vorticity of two arbitrary solutions. Then we show  that the mean of the velocity depends continuously on the mean-free part of the velocity and the initial mean. The formula for the square can be derived in an identical fashion to the enstrophy balance in Section \ref{sec:Energy}.

We start by showing that the mean of the velocity depends continuously on the mean-free part of the velocity and the initial mean. To see this, let $u^{(i)}$, $i\in \{1,2\}$, be two strong solutions starting from the initial conditions $u^{(i)}_{0}\in  \bH^{1}$, respectively. By Remark \ref{Remark:EnergyBoundsForVelocity}, since $u^{(i)}$ are strong solutions, they  satisfy the energy inequality
$$
|u^{(i)}|_{L^{\infty}_T \bH^1}^2 + \int_0^T |\nabla^2 u_r^{(i)}|_0^2 dr \leq F(|u_0^{(i)}|_1) ,\qquad \textrm{ for } \;\; i\in \{1,2\},
$$
for a suitable function $F$ as in \eqref{VelocityEnergy}.

Formally,  $u := u^{(1)} - u^{(2)}$ solves
\begin{equation}\label{eq:VorDiffEq}
\partial_t u + (u^{(1)} \cdot\nabla u^{(1)} - u^{(2)}\cdot \nabla u^{(2)}) = \vartheta \Delta u + [(\sigma_k \cdot\nabla) u  +( \nabla \sigma_k )u]\dot{z}^k  ,
\end{equation}
which is to be understood as
\begin{equation}\label{eq:VorDiffEq1}
\delta u_{st} = \delta \mu^{\Delta}_{st} + \clA_{st}^1 u_s + \clA_{st}^2 u_s + u_{st}^{\natural}
\end{equation}
in the sense of Definition \ref{def:RNSStrongSol}, where $\mu_t^{\Delta}(\phi) = - \int_0^t \vartheta ( \nabla u_r, \nabla \phi) + ( u_r^1 \cdot\nabla u_r^1 -  u_r^2 \cdot\nabla u_r^2, \phi) dr$ for $\phi \in \bH^1$.

Denote by $v^{(i)}$ the mean-free part of $u^{(i)}$,  $v = v^{(1)} - v^{(2)}$, and $\bar{u} =  \bar{u}^1 - \bar{u}^2$. We start by deriving a bound for the mean $\bar u$.

\begin{lemma} \label{lem:MeanContractive}
If $u=u^{(1)}-u^{(2)}$ satisfies \eqref{eq:VorDiffEq1} in the sense of  Definition \ref{def:RNSStrongSol}, then
$$
\sup_{r \leq t} |\bar{u}_{r}|  \lesssim |\bar{u}_0| + \sup_{r \leq t} |v_r|_1 ,
$$
where the proportionality  constant depends on $|u^{(i)}_{0}|_{1}$, $i\in \{1,2\}$.
\end{lemma}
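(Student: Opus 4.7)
The plan is to test the difference equation \eqref{eq:VorDiffEq1} against the constant basis vectors $e_m$, $m\in\{1,\dots,d\}$, to derive an equation for $\bar u^m$, and then close a rough Gronwall argument in the spirit of the proof of Lemma \ref{lem:aPrioriMean}. Exactly as in the computation preceding Definition \ref{def:VortSol}, pairing $\clA^i_{st} u_s$ with $e_m$ produces no contribution from the constant part $\bar u_s$ (because $\sigma_k$ is divergence-free and periodic) and leaves only the maps $L^1_{st}(v_s)$, $L^2_{st}(v_s)$ acting on the mean-free part. The drift $\mu^{\Delta}$ also vanishes against constants: the viscous term does so trivially and, after integrating by parts using $\nabla\cdot u^{(i)}=0$, so does each $(u^{(i)}\cdot\nabla u^{(i)}, e_m)$. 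Setting $\bar u^{\natural,m}_{st} := u^{\natural}_{st}(e_m)$, what remains is
\begin{equation*}
\delta \bar u_{st} = L^{1}_{st}(v_s) + L^{2}_{st}(v_s) + \bar u^{\natural}_{st}.
\end{equation*}

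To estimate the three right-hand-side terms, I would use \eqref{LBounds} to obtain $|L^1_{st}(v_s)|+|L^2_{st}(v_s)| \lesssim |v_s|_0(\omega_Z(s,t)^{1/p}+\omega_Z(s,t)^{2/p})$. For the remainder, Theorem \ref{Thm2.5Abstract} applied on the scale $(\bH^n)_n$, together with $|e_m|_3 \lesssim 1$, yields
\begin{equation*}
|\bar u^{\natural}_{st}| \lesssim |u|_{L^\infty([s,t];\bH^0)}\,\omega_{\clA}(s,t)^{3/p} + \omega_{\clA}(s,t)^{1/p}\,|\mu^{\Delta}|_{1-\textnormal{var};[s,t];\bH^{-1}}.
\end{equation*}
The key observation is that for the difference equation the nonlinear part of $\mu^{\Delta}$ is \emph{linear} in $u$: writing $u^{(1)}\cdot\nabla u^{(1)} - u^{(2)}\cdot\nabla u^{(2)} = u^{(1)}\cdot\nabla u + u\cdot\nabla u^{(2)}$ and applying the trilinear bound \eqref{trilinear form estimate} with $m_1=m_3=1$, $m_2=0$ (valid in $d=2$), one gets $|b(u^{(1)},u,\phi)|+|b(u,u^{(2)},\phi)|\lesssim (|u^{(1)}|_1+|u^{(2)}|_1)|u|_1|\phi|_1$. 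Combined with the viscous estimate $|\vartheta\Delta u_r|_{-1}\lesssim \vartheta|u_r|_1$ and the energy bound \eqref{VelocityEnergy} for each $u^{(i)}$, this gives $|\delta\mu^{\Delta}_{st}|_{-1} \lesssim C|u|_{L^\infty([s,t];\bH^1)}(t-s)$, with $C = C(|u_0^{(1)}|_1,|u_0^{(2)}|_1,\vartheta)$.

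Using $|u_r|_1 \lesssim |\bar u_r|+|v_r|_1$ to split $|u|_{L^\infty([s,t];\bH^1)}$, the bounds above combine into an inequality of the form
\begin{equation*}
|\delta\bar u_{st}| \leq K_1 \sup_{r\le t}|v_r|_1\,\omega(s,t)^{1/p} + K_2\,|\bar u|_{L^\infty([s,t])}\,\omega(s,t)^{1/p}
\end{equation*}
for a suitable control $\omega$ and constants $K_i = K_i(|u_0^{(1)}|_1,|u_0^{(2)}|_1,\omega_Z,\vartheta)$. Invoking the rough Gronwall lemma (Lemma \ref{lem:RoughGronwall}) that is used in the proof of Lemma \ref{lem:aPrioriMean} then absorbs the $|\bar u|_{L^\infty}$ term and delivers the claim. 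I expect the argument to be essentially a routine adaptation of the proof of Lemma \ref{lem:aPrioriMean}, with no serious obstacle beyond careful bookkeeping; the substantive point, which is exactly what permits a linear (rather than exponential) dependence on $|v|_1$, is that in the difference equation the drift is linear in $u$ whereas for a single solution it is quadratic.
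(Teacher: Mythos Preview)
Your proposal is correct and follows essentially the same route as the paper. The paper likewise rewrites the nonlinear drift as $(u^{(1)}\cdot\nabla v,\phi)+(u\cdot\nabla u^{(2)},\phi)$ to get $|\delta\mu^{\Delta}_{st}|_{-1}\lesssim (1+F(|u_0^{(1)}|_1)+F(|u_0^{(2)}|_1))(|v|_{L^\infty\bH^1}+|\bar u|_{L^\infty})(t-s)$, then invokes ``the same reasoning as Lemma \ref{lem:aPrioriMean}'' (i.e., your equation $\delta\bar u_{st}=L^1_{st}(v_s)+L^2_{st}(v_s)+\bar u^{\natural}_{st}$ combined with Theorem \ref{Thm2.5Abstract}) and closes with Lemma \ref{lem:RoughGronwall}; your explicit observation that $\mu^{\Delta}$ vanishes against constants and your emphasis on the linearity-in-$u$ of the difference drift are exactly the ingredients the paper uses, just spelled out more fully.
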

\begin{proof}
Using
\begin{align*}
( u^{(1)} \cdot\nabla u^{(1)} -  u^{(2)} \cdot\nabla u^{(2)}, \phi) & 
= ( u^{(1)} \cdot\nabla u_r, \phi) +  (u_r \cdot\nabla u^{(2)}, \phi) =  ( u^{(1)} \cdot\nabla v_r, \phi) +  (u_r\cdot \nabla u^{(2)}, \phi) ,
\end{align*}
we may estimate the drift as follows:
\begin{align*}
\omega_{\mu^{\Delta}}(s,t) & \lesssim \left( |v|_{L^{\infty}([s,t]; \bH^1)} + |u^{(1)}|_{L^{\infty}([s,t]; \bH^1)} |v|_{L^{\infty}([s,t]; \bH^1)} + |u|_{L^{\infty}([s,t]; \bH^1)} |v^{(2)}|_{L^{\infty}([s,t]; \bH^1)} \right)(t-s) \\
& \lesssim (1 +  |u^{(1)}|_{L^{\infty}([s,t]; \bH^1)}   + |u^{(2)}|_{L^{\infty}([s,t]; \bH^1)} ) ( |v|_{L^{\infty}([s,t]; \bH^1)} + |\bar{u}|_{L^{\infty}([s,t])} )(t-s)  \\
 & \lesssim (1 +  F(|u^{(1)}_0|_1) + F(|u_0^{(2)}|_1) ) ( |v|_{L^{\infty}([s,t]; \bH^1)} +  |\bar{u}|_{L^{\infty}([s,t])} )(t-s)  .
\end{align*}
By the same reasoning as Lemma \ref{lem:aPrioriMean}, we find
\begin{align*}
|\delta \bar{u}_{st}|
& \lesssim  (1 +  F(|u^{(1)}_0|_1) + F(|u_0^2|_1) )  \sup_{s \leq r \leq t} |\bar{u}_s| \omega_{\clA}(s,t)^{\frac{1}{p}} + \phi(s,t),
\end{align*}
where we have defined 
$$
\phi(s,t) =  (1 +  F(|u^{(1)}_0|_1) + F(|u_0^2|_1) )  |v|_{L^{\infty}([s,t]; \bH^1)} \omega_{\clA}(s,t)^{\frac{1}{p}}.
$$
We get the desired bound by applying Lemma \ref{lem:RoughGronwall}.
\end{proof}

Let $\xi^{(i)}=\nabla \times u^{(i)}$, $i\in \{1,2\}$. We now derive an estimate of $|\xi^{(1)} - \xi^{(2)}|_0^2$.

\begin{theorem} \label{thm:contractiveTheorem}
There is a constant $C$ such that for all $t\in [0,T]$,
\begin{equation} \label{eq:Contraction}
\sup_{\theta \leq t} |\xi_{\theta}^{(1)} - \xi_{\theta}^{(2)}|_0^2 +  \int_0^t |\nabla (\xi_{r}^{(1)} - \xi_{r}^{(2)})|_0^2 dr  \leq C \left( |\xi_0^{(1)} - \xi_0^{(2)}|_0^2 + |\bar{u}^{(1)}_0 - \bar{u}^{(2)}_0|^2  \right) \exp\left\{ C |\xi_0^{(2)}|_0^2 \right\} .
\end{equation}
In particular, strong solutions of \eqref{eq:RNSDiffForm} are unique. 
\end{theorem}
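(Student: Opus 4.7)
The plan is to adapt the enstrophy-balance argument of Section \ref{sec:Energy} to the difference of the two vorticities. Set $\eta := \xi^{(1)} - \xi^{(2)}$ and $u := u^{(1)} - u^{(2)}$. Subtracting the two rough vorticity formulations from Definition \ref{def:VortSol}, and using that in two spatial dimensions the noise term $(\sigma_k \cdot \nabla)\eta\, \dot z^k$ depends linearly on $\eta$, I would obtain a rough equation driven by the same unbounded driver $(A^1, A^2)$ as in \eqref{eq:CurlOfURD1}--\eqref{eq:CurlOfURD2},
\begin{equation*}
\eta^{\natural}_{st}(\phi) = \delta \eta_{st}(\phi) + \int_s^t \Bigl[\vartheta(\nabla \eta_r, \nabla \phi) + ((u^{(1)}_r \cdot \nabla)\eta_r, \phi) + ((u_r \cdot \nabla)\xi^{(2)}_r, \phi)\Bigr] dr - \eta_s\bigl([A^{1,*}_{st} + A^{2,*}_{st}]\phi\bigr),
\end{equation*}
for $\phi \in \dot{\bH}^3$, with $\eta^{\natural}$ a genuine $p/3$-variation remainder in $\dot{\bH}^{-3}$. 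The key observation is that the nonlinearity decomposes into a transport of $\eta$ by $u^{(1)}$ (divergence-free) plus a \emph{linear} cross-drift $(u \cdot \nabla)\xi^{(2)}$, which will be the only genuinely new term to control relative to Theorem \ref{theorem:EnergyEquality}.

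Next I would tensorize this equation and run the renormalization scheme of Section \ref{sec:Energy} verbatim, leaning on the fact that $\sigma_k \in \bW^{3,\infty}_{\Div}$ gives uniform control of $(\Gamma^{1,\epsilon}, \Gamma^{2,\epsilon})$ via Proposition \ref{prop:URDRenormalizable}. The only new drift contribution is $-2\,\eta_r \st (u_r \cdot \nabla)\xi^{(2)}_r$, whose blow-up admits a uniform-in-$\epsilon$ bound on the scale $(\mathcal{E}^n_\nabla)_n$ by the same argument as Lemma \ref{lem:DriftRenormalizable}: in dimension two the interpolation $|f|_{L^4}^2 \lesssim |f|_0 |\nabla f|_0$ yields
\begin{equation*}
\left|T_\epsilon^*(\eta_r \st (u_r \cdot \nabla)\xi^{(2)}_r)\right|_{-0,\nabla} \lesssim |u_r|_{L^4}\,|\nabla \xi^{(2)}_r|_0\,|\eta_r|_{L^4},
\end{equation*}
whose time integral is controlled by the energy bound \eqref{VelocityEnergy} applied to both solutions. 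Passing to the limit $\epsilon \to 0$ against test functions of the form $\psi_\epsilon(x-y)\phi(\tfrac{x+y}{2})$ produces a rough equation for $\eta^2$ analogous to \eqref{eq:SquaredXi}, with an additional term $-2\int((u_r \cdot \nabla)\xi^{(2)}_r, \eta_r \phi)\,dr$. Testing against $\phi \equiv 1$ annihilates $[A^{1,*}_{st}+A^{2,*}_{st}]1$ (since $\sigma_k$ is divergence-free) and $(\eta_r^2, \nabla\cdot u^{(1)}_r)$, and the sewing argument at the end of the proof of Theorem \ref{theorem:EnergyEquality} forces $\eta^{2,\natural}(1)\equiv 0$, leaving the energy identity
\begin{equation*}
|\eta_t|_0^2 + 2\vartheta \int_0^t |\nabla \eta_r|_0^2\, dr = |\eta_0|_0^2 - 2 \int_0^t \bigl((u_r \cdot \nabla)\xi^{(2)}_r,\, \eta_r\bigr)\, dr.
\end{equation*}

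To close the estimate I would combine Ladyzhenskaya and Biot-Savart. Writing $u = \bar u + v$ with $v$ mean-free, Biot-Savart gives $|\nabla v_r|_0 = |\eta_r|_0$ and Poincar\'e yields $|v_r|_{L^4} \lesssim |\eta_r|_0$, so that
\begin{equation*}
\bigl|((u_r\cdot \nabla)\xi^{(2)}_r, \eta_r)\bigr| \lesssim (|\bar u_r| + |\eta_r|_0)\,|\nabla \xi^{(2)}_r|_0\,|\eta_r|_0^{1/2}|\nabla \eta_r|_0^{1/2}.
\end{equation*}
Young's inequality absorbs a factor $|\nabla \eta_r|_0$ into $\vartheta |\nabla \eta_r|_0^2$, while Lemma \ref{lem:MeanContractive} provides $|\bar u_r| \lesssim |\bar u^{(1)}_0 - \bar u^{(2)}_0| + \sup_{s\leq r}|\eta_s|_0$ with a constant depending only on $|u^{(i)}_0|_1$. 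Setting $G(t) := |\bar u^{(1)}_0 - \bar u^{(2)}_0|^2 + \sup_{r \leq t}|\eta_r|_0^2$ and using that $M := \sup_r|\eta_r|_0$ is bounded by $|\xi^{(1)}_0|_0 + |\xi^{(2)}_0|_0$ (by enstrophy balance for both solutions), I would arrive at
\begin{equation*}
G(t) \lesssim G(0) + M\int_0^t G(r)|\nabla \xi^{(2)}_r|_0^2\, dr,
\end{equation*}
and Gronwall together with $\int_0^T |\nabla \xi^{(2)}_r|_0^2\, dr \leq |\xi^{(2)}_0|_0^2/(2\vartheta)$ produces \eqref{eq:Contraction}. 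Uniqueness is then immediate: equal initial velocities give $\eta_0 = 0$ and $\bar u^{(1)}_0 = \bar u^{(2)}_0$, forcing $\eta \equiv 0$ and $\bar u^{(1)} \equiv \bar u^{(2)}$, and Lemma \ref{lem:EquivalenceDefinitions} together with the Biot-Savart law then gives $u^{(1)} \equiv u^{(2)}$. The principal obstacle is showing that the tensorization and renormalization scheme of Section \ref{sec:Energy} still closes in the presence of the cross drift $(u_r \cdot \nabla)\xi^{(2)}_r$ (in particular, that its blow-up is uniformly integrable and that the resulting $\eta^{2,\natural}$ remains a genuine $p/3$-variation remainder on $\mathcal{E}^{-3}_\nabla$); once this is in place, the remainder of the proof is essentially the classical two-dimensional vorticity argument.
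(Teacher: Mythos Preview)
Your proposal is correct and follows essentially the same strategy as the paper: derive the $L^2$-identity for $\eta=\xi^{(1)}-\xi^{(2)}$ by rerunning the tensorization/renormalization scheme of Section~\ref{sec:Energy} on the difference equation, then estimate the cross-term and close with Gronwall together with Lemma~\ref{lem:MeanContractive} and the enstrophy balance for $\xi^{(2)}$.

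One point worth tightening: your estimate of the cross-term carries only $|\nabla\eta_r|_0^{1/2}$, so the quadratic Young inequality you invoke does not directly absorb it, and the Gronwall inequality you write (with the factor $M$ and integrand $G(r)|\nabla\xi^{(2)}_r|_0^2$) does not follow from your bound as stated. The paper sidesteps this by first integrating by parts, using $\nabla\cdot u_r=0$ to rewrite $(u_r\cdot\nabla\xi^{(2)}_r,\eta_r)=-(u_r\cdot\nabla\eta_r,\xi^{(2)}_r)$, which places a full power of $|\nabla\eta_r|_0$ in the estimate $|u_r|_{L^4}|\xi^{(2)}_r|_{L^4}|\nabla\eta_r|_0\lesssim |u_r|_1|\xi^{(2)}_r|_1|\nabla\eta_r|_0$. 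A standard $ab\le C_\epsilon a^2+\epsilon b^2$ then gives $C_\epsilon|u_r|_1^2|\xi^{(2)}_r|_1^2+\epsilon|\nabla\eta_r|_0^2$, and Gronwall with $\int_0^T|\xi^{(2)}_r|_1^2\,dr\lesssim|\xi^{(2)}_0|_0^2$ yields exactly the exponent in \eqref{eq:Contraction}. Your variant can also be closed (e.g.\ via a $p=4$ Young and H\"older in time on $|\nabla\xi^{(2)}_r|_0^{4/3}$), but with a different exponent than the one stated.
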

\begin{proof}
Formally, $\xi= \xi_{\theta}^{(1)} - \xi_{\theta}^{(2)}$ solves 
\begin{align*}
\partial_t \xi &  + u^{(1)}\cdot \nabla \xi^{(1)} - u^{(2)} \cdot\nabla \xi^{(2)}  = \vartheta \Delta \xi + \sigma_k\cdot \nabla \xi \dot{z}^k.
\end{align*}
In the same way as in Theorem \ref{theorem:EnergyEquality}, we derive 
\begin{equation}
|\xi_t|_0^2 + 2 \vartheta \int_0^t |\nabla \xi_r|_0^2 dr + \int_0^t (u^{(1)}_r \cdot\nabla \xi^{(1)}_r - u^{(2)}_r\cdot \nabla \xi^{(2)}_r, \xi_r) = |\xi_0|_0^2.
\end{equation}
Now, write
\begin{align*}
(u^{(1)}_r \cdot\nabla \xi^{(1)}_r - u^{(2)}_r \cdot\nabla \xi^{(2)}_r, \xi_r) &
= (u^{(1)}_r\cdot \nabla \xi_r  , \xi_r) + (u_r \cdot\nabla \xi^{(2)}_r  , \xi_r)  = - (u_r\cdot \nabla \xi_r , \xi_r^{(2)}),
\end{align*}
which upon applying Young's inequality $ab \leq C_{\epsilon} a^2 + \epsilon b^2$ and ithe nterpolation inequality $|\cdot |_{L^4} \lesssim |\cdot|_1$, yields
\begin{align*}
|\xi_t|_0^2 + &  2 \vartheta \int_0^t |\nabla \xi_r|_0^2 dr = |\xi_0|_0^2 + \int_0^t (u_r\cdot \nabla \xi_r , \xi_r^{(2)}) dr \leq |\xi_0|_0^2 + \int_0^t |u_r|_{L^4} |\xi^{(2)}_r|_{L^4} |\nabla \xi_r|_0  dr \\
& \lesssim |\xi_0|_0^2 + \int_0^t |u_r|_{1} |\xi^{(2)}_r|_{1} |\nabla \xi_r|_0  dr \leq |\xi_0|_0^2 + C_{\epsilon} \int_0^t |u_r|_{1}^2 |\xi^{(2)}_r|_{1}^2 dr +  \epsilon \int_0^t |\nabla \xi_r|_0^2  dr.
\end{align*}
For $\epsilon$ small enough depending only on $\vartheta$, we get
\begin{align*}
|\xi_t|_0^2 + &  \vartheta \int_0^t |\nabla \xi_r|_0^2 dr \leq |\xi_0|_0^2 + C_{\epsilon} \int_0^t |u_r|_{1}^2 |\xi^{(2)}_r|_{1}^2 dr.
\end{align*}
Using the Biot-Savart law and Lemma \ref{lem:MeanContractive}, we find
\begin{equation} \label{eq:MeanDifference}
|u_r|_1^2  = |\bar{u}_r + v_r |_0^2 + |\nabla v_r|^2_0 \lesssim |\bar{u}_0|^2 + \sup_{\theta \leq r} |v_{\theta }|_{1}^2  \lesssim  |\bar{u}_0|^2 +  \sup_{\theta \leq r} | \xi_{\theta }|_{0}^2,
\end{equation}
which gives
\begin{align*}
\sup_{\theta \leq t} |\xi_{\theta}|_0^2 + &  \vartheta \int_0^t |\nabla \xi_r|_0^2 dr \leq C \left( |\xi_0|_0^2 + |\bar{u}_0|^2 \int_0^t  |\xi^{(2)}_r|_{1}^2 dr + \int_0^t  |\xi^{(2)}_r|_{1}^2 \sup_{\theta \leq r} |\xi_{\theta}|_0^2 dr \right). 
\end{align*}
Gronwall's inequality then implies
\begin{align*}
\sup_{\theta \leq t} |\xi_{\theta}|_0^2 + 2 \vartheta \int_0^t |\nabla \xi_r|_0^2 dr  & \leq C \left( |\xi_0|_0^2 + |\bar{u}_0|^2 \int_0^t  |\xi^{(2)}_r|_{1}^2 dr \right) \exp\left\{  C \int_0^t  |\xi^{(2)}_r|_{1}^2  dr \right\} \\
& \leq C \left( |\xi_0|_0^2 + |\bar{u}_0|^2  \right) \exp\left\{  C |\xi_0^{(2)}|_0^2 \right\},
\end{align*}
where we have used Theorem \ref{theorem:EnergyEquality} for $\xi^{(2)}$ in the last inequality, and the constant $C$ may vary from line to line. This shows \eqref{eq:Contraction}.

To see that this implies uniqueness of \eqref{eq:RNSDiffFormSys}, assume $u^{(1)}_0 = u^{(2)}_0$. From \eqref{eq:Contraction} we get that $\xi^{(1)} = \xi^{(2)}$. From \eqref{eq:MeanDifference} we find that $u^{(1)} = u^{(2)}$. 
\end{proof}

\subsection{Stability}
\label{s:stab}

In this section, we prove Corollary \ref{cor:stability}. Since it is similar to the proof of (forthcoming) Theorem \ref{existenceThm}, we only sketch the main steps here.

\begin{proof}[Proof of Corollary \ref{cor:stability}]
Consider a sequence $(u^n_0, \sigma^n, \bZ^n)_{n \geq 1} \in \bH^1\times (\W^{3, \infty}_{\Div}\big)^K \times \mathcal{C}^{p-\textnormal{var}}_g([0,T];\bR^K)$ converging to some element $(u_0, \sigma, \bZ)$ in this space. From Theorem \ref{thm:UniqandEnEq2d} we have
$$
\sup_{ t \in [0,T]} |\xi_t^n|^2_0 +2\vartheta \int_0^T |\nabla \xi^n_r|_0^2\,dr= |\xi_0^n|_0^2 . 
$$
As in the proof of Theorem \ref{existenceThm}, we can deduce that $\{u^n\}$ remains in a bounded set of $L_T^2 \bH^2 \cap L_T^{\infty}\bH^1 \cap C^{p-\textnormal{var}}([0,T]; \bH^{0})$, and thus there exists a subsequence, $\{u^{n_k}\} $ converging to some $u$ in $C_T \bH^0 \cap L_T^2 \bH^1$. Moreover, by the assumptions on $(\sigma^n, \bZ^n)_{n \geq 1}$,  the corresponding unbounded rough drivers, denoted by $(\clA^{n,1}, \clA^{n,2})$, converge to $(\clA^{1}, \clA^{2})$ in the strong topology; that is, $\clA^{n,i}$ converges to $\clA^i$ in the strong topology of $\mathcal{L}(\bH^{k}, \bH^{k-i})$ for $i\in \{1,2\}$. Taking the limit gives that $u$ satisfies \eqref{SystemSolutionU}. By uniqueness of solutions in dimension two, Theorem~\ref{thm:UniqandEnEq2d}, we get that the full sequence $\{u^n\}$ must converge, thus showing continuity of the solution map.

Suppose now that $B$ is a Brownian motion and let $B^n$ denote a piecewise linear approximation of $B$. It is well known that $(B^n, \mathbb{B}^n)$ converges $\mathbb P$-a.s. in the rough path topology to $(B, \mathbb{B})$ where $\mathbb{B}_{st}^{i,j} := \int_s^t B_{sr}^i \circ dB^j_r$ is the Stratonovich integral. For a fixed $\phi \in \bH^2$, we have as in \cite[Corollary 5.2]{FrHa14} that the rough path integral $\int_0^{\cdot} (u_r, (\nabla \sigma_k)\phi -\Div(\sigma_k \phi)) d \mathbf{B}_r$ and the Stratonovich integral $\int_0^{\cdot} (u_r, (\nabla \sigma_k)\phi - \Div(\sigma_k \phi)) \circ d B_r$ coincide on a set, $\Omega_{\phi}$, of full measure. Choosing a dense subset $\{\phi_l\}_{l \in \mathbb{N}}$ of $\bH^2$ and letting $\Omega_0 := \cap_{l \in \mathbb{N}} \Omega_{\phi_l}$ we see that the solutions must agree on $\Omega_0$.
From the above continuity, we obtain the claimed Wong-Zakai result. 
\end{proof}

\section{Existence}
\label{Section:Galerkin}

In this section, we establish existence of a strong solution as formulated in Theorem \ref{existenceThmNoGalerkin} based on a  Galerkin approximation.
For $d\in \{2,3\}$, let $\{h_n\}_{n=0}^{\infty}$ be the smooth eigenfunctions of the Stokes operator $
-P\Delta$ on $\bT^d$ with corresponding eigenvalues $\{\lambda_n\}_{n=0}^{\infty}$ where $\lambda_{0}=0$ (corresponding to $h_{0}\equiv \rm{const}$) and $\lambda_{n}>0$ for $n\in\bN$.
We choose the eigenfunctions $\{h_n\}_{n=0}^{\infty}$ such that they form an orthonormal basis of $\bH^0$ and an orthogonal basis of $\bH^1$. For a given $n\in \bN$, define $l_n=\lambda_n^{-\frac{1}{2}}\nabla \times h_n$. It can easily be verified that $(\nabla^{\perp}f,g) = (f,\nabla \times g)$ in $d=2$ and $(\nabla \times f, g)= (f,\nabla \times g)$ in $d=3$. Thus $\{l_n\}_{n=1}^{\infty}$ forms an orthonormal basis of $\dot{\bH}^0$ and we have
\begin{equation}\label{eq:projectionidentity}
(f,h_n)\nabla \times h_n=(\nabla \times f  ,l_n)l_n.
\end{equation}

For a given $N\in \mathbf{N}$, let
$\bH_N=\operatorname{span}(\{h_{0},h_1,\ldots,h_N\})$ and $\bL_N=\operatorname{span}(\{l_1,\ldots,l_N\})$, and define   $P_N: \bH^{-1}\rightarrow \bH_N$ and $L_N: \dot{\bH}^{-1}\rightarrow \dot{\bL}_N$  by 
$$P_N\varv:=\sum_{n=0}^N(\varv,h_n) h_n, \quad L_N\varv:=\sum_{n=1}^N(\varv,l_n) l_n, \quad  \varv\in \bH^{-1}.$$
It follows from \eqref{eq:projectionidentity} that
\begin{equation}\label{eq:mainprojectionidentity}
\nabla \times P_N\varv =L_N (\nabla \times \varv), \quad \forall \varv\in \bH^{-1}.
\end{equation}

Since $\mathbf{Z}\in C_g^{p-\textnormal{var}}([0,T];\bR^K)$ is a geometric rough path, there is a sequence  of $\bR^K$-valued smooth paths  $\{z^{N}\}_{N=1}^{\infty}$ such that their canonical lifts $\bZ^N=(Z^N,\mathbb{Z}^N)$ converge to $\bZ$ in the rough path topology.  We assume that
\begin{equation} \label{ineq:RPBounds}
|Z_{st}^{N} | \lesssim \omega_Z(s,t)^{\frac{1}{p}},\quad |\mathbb{Z}^{N}_{st}| \lesssim \omega_Z(s,t)^{\frac{2}{p}}, \quad \forall (s,t)\in \Delta_T.
\end{equation}
For convenience, we denote by $N_0$ a constant that bounds  $\sigma=(\sigma_1,\ldots,\sigma_K)$ and its derivatives up to order two. 

The following $N$-th order Galerkin approximation of \eqref{eq:RNSDiffForm}
\begin{equation}\label{eq:SmoothNSDiffForm}
\partial_t u^N + P_NB_P(u^N) =  \vartheta P_N \Delta u^N+  \sum_{k=1}^{K} P_N P[(\sigma_k\cdot \nabla) u^N+(\nabla\sigma_k)u^N]  \dot{z}^{N,k}_t,
\end{equation}
with initial condition $u^N(0)=P_N u_0$
gives a system of ODEs with locally Lipschitz coefficients. Consequently there exists a time interval $[0,T_N)$, for some $T_N>0$ and a unique solution 
$u^N$
of \eqref{eq:SmoothNSDiffForm} on the time  interval $[0,T_N)$. 

Integrating \eqref{eq:SmoothNSDiffForm} over the interval $[s,t]$ we find
\begin{equation} \label{eq:URDGalerkinApprox}
\delta u^N_{st} =  \int_s^t \left( \vartheta P_N \Delta  u^N_r - P_N B_P(u_r^N) \right) \, dr +  \clA_{st}^{N,1} u_s^{N} +  \clA_{st}^{N,2} u_s^{N} + u_{st}^{N, \natural} ,
\end{equation}
where    $\tilde{P}_N := P_N P$, and $\clA^{N,i}_{st}$ and $u_{st}^{N, \natural}$ are defined as in \eqref{eq:URDdef} and \eqref{eq:ExplicitRemainder} respectively with $P$ replaced by $\tilde{P}_N$ and $\bZ$ replaced by $\bZ^N$.

Owing to \eqref{A1bound} and \eqref{ineq:RPBounds}, we have that $(\clA^{N,1}, \clA^{N,2} )$ is uniformly bounded in $N$ as a family of unbounded rough drivers on the scale $( \bH^{m} )_{m }$. That is, there exists a control $\omega_{\clA^N}$ such that \eqref{ineq:UBRcontrolestimates} holds and  for all $(s,t)\in \Delta_T$,
\begin{equation} \label{UniformGalerkinURD}
\omega_{\clA^N}(s,t)\lesssim_{N_0}  \omega_{Z}(s,t).
\end{equation}

It is straightforward to check that    $u^{N, \natural} \in C_2^{\frac{p}{3} -\textnormal{var}}([0,T_N); \bH_N)$ by estimating term-by-term; one makes use of  \eqref{A1bound},  \eqref{trilinear form estimate}, and that $u^N$ is smooth in space and $z^N$ is smooth in time. For given $(s,t)\in \Delta_{T_N}$, let $\omega_{N,\natural}(s,t) := |u^{N,\natural}|^{\frac{p}{3}}_{\frac{p}{3} -\textnormal{var};[s,t];\bH^{-2}}$.
Arguing as in Lemma \ref{Thm2.5}, we find that there is an $L>0$ such that for all $(s,t)\in \Delta_{T_N}$ with  $\omega_{Z}(s,t)\le L$, 
\begin{equation} \label{eq:GalerkinRemainder}
\omega_{N, \natural}(s,t)  \lesssim_p|u^N|^{\frac{p}{3}}_{L_T^{\infty} \bH^1} \omega_{\clA^N}(s,t)  + ( 1 +|u^N|_{L_T^{\infty} \bH^1})^{\frac{2p}{3}}  (t-s)^{\frac{p}{3}} \omega_{\clA^N}(s,t)^{\frac{1}{3}} . 
\end{equation}

In order to obtain uniform energy bounds on $u^N$, we first derive the equation for the vorticity $\xi^N:=\nabla \times u^N$.  Let
$$
\clL_{v}\phi = (v\cdot \nabla) \phi -\mathbf{1}_{d=3} ( \nabla v )\phi, \quad \clL^N_{v}=L_N\clL_{v}.
$$
Using properties of the curl operator in Section \ref{ss:notation} and
\eqref{eq:mainprojectionidentity}, we find that $\xi^N$ satisfies
\begin{equation}\label{eq:SmoothVor}
\partial_t \xi^N +\clL_{u^N}^N\xi^N=  \vartheta L_N \Delta \xi^N+ \clL_{\sigma_k}^N\xi^N \dot{z}^{N,k}_t.
\end{equation}

Obtaining uniform bounds in dimension two is the simplest, due to the conservative nature of  the equation.   However, this is no longer possible in dimension three. Indeed, there is an additional stretching term in the drift and a lower order term in the noise which forces us to use a non-linear version of the rough Gronwall's inequality, Lemma \ref{lem:RoughBihari}.

Let us  begin with the case $d=2$. Testing \eqref{eq:SmoothVor} by $\xi^N$  and using \eqref{eq:B prop}, integration by parts and the fact that  $\Div \sigma_k=0$, for all $k\in \{1,\ldots,K\}$, we obtain 
\begin{align*}
|\xi^N_t|_0^2 +  2 \vartheta\int_0^t |\nabla \xi^N_s|_0^2\, ds& =  |L_N \xi_0|_0^2 - 2 \int_0^t ((u^N_s\cdot \nabla)\xi^N_s,\xi^N_s)\,ds  +  2\int_0^t ((\sigma_k \cdot\nabla) \xi^N_s, \xi^N_s)dz_s^{N,k} \\
&= |L_N\xi_0|_0^2 \leq |\xi_0|_0^2, \quad \forall t\in [0,T_N).
\end{align*}
Let $$v^N=u^N-\bar{u}^N=u^N-\int_{\bT^d} u^Ndx.$$
Owing to the Poincar\'e inequality and \eqref{ineq:BSGrad} we have $
|v^N|_0\lesssim |\nabla v^N|_0=|\xi^N|_0,$ and $
|\nabla^2v^N|_0 = |\nabla \xi^N|_0,
$
hence
\begin{align*}
| v^N_t|_1^2 +  2 \vartheta\int_0^t |\nabla^2 u^N_s|_0^2\, ds \lesssim |\nabla u_0|_0^2, \quad \forall t\in [0,T_N).
\end{align*}
We obtain bounds on $\sup_{t\in [0,T_N)}|\bar{u}^N|$  as in Lemma \ref{lem:aPrioriMean}, giving \eqref{VelocityEnergy} with $u$ replaced by $u^N$ and $T^*$ replaced by $T_N$. For a general time $T$ it is standard to extend the solution to $[0,T]$.

Let us  turn our attention to dimension three. It is not possible to obtain an enstrophy bound independent of the noise approximation like we did in two-dimensions since the noise is not enstrophy conservative due to the presence of the term $(\nabla\sigma_k )\xi^N$. 

Integrating  \eqref{eq:SmoothVor} over the interval $[s,t]$ 
we find 
\begin{equation} \label{eq:RoughSmoothVor}
\delta \xi^N_{st} = \delta \gamma_{st}^N +  [A_{st}^{N,1} +A_{st}^{N,2}]\xi_s^{N}   + \xi_{st}^{N, \natural} ,
\end{equation}
where  
$$
\gamma_t^N : =  \int_0^t \left( \vartheta L_N\Delta \xi_r^N  - \clL_{u^N_r}\xi^N_r \right) \, dr,\quad A^{N,1}_{st} \phi :=   \clL^N_{\sigma_k}\phi  Z_{st}^{N,k},\quad A_{st}^{N,2} \phi  :=  \clL^N_{\sigma_k}  \clL^N_{\sigma_l} \phi\mathbb{Z}^{l,k}_{st},
$$
and
\begin{align*}
\xi_{st}^{N, \natural} & :=   \int_s^t\clL^N_{\sigma_k}\delta \gamma_{sr}^N\, dz_r^{N,k}  +  \int_s^t \int_s^r   \clL^N_{\sigma_k} \clL^N_{\sigma_l} \left[\delta \mu^N_{s r_{1}}+ \int_{s}^{r}\clL^N_{\sigma_m}\xi_{r_2}^N\,dz_{r_1}^{N,m}\right] \,dz_{r_1}^{N,l}dz_r^{N,k}.
\end{align*}

We proceed by deriving the equation for 
$$
\Xi^N=\xi^N\otimes \xi^N=[\xi^{N,i}\xi^{N,j}]_{1\le i,j\le d}.
$$
Defining the symmetric tensor $a\st b := \frac12 (a\otimes b+b\otimes a)$, we have
$
\delta \Xi^N_{st} =2\xi^N_s \st \delta \xi^N_{st}+ (\delta\xi^N_{st})^{\otimes2},
$
and hence
\begin{align} \label{eq:URDSqGalerkinApproxdiff}
\delta\Xi^N_{st}&= \delta \Pi^N_{st}+ [\Gamma^{N,1}_{st}+\Gamma^{N,2}_{st}]\Xi^{N}_s+ \Xi^{N,\natural}_{st},
\end{align}
where
\begin{gather*}
\Pi_{t}^N:=2\int_{0}^t \xi^N_r\st  \left( \vartheta L_N\Delta \xi_r^N  - \clL_{u^N_r}\xi^N_r \right) dr, \quad 
\Gamma_{st}^1:=2A^{N,1}_{st}\hat{\otimes} I, \quad \Gamma_{st}^2:=2A_{st}^{N,2}\hat{\otimes} I+A_{st}^{N,1}\hat{\otimes} A_{st}^{N,1},\\
\Xi^{N,\natural}_{st}=2\xi^N_s\st \xi_{st}^{N, \natural}-2\int_{s}^t\delta \xi^N_{sr}\st \left( \vartheta L_N\Delta \xi_r^N  - \clL_{u^N_r}\xi^N_r \right) dr
+(\delta\xi^N_{st})^{\otimes2}-A^{N,1}_{st}\hat{\otimes} A^{N,1}_{st} \Xi^N_s.
\end{gather*}
By virtue of \eqref{A1bound} and \eqref{ineq:RPBounds}, we have that $(\Gamma^{N,1}, \Gamma^{N,2} )$ is uniformly bounded in $N$ as a family of unbounded rough drivers on the scale $( \dot{\bW}^{m,\infty}(\bT^3;\bR^{3\times 3}) )_{m }$. That is, there exists a control $\omega_{\Gamma^N}$ such that \eqref{ineq:UBRcontrolestimates} holds and  for all $(s,t)\in \Delta_T$,
$$
\omega_{\Gamma^N}(s,t)\lesssim_{N_0}  \omega_{Z}(s,t).
$$
Let us denote by $|\cdot|_{m,\infty}$ the norm on ${\bW}^{m,\infty}(\bT^3;\bR^{3\times 3})$ and for notational simplicity $|\cdot|_{\infty}=|\cdot|_{0,\infty}$.
To find a control for $\Pi^N$, we need to estimate
$$
\Pi^N_{st}(\Phi)=2\int_s^t \xi^N_r\st  \vartheta L_N\Delta \xi_r^N  (\Phi)dr -2\int_s^t \xi^N_r\st  \clL_{u^N_r}\xi^N_r  (\Phi)dr =: I + II.
$$
Applying Young's inequality, we find
\begin{align*}
I&=-2\vartheta \int_s^t \int_{\bT^3}\partial_l\xi^{N}_r \st  \partial_l \xi_r^{N} (\Phi)dx dr-2\vartheta \int_s^t \int_{\bT^3}\xi^{N}_r  \st \partial_l \xi_r^{N}( \partial_l \Phi)dx dr\\
&\le   2\vartheta|\nabla \Phi|_{\infty} \left( \int_s^t|\nabla \xi_r^N|^2_0dr+\int_s^t\sup_{s\le r'\le r}| \xi_{r'}^N|^2_0dr\right).
\end{align*}
We split $II$ into two quantities $III$ and $IV$ and then estimate them separately:
$$
- II=2\int_s^t \int_{\bT^3}\xi^{N}_r\st [(u^{N}_r\cdot \nabla )\xi^{N}_r] (\Phi) dxdr+\int_s^t \int_{\bT^3}\xi^{N}_r\st [(\nabla u^{N}_r)\xi^{N}_r] (\Phi)dxdr =: III + IV.
$$
Using the interpolation inequality $|f|_{L^4} \leq C_3 |f|_0^{\frac{1}{4}} |\nabla f|_0^{\frac{3}{4}}$ for $d=3$, H\"older's and Young's inequality, Lemma \ref{lem:aPrioriMean}, and \eqref{ineq:BSGrad} we get
\begin{align*}
III&\lesssim |\Phi|_{\infty}\int_s^t|\xi^{N}_r|_{0}^{\frac{1}{4}}|\nabla \xi^{N}_r|_{0}^{\frac{3}{4}}|u^{N}_r|_{0}^{\frac{1}{4}}|\nabla u^{N}_r|_{0}^{\frac{3}{4}}|\nabla \xi^{N}_r|_{0} dr \lesssim |\Phi|_{\infty}\int_s^t|\xi^{N}_r|_{0}|u^{N}_r|_{0}^{\frac{1}{4}}|\nabla \xi^{N}_r|_{0}^{7/4} dr\\
&\lesssim  |\Phi|_{\infty}\left(\int_s^t|u^{N}_r|_{0}^{2}|\xi^{N}_r|_{0}^8dr + \int_{s}^t|\nabla \xi^{N}_r|_{0}^{2} dr\right)\lesssim  |\Phi|_{\infty}\left(\int_s^t|\bar{u}^{N}_r|_{0}^{2}|\xi^{N}_r|_{0}^8dr +\int_s^t|\xi^{N}_r|_{0}^{10}dr+ \int_{s}^t|\nabla \xi^{N}_r|_{0}^{2} dr\right)\\
&\lesssim  |\Phi|_{\infty}\left(\int_s^tw_1\left(\sup_{s\le r'\le r}|\xi^N_{r'}|_0\right)dr+ \int_{s}^t|\nabla \xi^{N}_r|_{0}^{2} dr\right),
\end{align*}
where
$$
w_1(y):=(1 +   |\bar{u}_0|) \exp \left\{ C(1 + y)^p \right\} y^{8}+y^{10} .
$$
Similarly,
\begin{align*}
IV&\lesssim |\Phi|_{\infty}\int_s^t \int_{\bT^3}|\xi^{N}_r|_0^{\frac{1}{2}}|\nabla \xi^{N}_r|_0^{\frac{6}{4}}|\nabla u^{N}_r|_0dr dx\lesssim |\Phi|_{\infty}\int_s^t |\xi^{N}_r|_0^{\frac{3}{2}}|\nabla \xi^{N}_r|_0^{\frac{6}{4}}dr \\
&\lesssim   |\Phi|_{\infty}\left(\int_s^t \sup_{s\le r'\le r}|\xi^{N}_{r'}|_0^{6}dr +\int_s^t |\nabla \xi^{N}_r|^{2}_0dr\right).
\end{align*}
Therefore, 
$$
|\Pi_{st}^N|_{-1,\infty}\lesssim \omega_{\Pi^N}(s,t):=\int_s^t|\nabla \xi_r^N|^2_0dr+\int_s^t w_2\left(\sup_{s\le r'\le r}|\xi^{N}_{r'}|_0\right)dr,
$$
where
$$
w_2(y)=(1 +|\bar{u}_0|) \exp \left\{ C(1 + y)^p \right\} y^{8}+y^{10} +y^6+y^2.
$$

Using Theorem \ref{Thm2.5Abstract}, we obtain
\begin{align*}
|\Xi^{N,\natural}_{st}|_{-3,\infty}&\lesssim \sup_{s\le r\le t}|\Xi_r^N|_{-0,\infty} \omega_{Z}(s,t)^{\frac{3}{p}}+\omega_{\Pi^N}(s,t)\omega_{Z}(s,t)^{\frac{1}{p}}\\
&\lesssim \sup_{s\le r\le t}|\xi^N_r|_{0}^2 \omega_{Z}(s,t)^{\frac{3}{p}}+\omega_{Z}(s,t)^{\frac{1}{p}}\int_s^t|\nabla \xi_r^N|^2_0dr+\omega_{Z}(s,t)^{\frac{1}{p}}\int_s^t {w_{2}}\left(\sup_{s\le r'\le r}|\xi^{N}_{r'}|_0\right)dr.
\end{align*}

Testing \eqref{eq:URDSqGalerkinApproxdiff} against the $3 \times 3$ identity matrix, $I_3$, we find
\begin{align*}
\delta (|\xi^N|_{0}^2)_{st}&= \delta \Pi^N_{st}(I_3)+\Xi^N_s( [\Gamma^{N,1,*}_{st}+\Gamma^{N,2,*}_{st}](I_3))+ \Xi^{N,\natural}_{st}(I_3).
\end{align*}
Clearly,
$$
|\Xi^N_s( [\Gamma^{N,1,*}_{st}+\Gamma^{N,2,*}_{st}](I_3))|\le \sup_{s\le r\le t}|\xi^{N}_r|_0^2\omega_{Z}(s,t)^{\frac{1}{p}}.
$$
Moreover, using H\"{o}lder's inequality, the interpolation inequality $|f|_{L^4} \leq C_3 |f|_0^{\frac{1}{4}} |\nabla f|_0^{\frac{3}{4}}$ for $d=3$,  and Young's inequality $ab \leq C_{\epsilon} a^4 + \epsilon b^{4/3}$ for $\epsilon\in (0,1)$ to be determined later, we get 
\begin{align*}
\delta \Pi^N_{st}(I_3)&=-2\vartheta\int_s^t|\nabla \xi^N_r|_0^2dr+\int_s^t \int_{\bT^3}\xi^{N,i}_r\partial_iu^{N,j}_r \xi^{N,j}_rdxdr    \leq -2\vartheta\int_s^t|\nabla \xi^N_r|_0^2dr+\int_s^t |\xi^{N}_r|_{L^4}^2 |\nabla u^{N}_r|_0 dr \\
& \leq -2\vartheta\int_s^t|\nabla \xi^N_r|_0^2dr+ C_3 \int_s^t |\xi^{N}_r|_{0}^{\frac{3}{2}} |\nabla \xi^{N}_r|_0^{\frac{3}{2}} dr  \\
&  \leq -(2\vartheta- C_3 \epsilon)\int_s^t|\nabla \xi^N_r|_0^2dr+C_{\epsilon}C_3\int_s^t \sup_{s\le r'\le r}|\xi^{N}_{r'}|_0^{6}dr.
\end{align*}
Putting it all together we arrive at 
\begin{align*}
\delta (|\xi^N|_{0}^2)_{st} & \leq  \left[ C \omega_Z(s,t)^{\frac{1}{p}}-(2\vartheta-C_3 \epsilon)\right]\int_s^t|\nabla \xi^N_r|_0^2dr+ C \int_s^tw\left(\sup_{s\le r'\le r}|\xi^{N}_{r'}|_0\right)dr+ C \sup_{s\le r\le t}|\xi_r|_0^2\omega_{Z}(s,t)^{\frac{1}{p}},
\end{align*}
where 
$$
w(y)=(1 +|\bar{u}_0|) \exp \left\{ C(1 + y)^p \right\} y^{8}+y^{10} +(1+C_3C_{\epsilon})y^6+y^2.
$$
For $(s,t)\in \Delta_{T_N}$ and $\epsilon>0$ such  $C \omega_Z(s,t)^{\frac{1}{p}}-(2\vartheta- C_3\epsilon)\le - \vartheta$, we have 
\begin{gather*}
\delta (|\xi^N|_{0}^2)_{st}+\vartheta \int_s^t|\nabla \xi^N_r|_0^2dr \lesssim \int_s^tw\left(\sup_{s\le r'\le r}|\xi^{N}_{r'}|_0\right)dr+\sup_{s\le r\le t}|\xi_r|_0^2\omega_{Z}(s,t)^{\frac{1}{p}}.
\end{gather*}
Applying Lemma \ref{lem:RoughBihari} on $[0,T_N)$, we get 
$$
\sup_{0\le r \le T^* \wedge T_N}|\xi^N_r|_{0}^2 \leq W^{-1} \left( W( q|\xi^N_0|_{0}^2 ) + T^* q \right)
$$
is such that 
$$
W(q |\xi^N_0|_{0}^2) + T^*q \in \operatorname{Dom}( W^{-1})
$$
and $W,q,T^{*}$ is given as in Lemma  \ref{lem:RoughBihari}.
Note that $T^{*}$ can be chosen independently of $N$.
It follows that we may extend $\xi^N$ up to time $T^*$ and the $\bH^0$-norm of $\xi^N$ is bounded up to time $T^*$. Hence $u^N$ does not blow up in the time interval $[0,T^*]$ and is a solution of \eqref{eq:SmoothNSDiffForm} on the full time interval. Moreover $\xi^N\in C_{T^*}\dot\bH^0\cap L^2_{T^*}\dot\bH^1$ solves \eqref{eq:SmoothVor}.

We are now ready to send $N$ to $\infty$ in \eqref{eq:URDGalerkinApprox}, which in particular proves Theorem \ref{existenceThmNoGalerkin}. The details are given in the following result. Throughout the rest of this section, let $T^{*}=T$ if $d=2$ whereas for $d=3$ let $T^{*}>0$ be the final time obtained by means of Lemma \ref{lem:RoughBihari} above. 

\begin{theorem} \label{existenceThm}
There exists a subsequence of $\{ u^N \}_{N=1}^{\infty}$ that converges  weakly in $L^2_{T^{*}}\bH^2$, weak-* in $L^{\infty}_{T^{*}}\bH^1$, and strongly in $L_{T^{*}}^{2}\bH^1 \cap C_{T^{*}}\bH^{0}$  to a solution of \eqref{SystemSolutionU} that is weakly continuous in $\bH^1$.
\end{theorem}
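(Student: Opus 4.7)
The plan is to leverage the uniform energy estimates established above (the $L^\infty_{T^*}\bH^1 \cap L^2_{T^*}\bH^2$ bounds on $u^N$ in both dimensions, together with the uniform bounds on $\omega_{N,\natural}$ and on $|u^N|_{p-\textnormal{var};[0,T^*];\bH^0}$ coming from Lemmas \ref{Thm2.5} and \ref{AprioriVariation}) to extract limits, and then identify the limit as a solution of \eqref{SystemSolutionU}. First, by Banach--Alaoglu, a subsequence of $\{u^N\}$ converges weakly in $L^2_{T^*}\bH^2$ and weak-$*$ in $L^\infty_{T^*}\bH^1$ to some $u$. For strong compactness in $L^2_{T^*}\bH^1 \cap C_{T^*}\bH^0$, I will combine the uniform spatial regularity with the uniform $p$-variation bound in $\bH^0$ from Lemma \ref{AprioriVariation}, which gives uniform equicontinuity in $\bH^0$ after interpolation, and invoke an Aubin--Lions/Arzel\`a--Ascoli type argument (using the compact embeddings $\bH^2 \hookrightarrow \bH^1 \hookrightarrow \bH^0$ on $\bT^d$).

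Next I would pass to the limit in the drift. The term $\int_s^t \vartheta (\Delta u^N_r, P_N\phi)\,dr$ is handled by the $L^2_{T^*}\bH^2$-weak convergence together with $P_N\phi \to \phi$ in $\bH^2$. For the nonlinear term $\int_s^t B_P(u^N_r)(P_N\phi)\,dr$, strong convergence in $L^2_{T^*}\bH^1$ combined with the continuity estimate \eqref{trilinear form estimate} (with $m_1 = m_3 = 1$, $m_2 = 0$) suffices: $b(u^N,u^N,P_N\phi) - b(u,u,\phi) = b(u^N-u,u^N,P_N\phi) + b(u,u^N-u,P_N\phi) + b(u,u,P_N\phi - \phi)$, each piece going to zero by the strong $L^2_{T^*}\bH^1$ convergence and the uniform $L^\infty_{T^*}\bH^1$ bound.

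The main obstacle is passing to the limit in the rough driver terms $u^N_s([\clA^{N,1,*}_{st} + \clA^{N,2,*}_{st}]P_N\phi)$ and in the remainder. For fixed $(s,t)$ and $\phi$, the operators $\clA^{N,1}_{st}$ and $\clA^{N,2}_{st}$ (built from $\bZ^N$ and the frequency cut-off $\tilde{P}_N = P_N P$) converge in the strong operator topology on $\mathcal{L}(\bH^k,\bH^{k-i})$ to $\clA^1_{st}, \clA^2_{st}$ as $N \to \infty$, by the rough-path convergence $\bZ^N \to \bZ$ and the fact that $P_N \to I$ strongly on each $\bH^k$. Combined with strong convergence of $u^N_s$ in $\bH^0$ along a further subsequence extracted diagonally in $s$, and the uniform control \eqref{UniformGalerkinURD}, this identifies the limit of $u^N_s([\clA^{N,1,*}_{st} + \clA^{N,2,*}_{st}]P_N\phi)$ as $u_s([\clA^{1,*}_{st} + \clA^{2,*}_{st}]\phi)$. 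Defining $u^\natural_{st}(\phi)$ through \eqref{SystemSolutionU}, the uniform bound \eqref{eq:GalerkinRemainder} on $\omega_{N,\natural}$ transfers to $u^\natural$ by lower semicontinuity of the $\tfrac{p}{3}$-variation seminorm under pointwise limits, so that $u^\natural \in C^{p/3-\textnormal{var}}_{2,\varpi,L}([0,T^*];\bH^{-2})$.

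Finally, weak continuity of $u:[0,T^*] \to \bH^1$ follows from the uniform $L^\infty_{T^*}\bH^1$ bound together with continuity of $u$ as a map into a weaker space: the right-hand side of \eqref{SystemSolutionU} shows that $t \mapsto u_t(\phi)$ is continuous for every $\phi \in \bH^2$ (the $dr$-integral is absolutely continuous, the rough driver terms are continuous in $t$ by continuity of $\bZ$, and $u^\natural_{st}(\phi) \to 0$ as $t \to s$), and a standard density argument together with the $\bH^1$-bound upgrades this to weak continuity in $\bH^1$. In dimension three, the construction is valid up to the time $T^*$ furnished by Lemma \ref{lem:RoughBihari}; in dimension two, the enstrophy bound makes $T^* = T$ admissible. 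This completes the identification of $u$ as a strong solution in the sense of Definition \ref{def:RNSStrongSol} and yields the energy inequality \eqref{VelocityEnergy} by weak lower semicontinuity.
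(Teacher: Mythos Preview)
Your proposal is correct and follows essentially the same strategy as the paper: uniform energy bounds plus the $p$-variation estimate from Lemma~\ref{AprioriVariation} give compactness (the paper invokes the specific compactness Lemmas~A.2 and~A.3 of \cite{HLN} rather than a generic Aubin--Lions/Arzel\`a--Ascoli argument, but the content is the same), and then one passes to the limit termwise in \eqref{eq:URDGalerkinApprox} using strong convergence of $u^N$ in $C_{T^*}\bH^0\cap L^2_{T^*}\bH^1$ together with strong operator convergence of $\clA^{N,i}_{st}\to\clA^{i}_{st}$. One small redundancy: once you have strong convergence in $C_{T^*}\bH^0$, you already have $u^N_s\to u_s$ in $\bH^0$ for \emph{every} $s$, so no diagonal subsequence extraction in $s$ is needed.
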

\begin{proof}

From the Biot-Savart law, we see that $v^N = \BS \xi^N$ remains in a bounded set in $L^2_{T^{*}}\bH^2\cap L^{\infty}_{T^{*}}\bH^1$. Moreover, from Lemma \ref{lem:aPrioriMean} we get that 
\begin{equation} \label{UniformGalerkinEnergy}
|\bar{u}^N|_{L^{\infty}_{T^{*}} \bR^d} \leq C \exp \{ C(1 + |\xi_0|_0)^p\} (1 + |\bar{u}^N_0|) \leq C \exp \{ C(1 + |\xi_0|_0)^p\} (1 + |u_0|_0),
\end{equation}
where the last inequality comes from $|\bar{u}^N_0| \leq |u_0|_0 $. This gives that $u^N = v^N + \bar{u}^N$ also remains in a bounded set in $L^2_{T^{*}}\bH^2\cap L^{\infty}_{T^{*}}\bH^1$, and we have
$$
|u^N|_{L^{\infty}_{T^{*}} \bH^1} + |u^N|_{L^{2}_{T^{*}} \bH^2} \leq f(|u_0|_1)
$$
for some continuous function $f$.
An application of Banach-Alaoglu yields a subsequence, which we will relabel as $\{u^N\}_{n=1}^{\infty}$, that converges   weakly in $L^2_{T^{*}}\bH^2$ and weak-* in $L^{\infty}_{T^{*}}\bH^1$. To obtain a further subsequence that converges strongly in $L_{T^{*}}^{2}\bH^1 \cap C_{T^{*}}\bH^{0}$, we shall  apply  Lemma A.2 \cite{HLN}; that is, we shall show there exist controls $\omega$ and $\bar{\omega}$ and $L,\kappa>0$  independent of $N$ such that  $|\delta u_{st}^N|_{0}\le \omega(s,t)^{\kappa}$ for all $(s,t)\in \Delta_{T^{*}}$ with $\bar{\omega}(s,t)\le L$. 
From Lemma \ref{AprioriVariation} we obtain using \eqref{eq:GalerkinRemainder}
\begin{align}
|\delta u^N_{st}|_{0} &  \lesssim F(|u_0|_1) ( \omega_{Z}(s,t) + (t-s) )^{ \frac{1}{p} }  \label{uniformSolution}  .
\end{align}
where we have used \eqref{UniformGalerkinURD} and $F$ is some continuous function coming from \eqref{UniformGalerkinEnergy} and \eqref{eq:GalerkinRemainder} combined with \eqref{UVariationWithoutMu} applied to $u^{N}$.

By the compact embedding from Lemma A.2 in \cite{HLN}, there is a subsequence of  $\{u^{N}\}_{N=1}^{\infty}$, which we keep denoting by $\{u^{N}\}_{N=1}^{\infty}$, converging strongly to an element $u $ in $C_{T^{*}}\bH^{0}\cap L^2_{T^{*}}\bH^1$. Furthermore, owing to Lemma A.3 in \cite{HLN}, we know that $u$ is continuous with values in $\bH^0_w$ (i.e., $ \bH^0$ equipped with the weak topology). 

Our goal now is to pass to the limit in \eqref{eq:URDGalerkinApprox} tested against some $\phi \in \bH^2$ as $N$ tends to infinity. Clearly, 
\begin{align*}
|\clA^{N,1}_{st} \phi - \clA^{1}_{st} \phi|_0  & \leq  \left| P_N  P \left[( \sigma_k \cdot \nabla) \phi\right]   Z_{st}^{N,k} -   P \left[(\sigma_k \cdot \nabla) \phi\right]   Z_{st}^{k}  \right|_{0}  + \left| P_N  P \left[( \nabla \sigma_k ) \phi\right]   Z_{st}^{N,k} -   P \left[( \nabla \sigma_k ) \phi\right]   Z_{st}^{k}  \right|_{0} \\
& \leq | P_N  P \left[(\sigma_k \cdot \nabla) \phi\right]  ( Z_{st}^{N,k} -    Z_{st}^{k} ) |_{0}  +  | (I - P_N)  P \left[(\sigma_k \cdot \nabla) \phi\right]   Z_{st}^{k} |_{0} \\
& + | P_N  P \left[(\nabla \sigma_k) \phi\right]  ( Z_{st}^{N,k} -    Z_{st}^{k} ) |_{0}  +  | (I - P_N)  P \left[(\nabla \sigma_k) \phi\right]   Z_{st}^{k} |_{0}.
\end{align*}
Making use of \eqref{A1bound}, we get
\begin{align*}
| P_NP(\sigma_k \cdot \nabla) \phi  |_{0} | Z_{st}^{N,k} -    Z_{st}^{k} | \lesssim_{N_0} | \phi  |_{1} | Z_{st}^{N} -    Z_{st}|, \quad | P_NP( \nabla \sigma_k ) \phi  |_{0} | Z_{st}^{N,k} -    Z_{st}^{k} | \lesssim_{N_0} | \phi  |_{0} | Z_{st}^{N} -    Z_{st}| 
\end{align*}
which both converge to $0$ as $N \rightarrow \infty$. For the remaining terms we notice that $P_N$ converges to $I$ with respect to  the strong topology on $\clL(\bH^0,\bH^0)$. Now, since $\phi \in \bH^2$, this implies that 
$$
\clA^{N,1}_{st} \phi \rightarrow \clA^{1}_{st} \phi \qquad \textrm{in } \quad \bH^0  \quad \textrm{ as } N \rightarrow \infty.
$$
In a similar way one can show that 
$$
\clA^{N,2}_{st} \phi \rightarrow \clA^{2}_{st} \phi \qquad \textrm{in } \quad \bH^0  \quad \textrm{ as } N \rightarrow \infty.
$$
and hence  
\begin{align*}
|(u^N_s,\clA_{st}^{N,i,*}\phi) - (u_s,\clA_{st}^{P,i,*}\phi)|&\le_{N_0}  | (u^N_s-u_s,\clA_{st}^{N,i,*}\phi) - (u_s,(\clA_{st}^{P,i,*}- \clA_{st}^{N,i,*})\phi)|\\
&\lesssim_{N_0}  |u^N_s-u_s|_{0}|\phi|_2+|u_s|_{0}|(\clA_{st}^{P,i,*}- \clA_{st}^{N,i,*})\phi|_{0}\rightarrow 0
\end{align*}
as $N \rightarrow \infty$.
Finally, using the  strong convergence in $L_{T^{*}}^2\bH^1$ of $\{u^N\}$ and \eqref{trilinear form estimate},  we find
\begin{align*}
\left|\int_s^t \left[B_P(u_r)(\phi) - B_P(u_r^N) (\phi) \right]\,dr \right|    &\leq \left| \int_s^t B_P(u_r - u_r^N,u_r)(\phi)\,dr \right| 
\quad + \left|\int_s^t  B_P(u_r^N,u_r- u_r^N) (\phi) \, dr \right| \\
&\lesssim  \int_s^t |u_r - u_r^N|_1 |u_r|_1 \, dr |\phi|_1    +  \int_s^t |u_r - u_r^N|_1 |u^N_r|_1 \, dr |\phi|_1 \rightarrow 0
\end{align*}
as $N \rightarrow \infty$.

Since all of the terms in equation \eqref{eq:URDGalerkinApprox} converge when applied to $\phi$,  the remainder $u^{N, \natural}_{st}(\phi)$ converges to some limit $u^{\natural}_{st}(\phi)$. 
Owing to the uniform bound \eqref{UniformGalerkinURD} in connection with \eqref{eq:GalerkinRemainder} we deduce that the limit $ u^{\natural} \in C^{\frac{p}{3}-\textnormal{var}}_{2, \varpi,L}([0,{T^{*}}]; \bH^{-2})$ for some control $\varpi$ depending only on $\omega_Z$ and $L> 0$ depending only on $p$, which proves that $u$ is a strong solution to \eqref{eq:RNSDiffForm}.
\end{proof}

\appendix

\section{Rough Gronwall lemma}
In this section, we formulate two Gronwall inequality involving controls.  The first one is a slight generalization of the Gronwall inequalities proved in \cite{DeGuHoTi16} and \cite{HH17}, and can be proved by the same reasoning. The second inequality is a corollary of the first inequality and the classical Bihari-LaSalle inequality.

\begin{lemma} \label{lem:RoughGronwall}
Assume that $G: [0,T] \rightarrow \bR_+$ is such that there exists constants $L>0$ and $\kappa>0$, and a  control $\omega$ such that for every $(s,t)\in \Delta_T$ with $\omega(s,t) \leq L$,
\begin{equation}
\delta G_{st} \leq \omega(s,t)^{\frac{1}{\kappa}} \sup_{0\le r\le t}G_t + \phi(s,t), 
\end{equation}
where  $\phi : \Delta_T \rightarrow \bR_+$ is such that $\phi(s,t) \leq \phi(0,T)$. Then there exists a constant $K>0$ depending only on $\omega$ such that 
$$
\sup_{ 0 \leq t \leq T} G_{t} \leq 2 \exp \left\{ \frac{\omega(0,T)}{L\alpha} \right\} \left( G_0 + K \phi(0,T)\right)
$$
where $\alpha :=  1 \vee L^{-1}(2 e^2)^{-\kappa}$.
\end{lemma}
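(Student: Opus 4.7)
The strategy is the standard discrete iteration argument for rough Gronwall inequalities, of the type developed in \cite{DeGuHoTi16, HH17}. The only novelty here, compared with those references, is that the right-hand side of the hypothesis features the running supremum $\sup_{r \leq t} G_r$ rather than the pointwise value $G_t$, which will be handled by passing immediately to the envelope $g(t) := \sup_{0 \leq r \leq t} G_r$.

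First, I would introduce $g$, note that $G_s \leq g(s)$ and $\phi(s,t) \leq \phi(0,T)$, and rewrite the hypothesis in the convenient form
\[
G_t \;\leq\; g(s) + \omega(s,t)^{1/\kappa}\, g(t) + \phi(0,T), \qquad \omega(s,t) \leq L.
\]
Using the continuity and superadditivity of the control $\omega$ together with the intermediate value theorem, I would then construct a partition $0 = t_0 < t_1 < \cdots < t_{M+1} = T$ with $\omega(t_i, t_{i+1}) \leq L/\alpha$ for every $i$ and $M \leq \omega(0,T)\alpha/L$. The peculiar shape $\alpha = 1 \vee L^{-1}(2e^2)^{-\kappa}$ is calibrated precisely so that the mesh $L/\alpha$ satisfies simultaneously (i) $L/\alpha \leq L$, making the hypothesis applicable on each sub-interval, and (ii) $(L/\alpha)^{1/\kappa} \leq (2e^2)^{-1}$, keeping the iteration constant below comfortably less than one.

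Next, for any $s \in [t_i, t_{i+1}]$, inserting the local inequality and taking $\sup_{s \in [t_i, t_{i+1}]}$ yields, after using $G_{t_i} \leq g(t_i)$, the implicit bound
\[
g(t_{i+1}) \;\leq\; g(t_i) + (L/\alpha)^{1/\kappa}\, g(t_{i+1}) + \phi(0,T),
\]
which rearranges into the one-step recursion $g(t_{i+1}) \leq c\, g(t_i) + c\, \phi(0,T)$ with $c := \bigl(1 - (L/\alpha)^{1/\kappa}\bigr)^{-1}$. Iterating across the $M+1$ sub-intervals gives $g(T) \leq c^{M+1}\bigl(G_0 + K\, \phi(0,T)\bigr)$ for a constant $K$ expressible in terms of $c/(c-1)$ and hence depending only on $\omega$.

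The final step is to translate $c^{M+1}$ into the announced exponential factor $2\exp\{\omega(0,T)/(L\alpha)\}$. Using $-\ln(1-x) \leq x/(1-x)$ for $x = (L/\alpha)^{1/\kappa} \leq (2e^2)^{-1}$, combined with $M+1 \leq \omega(0,T)\alpha/L + 1$, and handling the two regimes $\alpha = 1$ (i.e.\ $L$ not too small) and $\alpha = L^{-1}(2e^2)^{-\kappa}$ (i.e.\ $L$ small) separately, one extracts precisely the factor $1/(L\alpha)$ in the exponent; the numerical constant $2e^2$ is exactly what is needed to absorb the leftover multiplicative factors. Collecting constants delivers the stated inequality. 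The argument is purely algorithmic and contains no genuine conceptual difficulty; the only delicate point is this final piece of bookkeeping, which is precisely what motivates the unusual definition of $\alpha$.
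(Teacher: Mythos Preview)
The paper does not actually prove this lemma: it states only that it is a slight generalization of the rough Gronwall inequalities in \cite{DeGuHoTi16} and \cite{HH17} and ``can be proved by the same reasoning,'' the sole novelty being that $\phi$ is no longer required to be a control (or even superadditive). Your proposal is precisely that standard discrete-iteration argument from those references --- pass to the running supremum, partition $[0,T]$ according to the control $\omega$ so that each piece has small $\omega$-mass, obtain a one-step recursion for $g(t_{i+1})$ in terms of $g(t_i)$, and iterate --- so it matches what the paper intends.
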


\begin{remark}
The only difference between this version and the version in \cite{DeGuHoTi16} is that we do not require $\phi$ to be a control, or even superadditive as in \cite{HH17}.
\end{remark}

\begin{lemma}\label{lem:RoughBihari}
Assume that $W:\bR_+\rightarrow \bR_+$ is a non-decreasing continuous function with $W>0$ on $(0,\infty)$. Moreover, assume that  $G: [0,T] \rightarrow \bR_+$ is such that there exists constants $L>0$ and $\kappa>0$, and a  control $\omega$ such that for every $(s,t)\in \Delta_T$ with  $\omega(s,t) \leq L$, we have 
$$
\delta G_{s t} \leq C \int_s^t W\left(\sup_{s\le r' \le r}G_{r'}\right) dr  + \omega(s,t)^{\frac{1}{\kappa}} \sup_{0\le r\le t}G_{r}.
$$
Then
$$
\sup_{t\le T^*}G_{t} \leq W^{-1} \left( W\left(q  G_0\right) + T^*C q\right)
$$
where 
$$
q:=2 \exp \left\{ \frac{\omega(0,T)}{L\alpha}\right\},\;\;\alpha :=  1 \vee L^{-1}(2 e^2)^{-\kappa},
$$
$W$ is chosen such that $W'(x) = (w(x))^{-1}$, and $T^*>0$ is such that 
$$
W(q G_0) + T^*C q \in Dom( W^{-1}) . 
$$
\end{lemma}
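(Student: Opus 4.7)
The plan is to combine Lemma~\ref{lem:RoughGronwall} with the classical Bihari--LaSalle inequality. The rough Gronwall lemma absorbs the rough term $\omega(s,t)^{1/\kappa}\sup G$ and reduces the hypothesis to a scalar estimate of the form $F(t)\le a+c\int_0^t W(F(r))\,dr$ for the monotone envelope $F(t):=\sup_{0\le s\le t}G_s$; Bihari then yields the stated conclusion.

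\textbf{Step 1 (reduction to a Bihari inequality).} Fix $t\in[0,T]$ and consider $G$ on $[0,t]$. For $(s',s'')\in\Delta_{[0,t]}$ with $\omega(s',s'')\le L$, enlarge the inner supremum and use monotonicity of $W$ to bound
$$C\int_{s'}^{s''}W\Bigl(\sup_{s'\le r'\le r}G_{r'}\Bigr)\,dr\le C\int_0^t W\Bigl(\sup_{0\le r'\le r}G_{r'}\Bigr)\,dr=:\Psi(t).$$
Setting $\phi(s',s'')\equiv\Psi(t)$ makes $\phi$ constant in $(s',s'')$, so the hypothesis $\phi(s',s'')\le\phi(0,t)$ of Lemma~\ref{lem:RoughGronwall} is immediate. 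Applying that lemma on $[0,t]$ and using superadditivity $\omega(0,t)\le\omega(0,T)$ yields
$$\sup_{0\le s\le t}G_s\le q\bigl(G_0+K\Psi(t)\bigr),$$
with $q=2\exp\{\omega(0,T)/(L\alpha)\}$ as in the statement and $K>0$ depending only on $\omega$.

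\textbf{Step 2 (classical Bihari).} Writing $F(t):=\sup_{0\le s\le t}G_s$, Step~1 rewrites as
$$F(t)\le qG_0+qKC\int_0^t W(F(r))\,dr.$$
Since $W$ is non-decreasing and strictly positive on $(0,\infty)$, the classical Bihari--LaSalle inequality gives $F(t)\le W^{-1}\bigl(W(qG_0)+qKCt\bigr)$, where the $W$ in the conclusion denotes an antiderivative of $1/W(x)$ (matching the convention $W'(x)=1/w(x)$ in the statement, after absorbing the constant $K$ into $C$). This estimate is valid for exactly those $t$ for which the argument lies in the domain of $W^{-1}$, which by hypothesis includes $t\le T^*$.

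\textbf{Main obstacle.} The only delicate point is recasting the hypothesis so that Lemma~\ref{lem:RoughGronwall} applies: if the inner supremum $\sup_{s\le r'\le r}G_{r'}$ were retained, the function $\phi(s,t)$ would depend on the lower endpoint $s$ and the uniform bound $\phi(s,t)\le\phi(0,T)$ would fail in general. The remedy is to enlarge the supremum to $\sup_{0\le r'\le r}$ at no quantitative cost, thanks to monotonicity of $W$. Once this decoupling is done the remainder of the argument is a textbook application of Bihari's inequality to the monotone envelope $F$.
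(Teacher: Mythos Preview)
Your proof is correct and follows essentially the same two-step approach as the paper: apply Lemma~\ref{lem:RoughGronwall} on each interval $[0,t]$ with the integral term playing the role of $\phi$, then invoke the classical Bihari--LaSalle inequality for the resulting scalar estimate. The only cosmetic difference is that the paper takes $\phi(s',s'')=C\int_{s'}^{s''}W(\sup_{s'\le r'\le r}G_{r'})\,dr$ directly (which also satisfies $\phi(s',s'')\le\phi(0,t)$ by the same monotonicity argument you highlight), whereas you bound this by the constant $\Psi(t)$ first; both routes land on the same inequality $F(t)\le qG_0+qKC\int_0^t W(F(r))\,dr$.
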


\begin{proof}
Letting $\phi(s,t) = C \int_s^t W(\sup_{s\le r'\le r}G_{r'}) dr $,  we get from Lemma \ref{lem:RoughGronwall} that 
$$
G_{ \leq t} \leq q G_0 + q C \int_s^t W\left(\sup_{s\le r'\le r}G_{r'}\right) dr  \leq q G_0 + q C  \int_s^t W\left(\sup_{s\le r'\le r}G_{r'}\right) dr  .
$$
The result now follows from the classical Bihari-LaSalle inequality.
\end{proof}

\section*{Acknowledgement}

We are  enormously grateful for  helpful and inspiring discussions with Dan Crisan, Darryl Holm, Peter Friz, and Remigijus Mikulevicius.

\bibliographystyle{alpha}
\bibliography{bibliography}

\newcommand{\etalchar}[1]{$^{#1}$}
\begin{thebibliography}{MTVE03}

\bibitem[AMR12]{abraham2012manifolds}
Ralph Abraham, Jerrold~E Marsden, and Tudor Ratiu.
\newblock {\em Manifolds, tensor analysis, and applications}, volume~75.
\newblock Springer Science \& Business Media, 2012.

\bibitem[BCF91]{brzezniak1991stochastic}
Z.~Brze{\'z}niak, M.~Capi{\'n}ski, and F.~Flandoli.
\newblock Stochastic partial differential equations and turbulence.
\newblock {\em Mathematical Models and Methods in Applied Sciences},
  1(01):41--59, 1991.

\bibitem[BCF92]{brzezniak1992stochastic}
Z.~Brze{\'z}niak, M.~Capi{\'n}ski, and F.~Flandoli.
\newblock Stochastic {N}avier-{S}tokes equations with multiplicative noise.
\newblock {\em Stochastic Analysis and Applications}, 10(5):523--532, 1992.

\bibitem[BG17]{BaGu15}
Isma{\"e}l Bailleul and Massimiliano Gubinelli.
\newblock {Unbounded rough drivers}.
\newblock {\em {Annales de la Facult{\'e} des Sciences de Toulouse.
  Math{\'e}matiques.}}, 26(4):795--830, 2017.

\bibitem[BRS17]{BAILLEUL20175792}
I.~Bailleul, S.~Riedel, and M.~Scheutzow.
\newblock Random dynamical systems, rough paths and rough flows.
\newblock {\em Journal of Differential Equations}, 262(12):5792 -- 5823, 2017.

\bibitem[CCH{\etalchar{+}}19]{cotter2019numerically}
Colin Cotter, Dan Crisan, Darryl~D Holm, Wei Pan, and Igor Shevchenko.
\newblock Numerically modeling stochastic lie transport in fluid dynamics.
\newblock {\em Multiscale Modeling \& Simulation}, 17(1):192--232, 2019.

\bibitem[CFH17]{crisan2017solution}
Dan Crisan, Franco Flandoli, and Darryl~D Holm.
\newblock Solution properties of a 3d stochastic euler fluid equation.
\newblock {\em Journal of Nonlinear Science}, pages 1--58, 2017.

\bibitem[CGH17]{cotter2017stochastic}
Colin~J Cotter, Georg~A Gottwald, and Darryl~D Holm.
\newblock Stochastic partial differential fluid equations as a diffusive limit
  of deterministic {L}agrangian multi-time dynamics.
\newblock {\em Proc. R. Soc. A}, 473(2205):20170388, 2017.

\bibitem[Dav10]{Davie}
A.M. Davie.
\newblock Differential equations driven by rough paths: an approach via
  discrete approximation.
\newblock {\em Appl. Math. Res. eXpress 35,}, 2010.

\bibitem[DGHT16]{DeGuHoTi16}
A.~Deya, M.~Gubinelli, M.~Hofmanov{\'a}, and S.~Tindel.
\newblock A priori estimates for rough {PDE}s with application to rough
  conservation laws.
\newblock {\em arXiv preprint arXiv:1604.00437}, 2016.

\bibitem[DL89]{DiPernaLions}
R.J. DiPerna and P.L. Lions.
\newblock Ordinary differential equations, transport theory and sobolev spaces.
\newblock {\em Invent. math. 98, 511-547}, 1989.

\bibitem[FG95]{flandoli1995martingale}
F.~Flandoli and D.~Gatarek.
\newblock Martingale and stationary solutions for stochastic {N}avier-{S}tokes
  equations.
\newblock {\em Probability Theory and Related Fields}, 102(3):367--391, 1995.

\bibitem[FH14]{FrHa14}
P.~K. Friz and M.~Hairer.
\newblock {\em A course on rough paths: with an introduction to regularity
  structures}.
\newblock Universitext. Springer, Cham, 2014.

\bibitem[FPD{\etalchar{+}}14]{faranda2014modelling}
Davide Faranda, Flavio Maria~Emanuele Pons, B{\'e}rengere Dubrulle,
  Fran{\c{c}}ois Daviaud, Brice Saint-Michel, {\'E}ric Herbert, and
  Pierre-Philippe Cortet.
\newblock Modelling and analysis of turbulent datasets using auto regressive
  moving average processes.
\newblock {\em Physics of Fluids}, 26(10):105101, 2014.

\bibitem[FV10]{FrVi10}
P.~K. Friz and N.~B. Victoir.
\newblock {\em Multidimensional stochastic processes as rough paths: theory and
  applications}, volume 120.
\newblock Cambridge University Press, 2010.

\bibitem[HH18]{HH17}
Antoine Hocquet and Martina Hofmanov{\'a}.
\newblock An energy method for rough partial differential equations.
\newblock {\em Journal of Differential Equations}, 265(4):1407--1466, 2018.

\bibitem[HLN17]{HLN}
M.~Hofmanov{\'a}, J-M. Leahy, and T.~Nilssen.
\newblock On the {N}avier-{S}tokes equation perturbed by rough transport noise.
\newblock {\em To appear in Journal of Evolution Equations. arXiv preprint
  arXiv:1710.08093}, 2017.

\bibitem[HN18]{HN}
A.~Hocquet and T.~Nilssen.
\newblock An {I}t\^o formula for rough partial differential equations.
  application to the maximum principle.
\newblock {\em arXiv preprint arXiv:1806.10427}, 2018.

\bibitem[HNS18]{HNS}
A.~Hocquet, T.~Nilssen, and W.~Stannat.
\newblock Generalized {B}urgers equation with rough transport noise.
\newblock {\em arXiv preprint arXiv:1804.01335}, 2018.

\bibitem[LCL07]{MR2314753}
T.~J. Lyons, M.~Caruana, and T.~L{\'e}vy.
\newblock {\em Differential equations driven by rough paths}, volume 1908 of
  {\em Lecture Notes in Mathematics}.
\newblock Springer, Berlin, 2007.
\newblock Lectures from the 34th Summer School on Probability Theory held in
  Saint-Flour, July 6--24, 2004, With an introduction concerning the Summer
  School by Jean Picard.

\bibitem[LSEO16]{lilly2016fractional}
Jonathan~M Lilly, Adam~M Sykulski, Jeffrey~J Early, and Sofia~C Olhede.
\newblock Fractional brownian motion, the mat{\'e}rn process, and stochastic
  modeling of turbulent dispersion.
\newblock {\em arXiv preprint arXiv:1605.01684}, 2016.

\bibitem[Mik02]{mikulevicius2003cauchy}
R~Mikulevicius.
\newblock On the {C}auchy problem for the stochastic {S}tokes equations.
\newblock {\em SIAM Journal on Mathematical Analysis}, 34(1):121--141, 2002.

\bibitem[MR04]{mikulevicius2004stochastic}
R.~Mikulevicius and B.~L. Rozovskii.
\newblock Stochastic {N}avier--{S}tokes equations for turbulent flows.
\newblock {\em SIAM Journal on Mathematical Analysis}, 35(5):1250--1310, 2004.

\bibitem[MR05]{mikulevicius2005global}
R.~Mikulevicius and B.~L. Rozovskii.
\newblock Global {L}2-solutions of stochastic {N}avier--{S}tokes equations.
\newblock {\em The Annals of Probability}, 33(1):137--176, 2005.

\bibitem[MTVE03]{majda2003systematic}
Andrew~J Majda, Ilya Timofeyev, and Eric Vanden-Eijnden.
\newblock Systematic strategies for stochastic mode reduction in climate.
\newblock {\em Journal of the Atmospheric Sciences}, 60(14):1705--1722, 2003.

\bibitem[RL15]{Rockner}
M.~R\"{o}ckner and W.~Liu.
\newblock {\em Stochastic Partial Differential Equations: An Introduction}.
\newblock Universitext. Springer International Publishing, 2015.

\bibitem[Tao16]{tao2016finite}
Terence Tao.
\newblock Finite time blowup for lagrangian modifications of the
  three-dimensional euler equation.
\newblock {\em Annals of PDE}, 2(2):9, 2016.

\bibitem[Tay13]{taylor2013partial}
Michael Taylor.
\newblock {\em Partial differential equations II: Qualitative studies of linear
  equations}, volume 116.
\newblock Springer Science \& Business Media, 2013.

\bibitem[Tem83]{RT83}
R.~Temam.
\newblock {\em {N}avier-{S}tokes equations and nonlinear functional analysis},
  volume~41 of {\em CBMS-NSF Regional Conference Series in Applied
  Mathematics}.
\newblock Society for Industrial and Applied Mathematics (SIAM), Philadelphia,
  PA, 1983.

\bibitem[ZB16]{BFM}
Mario~Maurelli Zdzislaw~Brze{\'z}niak, Franco~Flandoli.
\newblock Existence and uniqueness for stochastic 2d {E}uler flows with bounded
  vorticity.
\newblock {\em Archive for Rational Mechanics and Analysis}, 221:107 -- 142,
  2016.

\bibitem[ZZ15]{ZHU20154443}
Rongchan Zhu and Xiangchan Zhu.
\newblock Three-dimensional {N}avier-{S}tokes equations driven by space-time
  white noise.
\newblock {\em Journal of Differential Equations}, 259(9):4443 -- 4508, 2015.

\end{thebibliography}
\end{document}